\newcommand{\ego}{I}
\newcommand{\me}{me}
\newcommand{\my}{my}
\newcommand{\He}{He}
\newcommand{\he}{he}
\newcommand{\him}{him}
\newcommand{\his}{his}
\renewcommand{\SS}{\mathcal S}
\newcommand{\statement}{sentence}
\newcommand{\statements}{sentences}
\newcommand{\LQ}{`}
\newcommand{\RQ}{{\textrm'}}
\newcommand{\mq}[1]{\text{`$#1$'}}
\newcommand{\List}{\text{List}}
\newcommand{\bq}{<}
\newcommand{\lv}{\lb}
\newcommand{\rv}{\rb}
\renewcommand{\N}{\textbf{\textup{N}}}
\renewcommand{\K}{\textbf{\textup{K}}}
\renewcommand{\C}{\textbf{\textup{C}}}
\newcommand{\GC}{\textbf{\textup{GC}}}
\newcommand{\ST}{\textbf{\textup{ST}}}
\newcommand{\SCT}{\textbf{\textup{SCT}}}
\newcommand{\GST}{\textbf{\textup{GST}}}
\newcommand{\GDST}{\textbf{\textup{GDST}}}
\newcommand{\ZF}{\textbf{\textup{ZF}}}
\newcommand{\ZFC}{\textbf{\textup{ZFC}}}
\newcommand{\NBG}{\textbf{\textup{NBG}}}
\newcommand{\KP}{\textbf{\textup{KP}}}
\newcommand{\GKP}{\textbf{\textup{GKP}}}
\newcommand{\KPC}{\textbf{\textup{KPC}}}
\newcommand{\NT}{\textbf{\textup{NT}}}
\newcommand{\GNT}{\textbf{\textup{GNT}}}
\newcommand{\PA}{\textbf{\textup{PA}}}
\newcommand{\EFA}{\textbf{\textup{EFA}}}
\newcommand{\PRA}{\textbf{\textup{PRA}}}
\newcommand{\elt}{\text{elt}}
\newcommand{\fst}{\text{fst}}
\newcommand{\reclang}{\NT!}
\newcommand{\lang}{L}
\newcommand{\ex}{\downarrow}
\newcommand{\philang}[1]{\phi_\lang\{#1\}}
\newcommand{\fA}[1]{f_A\{#1\}}
\newcommand{\alphaA}[1]{\alpha_A\{#1\}}
\newcommand{\double}[2]{#1\{#2\}}
\newcommand{\LNC}{\textbf{\textup{LNC}}}
\theoremstyle{theorem}
\theoremstyle{definition}
\begin{document}
\title[Naming the largest number]{Naming the largest number \\ \tiny{Exploring the boundary between mathematics and the philosophy of mathematics}}

\authordavid


\begin{abstract}
What is the largest number accessible to the human imagination? The question is neither entirely mathematical nor entirely philosophical. Mathematical formulations of the problem fall into two classes: those that fail to fully capture the spirit of the problem, and those that turn it back into a philosophical problem.
\end{abstract}
\maketitle

%

\section{Introduction}
\label{sectionintro}
\begin{quote}
``You have fifteen seconds. Using standard math notation, English words, or both, name a single whole number--not an infinity--on a blank index card. Be precise enough for any reasonable modern mathematician to determine exactly what number you've named, by consulting only your card and, if necessary, the published literature. Are you ready? Get set. Go.'' -- Scott Aaronson, ``Who can Name the Bigger Number?'' \footnote{\url{http://www.scottaaronson.com/writings/bignumbers.html}}\\~\\
``The game of describing the largest integer, when played by experts, lapses into a hopeless argument over legitimacy.'' -- Joel Spencer, ``Large numbers and unprovable theorems'' \cite{Spencer}
\end{quote}~

This paper addresses the question: what is the best strategy in a ``name the largest number'' contest such as the one described above? Of course, the answer depends on what the rules of the contest are. Scott Aaronson's idea of letting a ``reasonable modern mathematician'' judge the entries might sound fair at first, but the answer to the question of whether a given entry is a precise description of a number may depend on exactly which mathematician you ask. It turns out that this dependence is not incidental but fundamental, and slightly different standards for what counts as a valid entry give rise to radically different strategies for winning the game.

One way to make precise the notion of a ``precise description of a number'' is to fix a \emph{precise language}, i.e. a language whose syntax and semantics are both precisely defined, for describing numbers.\Footnote{\ego\ warn the reader that the phrase `precise language' is not standard terminology. While \ego\ will attempt to use standard terminology wherever possible, in some cases this is not possible; for instance, the reason \ego\ use the phrase `precise language' is that the more standard phrase `formal language' tends to refer only to syntax and not to semantics. The concept of a formal language is amenable to mathematical definition, whereas the concept of a precise language is not unless it is restricted in some way.} 
A major goal of mathematics in the nineteenth and early twentieth centuries was to create precise languages capable of expressing mathematical thought. According to one view, which I will call the standard mathematicians' view, this goal was accomplished by the introduction in the early twentieth century of the language of (first-order,\Footnote{In this paper \ego\ may seem to have an unjustified preference for first-order languages, as opposed to things like second-order languages and languages that allow plural quantification. However, any second-order language can be interpreted as a first-order language by the expedient of viewing the ``properties'' in the original language as a type of object that one is allowed to quantify over. Although statements involving plural quantification are more difficult to straightforwardly translate into first-order form, it seems to me that in all cases where the original sentence is not ambiguous, it is possible to do so, and that moreover the issues that concern philosophers regarding plural quantification have little do to with the issues considered in this paper.} classical,\Footnote{Here `classical' means ``based on classical logic'' -- in particular, every statement in a classical language is either true or false.} Zermelo--Fraenkel) \emph{set theory}, which is allegedly capable of codifying all mathematical facts. Another important language developed in the late nineteenth century is (first-order, classical, Peano) \emph{arithmetic} or \emph{number theory}. Roughly speaking, set theory is the language in which the fundamental objects of discourse are sets and the fundamental relation is membership, and number theory is the language in which the fundamental objects of discourse are the natural numbers $0,1,2,3,\ldots$ (which we will hereafter refer to simply as ``numbers'') and the fundamental operations are addition and multiplication. The syntaxes of both set theory and number theory have precise definitions, which are given in any good textbook on mathematical logic. However, it is open to philosophical debate whether the languages can be assigned precise semantics; see Sections \ref{subsectionplan}, \ref{sectionnumberrealism}, and \ref{sectionsetrealism} for further discussion. Here \ego\ will attempt to take a neutral position with respect to this debate, and merely analyze the consequences of any particular set of beliefs regarding the preciseness of mathematical languages. In what follows, \ego\ will denote the languages of set theory and number theory by the labels \ST\ and \NT, respectively.\Footnote{The languages of set theory and number theory are sometimes denoted by the labels \ZFC\ and \PA, respectively, as shorthand for `Zermelo--Fraenkel set theory with Choice' and `Peano Arithmetic'. These labels are also used to denote standard axiom systems associated with these languages. However, in order to emphasize the distinction between a language and an axiom system associated with that language, \ego\ reserve the labels \ZFC\ and \PA\ for the axiom systems and \ego\ use the alternate labels \ST\ and \NT\ for the languages. \ego\ chose new labels not based on mathematicians' names in order to suggest that the languages are somehow less ``historically contingent'' than the axiom systems, which \ego\ believe to be the case.} It is well-known that \NT\ has significantly less expressive power than \ST\ in the sense that there is an algorithm for translating \statements\ of \NT\ into \ST\ (which claims to preserve meaning or at least truth-values), but not vice-versa.


If we have in mind a precise language, then we can define a precise largest number contest by declaring that an entry to the contest is valid if and only if it is a term\Footnote{The word `term' is a technical term in mathematical logic whose closest non-technical analogue is something like ``a singular noun phrase that includes a definite article''. The key thing about terms is that each term is intended to refer to a unique object.} of the language that names a number. We can write this in semi-formal mathematical language as follows:
\begin{lncs}
\label{lnc1}
Let $\lang$ be a precise language. The \emph{largest number contest of $\lang$}, denoted $\LNC(\lang)$ is the contest in which a valid entry is a term of $\lang$ that succeeds at referring to a number, and the score of the entry is the number it refers to. (In all of our contests, the winner is the valid entry with the highest score.) We will denote the score of an entry $\phi$ by $[\phi]$.
\end{lncs}

To many modern mathematicians, the largest number contest of \ST\ will seem like the obvious ``first choice'' for a largest number contest. On the other hand, many people are uncertain as to whether set theory is really a precise language, but are more confident in number theory; the largest number contest of \NT\ may be a more natural ``first choice'' for such people.\Footnote{There is the minor technical issue that the terms of these languages have much less expressive power than the languages themselves; this can be remedied by adding new rules for constructing terms that bring the language's terms on a par with the rest of the language in terms of expressive power, without increasing the expressive power of the language as a whole. See Appendix \ref{appendixPT} for details.} Each of these contests has the disadvantage that it is not possible to determine whether any given entry is valid in an algorithmic way. In fact, due to Tarski's theorem on the undefinability of truth (see Section \ref{sectionphilosophical}) it is not even possible to formalize the notion of a ``valid entry'' in the precise language being used for the contest (namely \ST\ or \NT). Despite this lack of formalizability, it appears that the question of which entries are valid is still legitimate and precise, assuming that the language used to define the contest has in fact been given precise semantics.\Footnote{Some mathematicians trained under the set theory framework may object that any question that cannot be translated into \ST\ should be suspected of being imprecise. However, this view is subject to the counterargument that any reason to believe that \ST\ is a precise language is also a reason to believe that the question of which entries in \LNC(\ST) are valid is precise; on the other hand, if \ST\ is not precise, then it is not clear why one would use it as a standard of precision. Some further discussion may be found in Section \ref{sectionsetrealism}.} So it makes sense to ask what the best strategy to use is, and in fact it is not too hard to find a strategy for the contest which is close to being optimal. This strategy is based on the following theorem:\Footnote{The proofs of all theorems and corollaries, as well as the explanation of the axiom system they are understood to be theorems of, are given in Appendices \ref{appendixPT}-\ref{appendixproofs}, where more general versions of the theorems are also stated.}

\begin{theorem}[Cf. Theorem \ref{theoremrecstratB}]
\label{theoremrecstrat}
Let $\lang$ be either set theory or number theory. Then for each number $n$, the phrase `the largest number that can be represented in $\lang$ by a term consisting of $n$ symbols or fewer'\Footnote{Cf. \cite[Figs. 9-10]{Spencer}, \url{http://web.mit.edu/arayo/www/bignums.html}} can be translated into $\lang$ as a term consisting of $O(n)$ symbols.\Footnote{Recall that $O(n)$ is shorthand for `at most a constant times $n$'.} This term, which we will denote by $\philang{n}$, can be written explicitly.
\end{theorem}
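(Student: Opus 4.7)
The plan is to construct $\philang{n}$ in three stages: first encode $n$ as a term $\overline n$ of $\lang$ of length $O(n)$, then build a formula serving as a partial satisfaction predicate for terms of length $\le n$, and finally wrap the whole thing in a definite description.

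The first stage is routine: the numeral $SS\cdots S0$ in \NT, and an analogous short encoding of the von Neumann ordinal $n$ in \ST, each yield $\overline n$ of length $O(n)$. The heart of the argument, and the expected main obstacle, is the second stage. After arithmetizing the syntax of $\lang$ in the standard way, I would exhibit, uniformly in $n$, a formula $\mathrm{Sat}(v,t,k)$ whose schematic part has size depending only on $\lang$, and in which only the parameter $v$ is designed to grow; we will instantiate $v$ by $\overline n$. The intended reading is: for every $t$ coding a term of $\lang$ of length at most $v$ and every number $k$, $\mathrm{Sat}(v,t,k)$ holds iff $t$ denotes $k$. A full truth predicate is ruled out by Tarski, but a partial predicate, correct up to a length parameter, is definable by quantifying over finite ``evaluation traces'' that simultaneously record the value of every subterm; the parameter $v$ gives a concrete bound on the size of such a trace, which is all that is needed. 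The total length of $\mathrm{Sat}(\overline n,\cdot,\cdot)$ is then $O(n)$, since the template costs a constant and $\overline n$ costs $O(n)$.

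In the third stage, I would set
\[
\philang{n} := \iota k.\,\Bigl(\exists t\,\mathrm{Sat}(\overline n, t, k) \;\wedge\; \forall t'\,\forall k'\,(\mathrm{Sat}(\overline n, t', k') \to k' \le k)\Bigr),
\]
interpreted via the definite-description apparatus of Appendix \ref{appendixPT}, with $0$ as a fallback so that $\philang{n}$ is always well-defined. This adds only a constant number of symbols beyond $\overline n$ and $\mathrm{Sat}$, so $|\philang{n}| = O(n)$, and by the correctness of $\mathrm{Sat}$ it denotes the claimed maximum.

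Beyond the construction of $\mathrm{Sat}$ itself, the subtle points to verify are that the arithmetization really accommodates all term-formation rules of $\lang$ (including, for \ST, quantifiers nested inside definite descriptions, per Appendix \ref{appendixPT}); that the quantification over evaluation traces does not smuggle an implicit dependence on $n$ into the schematic part of the template; and that Berry's paradox does not apply here. The last point is worth checking explicitly: since the encoding of $\overline n$ already takes $\Omega(n)$ symbols, the implicit constant in $|\philang{n}| = O(n)$ is at least $1$ with a positive additive overhead, so $|\philang{n}| > n$ for all $n$, and $\philang{n}$ itself falls outside the range of terms that $\mathrm{Sat}$ is promised to adjudicate — the diagonal step of the paradox cannot be carried out.
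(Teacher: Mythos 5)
There is a genuine gap at the heart of the second stage: the formula $\mathrm{Sat}(v,t,k)$ you posit --- a single formula whose schematic part has size independent of $n$, and which is correct for \emph{every} term $t$ of length at most $v$ once $v$ is instantiated --- cannot exist. The terms of $\lang$ in this theorem are not the bare function-symbol terms of the predicate calculus; by the construction of Appendix \ref{appendixPT} they include quantifier-like operators such as `$\elt\{x:\Phi(x)\}$' where $\Phi$ may contain arbitrarily many nested \emph{unbounded} quantifiers, precisely so that terms are on a par with the full language in expressive power. Consequently the value of a term of length $\leq v$ is not determined by any finite ``evaluation trace'': no finite object witnesses the truth of `$\forall y\,\exists z\;\psi(x,y,z)$' over an infinite domain, so the trace mechanism only handles quantifier-free (or bounded-quantifier) content. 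Worse, if your $\mathrm{Sat}$ existed, then $\mathrm{Sat}(|t|,t,k)$ (with $|t|$ the definable length function) would be a full denotation predicate for terms, hence --- since every sentence $\sigma$ of $\lang$ can be packaged as the term `$\elt\{x: x=1 \wedge \sigma\}$' --- a truth predicate for $\lang$ inside $\lang$, contradicting Tarski's theorem. Your Berry's-paradox check is a symptom of this: the construction is ``saved'' only by your deliberate choice of a unary numeral $\overline n$; replacing it with the binary numeral available in $\NT$ (which has exponentiation) would give $|\philang{n}|=O(\log n)$ and an outright contradiction via `$\philang{n}+1$'. A sound proof cannot have the entire $n$-dependence sitting in a numeral parameter; it must sit in the logical structure of the formula.

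That is exactly what the paper's proof of Theorem \ref{theoremrecstratB} does differently. Each term $t$ with $|t|\leq n$ is first rewritten as `$\elt\{x:\Phi_t(x)\}$' with $\Phi_t$ in the prenex form `$\forall y_1\,\exists z_1\cdots\forall y_n\,\exists z_n\;\phi_{t,n}(x,\yy,\zz)$', where $\phi_{t,n}$ is quantifier-free and the map $(t,n)\mapsto\phi_{t,n}$ is computable, hence definable in $\lang$. The definable satisfaction notion (the ``verification'' of Definition \ref{definitionQFF}) is applied \emph{only} to the quantifier-free matrix $\phi_{t,n}$ --- which is where your trace idea is legitimate --- while the $n$ quantifier alternations are written out explicitly as $2n$ genuine object-language quantifiers in front of it. This explicit prefix is what does the semantic work your constant-size template cannot do, and it is also the true source of the $O(n)$ bound in the statement. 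The final step, taking `$\max\{\mathrm{Val}_n(t): t\in\lang,\ |t|\leq n,\ \lb\mathrm{Val}_n(t)\in\N\rb\}$' via the $\max$ operator of Definition \ref{definitionmax}, matches your third stage in spirit.
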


Our strategy for the largest number contest of $\lang$ can now be given as follows (assuming that $\lang$ is precise): first compute the largest number $n$ such that the term $\philang{n}$ can fit onto the ``index card'' Scott Aaronson gives us to write our entry on (small handwriting will come in handy here), and then write $\philang{n}$ on the card.

This strategy, which we will call the \emph{brute force strategy} of $\lang$, is not necessarily optimal, but it is \emph{nearly optimal} in the sense that no strategy can beat it too badly. To be precise, by the definition of $\philang{n}$, any entry that scores higher than $\philang{n}$ necessarily uses at least $n+1$ symbols to do so, which is not too much less than the $O(n)$ symbols needed by the brute force strategy. So if there is a better strategy, the difference is not overwhelming in the sense that if one player plays with the brute force strategy and another player plays with a better strategy, who wins may be determined more by who has more resources to express their answer (for example, small handwriting) than it is by who is using the better strategy. (This is the reason for the name ``brute force strategy''.) The question of who wins also starts to depend on the specific details of the formal definition of $\lang$; for example, augmenting \ST\ with the primitive ability to write `$x\notin y$' (``the set $x$ is not a member of the set $y$'') as shorthand for `$\neg x\in y$' (``it is not true that $x$ is a member of $y$''), thus saving a symbol, may make a crucial difference to the outcome. \ego\ will take the point of view that any improvement that depends on such details is not a significant improvement, and thus \ego\ will consider the ``near-optimality'' possessed by the brute force strategy to be essentially the same as optimality. (This is a standard practice in many areas of mathematics and computer science.)

An obvious objection is that the largest number contest of $\lang$, though precise, does not stay true to the ``spirit'' of being a ``name the largest number'' contest. For example, a description of the term $\philang{n}$ in terms of $n$ could perhaps be made to fit on an index card,\Footnote{If this paper counts as ``published literature'' then one could simply write `$\philang{10^{100}}$' together with the appropriate citation.} and then one could simply write `and my number is the number denoted by the term $\philang{10^{100}}$'.\Footnote{Alternatively, one could simply write the original English phrase `the largest number that can be represented in $\lang$ by a term consisting of $10^{100}$ symbols or fewer' on the index card, as is done for example in \cite{Spencer}. However, the point is that the fact that Theorem \ref{theoremrecstrat} specifies a procedure for (eventually) translating this phrase into $\lang$ might be thought of as adding some ``legitimacy'' to the indirect strategy, as described in the next paragraph.} We will call this the \emph{indirect strategy}, and we note that the entry it produces is not a valid entry to $\LNC(\lang)$, since it is an English description of a number rather than a term of the language $\lang$. Since $10^{100}$ is much larger than the number of symbols that any physically realistic entry could have, in a largest number contest in which the entry produced by the indirect strategy is considered valid, it wins against any entry which is valid in $\LNC(\lang)$. However, it seems that the indirect strategy should be allowed if we follow Aaronson's principle that a player only needs to submit an entry that is ``precise enough for any reasonable modern mathematician to determine exactly what number [the player has] named'' (assuming, of course, that $\lang$ is a precise language to begin with).

The objection becomes more poignant if we consider the fact that the hypothetical entry of the previous paragraph is not only perfectly precise, but also has the property that it can be converted into a valid entry to $\LNC(\lang)$ given a sufficient (albeit physically unrealistically large) amount of time and effort. Concentrating on this part of the objection, we can imagine a second contest in which the entries are not required to be terms of the language $\lang$, but are allowed to be merely \emph{descriptions} of terms. The crucial question now becomes what sorts of descriptions of these terms are allowed. One possibility would be to allow these descriptions to be themselves terms of the language $\lang$, or perhaps terms of a different precise language that has the ability to talk about strings of symbols.\Footnote{It is well-known that the theory of strings of symbols can be translated into both \ST\ and \NT.} However, it is not hard to see that the ``brute force strategy'' described above can be trivially modified to deal with this new game: instead of writing $\philang{n}$, write `$\philang{m}$, where $m = \cl{\philang{n}}$'\Footnote{Here $\cl{\philang{n}}$ means to insert the numeral referring to the number $\philang{n}$ into the string.} for an appropriate value of $n$. Things get more interesting if we look at other ways of describing strings. For now at least, let us take the conservative viewpoint that in order to count for the purposes of the contest, a description of a string should contain enough information that it is possible to use it to algorithmically generate the string it describes, although this may take an arbitrarily long time. Semi-formally:
\begin{lncs}
\label{lnc1.1}
Let $\lang$ be a precise language. The \emph{largest number contest of $\lang$ allowing indirect entries}, denoted $\LNC^*(\lang)$, is the contest in which a valid entry is an algorithm (written in some fixed programming language the details of which are irrelevant\Footnote{Note that a programming language is a particular type of precise language: a language for precisely specifying mathematical algorithms. In this paper a \emph{program} is understood to be a description of an algorithm in a programming language.}) that succeeds at (eventually) computing a term of $\lang$ that refers to a number. If the algorithm fails to terminate by returning a string as its output, or if the string it returns is not a term of $\lang$ referring to a number, then the entry is considered invalid. The score of a valid entry is the number that its output refers to.
\end{lncs}
Taking our cue from the strategy that ``won'' $\LNC(\lang)$, a natural strategy for $\LNC^*(\lang)$ is to first find an algorithm that computes a very large number $n$, and then to combine it with a second algorithm that computes the string $\philang{n}$ given $n$ as input. However, this leaves open the question of how to write an algorithm that computes a very large number. It is worth listing this as a separate largest number contest:
\begin{lnc}
\label{lnc2}
The \emph{Busy Beaver contest}\Footnote{The Busy Beaver contest is inspired by the \emph{Busy Beaver function}. Given a number $n$ as input, the Busy Beaver function returns the largest number that is the output of some algorithm that can be written in at most $n$ symbols (in some fixed programming language). Equivalently, the value of the Busy Beaver function at $n$ is the largest number computed by any possible entry to the Busy Beaver contest written in at most $n$ symbols. (Technical note: The classical Busy Beaver function is defined not in terms of the number of symbols in a description of an algorithm, but rather in terms of the \emph{number of states} in a (one-tape, two-symbol) \emph{Turing machine representation} of the algorithm. However, the length of a program encoding an algorithm is a much more natural metric of the algorithm's complexity than the number of states in a Turing machine representation, and \ego\ have no qualms with saying that the ``true'' Busy Beaver function is the one described above.)} is the contest in which a valid entry is an algorithm that succeeds at computing a number, and the score of the entry is its output.
\end{lnc}

It is clear that any strategy for the Busy Beaver contest whose score is $n$ can be converted into a strategy for $\LNC^*(\lang)$ whose score is $[\philang{n}]$, and that if a strategy is optimal or nearly optimal for the Busy Beaver contest, then its converted version is nearly optimal for $\LNC^*(\lang)$. Thus, we can avoid dealing with $\LNC^*(\lang)$ directly and instead analyze the Busy Beaver contest in its place.

\begin{remark*}
Any strategy for the Busy Beaver contest can also be converted into a strategy for $\LNC^*(\NT)$ or $\LNC^*(\ST)$ with the same score as the original strategy, via the fact that phrases of the form `the output of the algorithm $\alpha$, if it exists' can be translated into \NT\ and \ST. However, \NT\ and \ST\ are capable of naming much larger numbers, and scores in the Busy Beaver contest will in general be much lower than scores in the largest number contests of \NT\ and \ST.
\end{remark*}


It turns out that the best strategy for the Busy Beaver contest is very different from the best strategy for $\LNC(\ST)$ or $\LNC(\NT)$. To describe this strategy, we need the notion of an \emph{axiom system} for a precise language. An axiom system $A$ for a precise language $\lang$, also known as an \emph{axiomatization} of $\lang$, is a list of \emph{axioms} and \emph{inference rules}. An axiom is a \statement\ of $\lang$ which is either considered obviously true, or merely hypothesized to be true,\Footnote{A major difference between the mathematical and philosophical communities is that mathematicians generally view axioms as ``hypotheses'' while philosophers usually view axioms as ``obvious or self-evident truths''. Furthermore, while some mathematical hypotheses are considered likely to be true, others are introduced purely for the sake of argument.} while an inference rule is a syntactic rule for creating new \statements\ of $\lang$ from previous ones, with the understanding that the inference rule is supposed to formalize some method of reasoning, in the sense that the previous \statements\ represent ``prior beliefs'' and the new \statements\ represent ``deductions'' or ``conclusions formed using the reasoning method''. A list of claims of $\lang$ is said to be a \emph{syntactic proof in $A$} if each claim on the list is either an axiom or can be created from the previous ones using some inference rule. We use the phrase `syntactic proof' to distinguish between a syntactic proof and a \emph{semantic proof} of a claim, which is an argument that convinces us, or would convince us, that the claim is true. An axiom system aspires for its syntactic proofs to be also semantic proofs; when this is the case we call the axiom system \emph{valid}. The standard axiomatizations of \ST\ and \NT\ are the Zermelo--Fraenkel axiom system for set theory and the Peano axiom system for number theory, which we will denote by \ZFC\ and \PA, respectively (for ``Zermelo--Fraenkel set theory with Choice'' and ``Peano Arithmetic'', respectively).

In what follows we consider a pair $(\lang,A)$ where $\lang$ is a precise language and $A$ is a valid axiom system for $\lang$. In order to prove results about $(\lang,A)$ we must make some additional assumptions about the structure of $\lang$; specifically, we will assume that $\lang$ is a \emph{proposition/term language} as defined in Appendix \ref{appendixPT}. Note that both \ST\ and \NT\ are proposition/term languages in this sense.


\begin{theorem}[Cf. Theorem \ref{theoremaxiomstratB}]
\label{theoremaxiomstrat}
Let $A$ be a valid axiomatization of a precise proposition/term language $\lang$ such that all \statements\ of the form `the algorithm $\alpha$ eventually halts', where $\alpha$ is a concrete algorithm, can be translated into $\lang$. Then for each $n$, there is a program of length $O(\log(n))$ encoding an algorithm that succeeds at computing $\fA{n}$, the largest number which is the output of an algorithm that can be proven to halt via a proof in $A$ of length at most $n$. This program, which we will denote by $\alphaA{n}$, can be written explicitly.
\end{theorem}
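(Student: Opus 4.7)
The plan is to build $\alphaA{n}$ so that its only $n$-dependent ingredient is the literal constant $n$ appearing in its source code, while its control flow is fixed. Concretely, $\alphaA{n}$ hardcodes $n$, enumerates all strings $\pi$ of length at most $n$ over the alphabet of $\lang$, uses the (by definition decidable) proof-checker of $A$ to test whether $\pi$ is a syntactic proof in $A$ whose conclusion is the translation into $\lang$ of a statement of the form `the algorithm $\alpha$ eventually halts' for some concrete algorithm $\alpha$, and whenever such a proof is found, extracts $\alpha$, runs it via a fixed universal simulator, and, if the simulation returns a number, updates a running maximum. The final output is that maximum.

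For the length bound, every part of the control flow --- the string enumerator, the proof-checker for $A$, the parser that recognizes the canonical form of the translation of `$\alpha$ halts' and extracts $\alpha$ from it, the universal simulator, and the max-updater --- has a description whose size does not depend on $n$. The only $n$-dependent piece of source code is the literal $n$ itself, which occupies $O(\log n)$ symbols in any standard encoding (binary, decimal, etc.). Hence $\alphaA{n}$ has length $O(\log n)$.

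For correctness and termination, the subtle point is that each invocation of the universal simulator must eventually return. This is exactly where validity of $A$ is used: since every syntactic proof in $A$ is also a semantic proof, any $A$-proof of `$\alpha$ halts' guarantees that $\alpha$ really does halt, so the simulation terminates. Because there are only finitely many strings of length at most $n$, the outer loop also terminates, and by construction the output equals $\fA{n}$ (the assumption that halting \statements\ translate into $\lang$ ensures that every algorithm witnessing the supremum actually appears in our enumeration).

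The main obstacle is the bookkeeping in the step that recognizes conclusions of the syntactic form `$\alpha$ halts' and extracts the underlying algorithm $\alpha$. This is where the hypothesis that halting \statements\ can be translated into $\lang$, together with the proposition/term structure developed in Appendix \ref{appendixPT}, is essential: one needs the translation scheme to be parseable and invertible by a fixed, $n$-independent algorithm, so that the recognition and extraction steps contribute only a constant to the source length. Once such a parsing procedure is in place, all the other ingredients are standard, and the resulting program can be written out explicitly, completing the proof.
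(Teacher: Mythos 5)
Your proposal is correct and follows essentially the same route as the paper's proof of Theorem \ref{theoremaxiomstratB}: enumerate all strings of length at most $n$, check each for being an $A$-proof that some algorithm halts, run each such algorithm and return the maximum output, with the $O(\log(n))$ bound coming from the fact that only the numeral for $n$ depends on $n$, and with validity of $A$ (the paper invokes \eqref{provabletrue}) guaranteeing that every simulated run terminates. Your additional remarks about the parseability of the translation scheme are a reasonable elaboration of details the paper leaves implicit.
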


\begin{remark*}
Note that the length of the program $\alphaA{n}$ is only $O(\log(n))$ rather than $O(n)$. The reason for this is that $\alphaA{n}$ only needs to describe $n$ in sufficient detail that it can be computed (for example by giving its binary or decimal representation), rather than needing to contain $n$ symbols to start with.
\end{remark*}

Our strategy for the Busy Beaver contest can now be given succinctly as follows: submit the program $\alphaA{10^{100}}$. This is feasible because a program of length $O(\log(10^{100}))$ is ordinarily short enough that it can be physically implemented.\Footnote{Of course, this depends on the constant implied by the $O(\cdot)$ notation, but it appears not to be too large.} Although theoretically one may submit the program $\alphaA{n}$ for much larger values of $n$, it turns out that it is not worth it to do so because gains from this sort of change are overwhelmed by much larger easy improvements elsewhere, such as replacing $(\lang,A)$ by the metasystem $(\lang+1,A+1)$ described below.

Unlike the brute force strategy, the above strategy, which we will call the \emph{axiomatic strategy} of $A$, is not nearly optimal. In fact, there is no humanly understandable nearly optimal strategy, since any nearly optimal entry will appear to us as a nearly random sequence of symbols; see Appendix \ref{appendixmoreBBC} for details. However, if the language $\lang$ and the axiom system $A$ are chosen wisely, the strategy can have a different type of optimality, namely \emph{relative epistemological optimality}. The idea is that even though there may be many possible valid entries that would win against the axiomatic strategy if they were submitted as competitors, humanity may never be able to determine which of these entries are in fact valid. So if you are playing against human opponents that prefer to only submit entries that they believe are valid, then with the axiomatic strategy you may have a reasonable shot at winning.

Suppose that Alice and Bob are human players, and that Alice has just submitted an entry $\alphaA{10^{100}}$ using the axiomatic strategy, while Bob is going to submit an entry $\alpha_0$ that he believes is valid. Presumably, this means that Bob has executed (either internally or externally\Footnote{An example of external execution of reasoning is a computer-assisted proof. Alternatively, we could say that a human who comes to believe the conclusion of a computer-assisted proof is applying internal reasoning using the belief that the external world (and in particular the computer performing the calculations used in the proof) operates according to the laws of physics. However, according to this second way of looking at things the chain of reasoning cannot properly be described as mathematical or strictly deductive. Thus the second way of looking at things makes the situation more difficult to analyze, which is why \ego\ stick to considering a computed-assisted proof as external reasoning.}) some chain of reasoning whose conclusion is that $\alpha_0$ is valid. Although it is theoretically possible that this chain of reasoning cannot be formalized in any axiom system, it seems plausible that in practice it usually can be, and that in fact there is usually an axiom system capable of formalizing Bob's reasoning which is not too much more powerful than an axiom system that Bob explicitly endorses. Moreover, arguments that can be formalized in one axiom system also tend to be able to be formalized in more powerful axiom systems, so as long as the axiom system that Bob's reasoning can be formalized in is less powerful than Alice's preselected axiom system $A$, then Bob's reasoning can probably also be formalized in $A$ as well. Finally, if we formalize Bob's reasoning in the axiom system $A$, then we may expect that the formalization of Bob's reasoning will not be too much longer than a formalization 
of his reasoning in English (or another natural language); all that is needed is that the ratio between the two lengths will not be more than (for example) $10^{50}$. On the other hand, due to physical limitations that apply regardless of whether Bob's reasoning is internal or external, 
it is reasonable to expect that we can formalize his reasoning in English using no more than (for example) $10^{50}$ symbols, and thus that we can formalize his reasoning in the axiom system $A$ using no more than $10^{100}$ symbols. If this turns out to be the case, then $\alpha_0$ can be proven to halt in $A$ using reasoning encoded in no more than $10^{100}$ symbols. Thus the output of $\alpha_0$ is at most $\fA{10^{100}}$, which is the output of Alice's entry $\alphaA{10^{100}}$. So Alice will win.\Footnote{The philosophical argument given in this paragraph is significant not for its rigor, which is practically non-existent, but because it seems to mesh well with what we expect. It seems to \me\ that if largest number contests were actually held, many of them would work in exactly this way.} \Footnote{It is also interesting to ask what might happen if Bob bases his beliefs on probabilistic or heuristic reasoning. For example, many of the arguments for accepting the validity of strong axiomatizations of set theory (i.e. large cardinal axioms) are based on such reasoning. Recently there has been much interest in the difficult question of how to formalize probabilistic reasoning about mathematical concepts (see e.g. \cite{GBCST} and the references therein), but it is not clear to me that any of the frameworks proposed so far are capable of modeling (for example) the standard heuristic arguments in favor of measurable cardinals (see \cite{Maddy1,Maddy2}). Thus, if Bob bases his belief that his entry is valid on probabilistic reasoning, it may be necessary to find an ``intermediate axiom'' which Bob would also accept on the basis of the same probabilistic reasoning, which implies that Bob's entry is valid (in the axiomatic framework Bob already accepts), and which is a consequence of the axiom system $A$ chosen by Alice, in order for the argument to apply.}

In order to make the analysis fair, \ego\ should also address the question of why Alice might believe that her entry is valid. Its validity cannot be proven in the axiom system $A$ in a physically realistic timeframe, since if it could then Alice would win if she played against herself, which makes no sense. However, it turns out that there is a precise language and corresponding axiom system not too much more powerful than the pair $(\lang,A)$ capable of formalizing the reasoning used in the proof of Theorem \ref{theoremaxiomstrat}: namely, the pair $(\lang+1,A+1)$, where
\begin{itemize}
\item $\lang+1$ is the \emph{metalanguage} of $\lang$, a precise proposition/term language capable of expressing the notions of truth and reference with respect to $\lang$ in the sense that claims of the form `$\phi$ is a true \statement\ of $\lang$' and terms of the form `the object denoted by the $\lang$-term $t$' can be straightforwardly translated into $\lang+1$; and
\item $A+1$ is the \emph{metasystem} of $A$, a natural axiomatization of $\lang+1$ which formalizes the assumption that the axiom system $A$ is valid as well as standard beliefs and methods of reasoning about strings of symbols.
\end{itemize}
(See Appendices \ref{appendixPT} and \ref{appendixaxiomatizations} for more details.) Thus, Alice may come to believe that her entry is valid using reasoning that can be formalized in the axiom system $A+1$.

In what follows, we will metonymically refer to a pair $(\lang,A)$, where $\lang$ is a precise language and $A$ is an axiomatization of it, as an axiom system, and to the pair $(\lang+1,A+1)$ as the metasystem of $(\lang,A)$.

\begin{corollary}[Cf. Corollary \ref{corollaryaxiomstratB}]
\label{corollaryaxiomstrat}
Let $(L_1,A_1)$ and $(L_2,A_2)$ be valid axiom systems with the translation property hypothesized in Theorem \ref{theoremaxiomstrat}, and suppose that $(L_1,A_1)$ is at least as powerful as the metasystem $(L_2+1,A_2+1)$ in the sense that all proofs of $(L_2+1,A_2+1)$ can be translated into $(L_1,A_1)$. Then in the Busy Beaver contest, the axiomatic strategy of $(L_1,A_1)$ wins agains the axiomatic strategy of $(L_2,A_2)$ (assuming both axiom systems as well as the algorithm for translating one into the other are small enough that they could be physically implemented).
\end{corollary}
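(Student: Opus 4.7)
The plan is to show that Alice's output $f_{A_1}\{10^{100}\}$ is strictly larger than Bob's output $f_{A_2}\{10^{100}\}$. By the definition of $f_{A_1}\{10^{100}\}$ as the largest output of an algorithm whose halting admits an $A_1$-proof of length at most $10^{100}$, it suffices to exhibit some algorithm whose output equals $f_{A_2}\{10^{100}\} + 1$ and whose halting has such a short proof in $A_1$.

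First I would observe that the metasystem $(L_2+1, A_2+1)$ is, by the discussion preceding the corollary, designed precisely so as to formalize the reasoning used in the proof of Theorem \ref{theoremaxiomstrat} applied to $(L_2, A_2)$. In particular, $A_2+1$ proves the \statement\ ``$\alpha_{A_2}\{n\}$ halts'' uniformly in $n$, with a proof whose length, as a function of the numerical value of $n$, grows only as $C + O(\log n)$ (the $O(\log n)$ term being the cost of writing the numeral $n$, and $C$ being a fixed constant depending on $A_2$ and its metasystem). For the specific value $n = 10^{100}$, this length is tiny compared to $10^{100}$.

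Next I would invoke the translation hypothesis: every proof of $(L_2+1, A_2+1)$ can be algorithmically rewritten as a proof of $(L_1, A_1)$, and because the translation algorithm is small enough to be physically implementable, it enlarges proof length by at most a physically realistic constant factor. Thus the halting of $\alpha_{A_2}\{10^{100}\}$ admits an $A_1$-proof whose length is comfortably below $10^{100}$. Now let $\beta$ be the algorithm that runs $\alpha_{A_2}\{10^{100}\}$ and returns one plus its output; appending a few symbols to the translated proof yields an $A_1$-proof that $\beta$ halts, still of length well below $10^{100}$. Since $\beta$ outputs $f_{A_2}\{10^{100}\} + 1$, the definition of $f_{A_1}\{10^{100}\}$ forces $f_{A_1}\{10^{100}\} \geq f_{A_2}\{10^{100}\} + 1$, so Alice strictly wins.

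The main obstacle is the first step: I need to verify that the proof that $\alpha_{A_2}\{n\}$ halts, when unrolled inside the metasystem $(L_2+1, A_2+1)$, really does have length only logarithmic in $n$. This relies crucially on the metasystem having ``$A_2$ is valid'' together with enough reasoning about strings, algorithms, and bounded search built in as short axioms or rules, so that the only $n$-dependent portion of the proof is the reference to the numeral $n$ that parametrizes the bounded search over $A_2$-proofs that $\alpha_{A_2}\{n\}$ performs. Once this uniform logarithmic bound is secured, the translation step and the ``add one'' coda are essentially automatic from the hypotheses of the corollary.
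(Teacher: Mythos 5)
Your proposal is correct and follows essentially the same route as the paper's proof of Corollary \ref{corollaryaxiomstratB}: the $O(\log n)$-length proof in $A_2+1$ that $\alpha_{A_2}\{n\}$ halts, its translation into $A_1$, and the observation that the output of $\alpha_{A_2}\{10^{100}\}$ therefore appears among the terms of the maximum computed by $\alpha_{A_1}\{10^{100}\}$. Your explicit ``add one'' wrapper $\beta$ is a small refinement that secures the strict inequality which the paper leaves implicit, and your identification of the uniform logarithmic bound on the metasystem proof as the crux matches the paper exactly.
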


Consequently, the key question to ask about the contestants to the Busy Beaver contest is: what is the weakest axiom system capable of formalizing their reasoning? If Alice uses reasoning incapable of being formalized in an axiom system $A$ capable of formalizing Bob's reasoning, this may lead her to believe that the axiom system $A$ is valid, in which case she can win simply by playing its axiomatic strategy. If both players' logic is best described in terms of the same axiomatic framework, then we are led to the consideration of another type of largest number contest.

A common defect in all of the largest number contests considered above is that there is no clear way for human judges to determine whether any given entry is valid or not. Even if a term $\phi$ refers to a number (and is therefore a valid entry to the first contest), there may be no proof that $\phi$ refers to a number in any axiom system that the judges accept as valid. However, the optimal strategies for the largest number contests of \ST\ and \NT, i.e. the brute force strategies of \ST\ and \NT, are relatively uncontroversial (since the proof of Theorem \ref{theoremrecstrat} may be considered valid by the judges) so the issue does not matter too much for those contests. In the Busy Beaver contest the situation is different. Theoretically a player could run the algorithm described in \his\ entry to show that it halts, but by design this would take an extremely large amount of time. The contest may be modified by adding the requirement that \emph{contestants must submit, as an addendum to their entry, a proof that their entry is valid}. Then whatever reasoning convinces the contestants of the validity of their entry could also be shown to the judges.

This new requirement brings us back around almost full circle: like the question of what constitutes a precise description of a number, the question of what constitutes a valid proof that an entry is valid is subject to philosophical debate, unless we describe a precise criterion, such as validity in a fixed axiom system, for what counts as a proof. But if we introduce such a structure, then as before it turns out that the best strategy is a brute force strategy:

\begin{lncs}
\label{lnc3}
Let $(\lang,A)$ be as in Theorem \ref{theoremaxiomstrat}. The \emph{Busy Beaver proof contest of $(\lang,A)$} is the contest in which a valid entry is a pair of the form $(\alpha,\rho)$, where $\alpha$ is an algorithm and $\rho$ is a proof in $A$ that $\alpha$ succeeds at computing a number, and the score of the entry $(\alpha,\rho)$ is the output of $\alpha$.
\end{lncs}


\begin{theorem}[Cf. Theorem \ref{theoremrecstrat2B}]
\label{theoremrecstrat2}
Let $\lang$ be either set theory or number theory, and correspondingly let $A$ be either \ZFC\ or \PA.\Footnote{The proof of this theorem is also valid for a much broader class of languages and axiom systems; see Appendices \ref{appendixPT}-\ref{appendixproofs} for details.} Then for each $n$, the $n$th case of Theorem \ref{theoremaxiomstrat} can be (syntactically\Footnote{If the axiom system $(\lang,A)$ is valid, then this syntactic proof is also a semantic proof.}) proven in $A$ using reasoning encoded in at most $O(n^2\log(n))$ characters.\Footnote{This number is possibly somewhat sensitive to the exact definition of a ``proof in the axiom system $A$''. Though many definitions of the word ``proof'' exist in the literature, this theorem is based on the definition of ``proof'' described in Appendix \ref{appendixaxiomatizations}, which is intended to closely model the notion of ``proof'' in standard mathematical practice, including automated proof checkers.} The proof in $A$, which we will denote by $\double{\rho_A}{n}$, can be written explicitly.
\end{theorem}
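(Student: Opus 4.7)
The plan is to formalize within $A$ the semantic argument that underlies Theorem~\ref{theoremaxiomstrat} for the specific hardcoded value $n$. Recall that $\alphaA{n}$ enumerates all strings $p$ of length at most $n$, tests each for being a valid $A$-proof of a sentence asserting that some algorithm $\beta$ halts, simulates $\beta$ whenever the test succeeds, and returns the maximum output collected. The claim to verify is that this procedure halts and that its output equals $\fA{n}$. The latter assertion is essentially a definitional unfolding of $\fA{n}$, so the substantive task is to prove termination.

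Termination reduces to a bounded reflection principle: for every string $p$ with $|p|\le n$ and every algorithm $\beta$, if $p$ is a valid $A$-proof that $\beta$ halts, then $\beta$ does indeed halt. I would establish this inside $A$ by introducing a partial truth predicate $T$ for $\Sigma_1$-sentences, which $A$ can define by a formula of fixed size. The first, easy, step is to observe that $A$ proves uniformly in $\beta$ that $T$ applied to ``$\beta$ halts'' is equivalent to $\beta$ halting---this is a mere unfolding, since ``$\beta$ halts'' already asserts the existence of the very witness that $T$ recognises. The second, longer, step is bounded soundness: for every $p$ with $|p|\le n$ and every sentence $\phi$, if $p$ derives $\phi$ in $A$ then $T(\phi)$. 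This I would prove inside $A$ by induction on the proof length, verifying case by case that each inference rule of $A$ preserves truth under a partial truth hierarchy covering all formulas that can appear in proofs of length at most $n$.

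The main obstacle, and the source of the $O(n^2\log n)$ bound, is accounting for the length of this formalization. Hardcoding the numeral $n$ already costs $O(\log n)$. The induction on proof length requires roughly $n$ inductive steps applied within $A$; at each step the formulas, proof positions, and partial truth predicates being manipulated have size $O(n)$, and numerical addressing carries an additional $O(\log n)$ overhead for writing out indices. Multiplying gives $O(n^2\log n)$, which dominates the constant-length preliminaries. The key technical subtlety is that the partial truth hierarchy must be described so that its Tarski biconditionals can be verified \emph{jointly} and uniformly in $\phi$, rather than one instance per $\phi$; otherwise the cost balloons exponentially in $n$, since there are exponentially many candidate formulas of size $\le n$. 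Once this uniform verification is in place, the remaining steps---wrapping the reflection conclusion around the enumeration loop of $\alphaA{n}$ and concluding that $\alphaA{n}$ halts with output equal to $\fA{n}$---fit inside the $O(n^2\log n)$ budget by routine bookkeeping.
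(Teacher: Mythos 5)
Your plan is correct and matches the paper's proof (Theorem \ref{theoremrecstrat2B}): reduce the claim to bounded $\Sigma_1$-reflection, build an $n$-level hierarchy of partial truth predicates whose Tarski conditions are established uniformly over all formulas at each level rather than one instance at a time, formalize the soundness induction over proofs inside $A$, and obtain the bound by noting that each of the roughly $n$ occurrences of a truth predicate costs $O(n\log(n))$ symbols once its definition is expanded in full. The paper additionally observes that allowing definitional shorthand collapses the bound to $O(n\log(n))$, which is precisely the mechanism underlying your $n^{2}$-versus-$n$ accounting.
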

The brute force strategy for the Busy Beaver proof contest of $(L,A)$ can then be given as follows: First compute the largest number $n$ such that the $\double{\rho_A}{n}$ is short enough to be submitted as an addendum to the entry $\alphaA{n}$, and then submit the pair $(\alphaA{n},\double{\rho_A}{n})$ as an entry. (Note that the length of $\double{\rho_A}{n}$ is the limiting factor here; $\alphaA{n}$ is much shorter.) Like the brute force strategies of the largest number contests of \ST\ and \NT, this strategy is nearly optimal in the sense that no other valid strategy can beat it too badly. However, in this case the near-optimality is a little bit worse in the sense that the number of symbols needed is $O(n^2\log(n))$ rather than $O(n)$. It turns out that if we consider proofs that incorporate ``shorthand'' corresponding to the common mathematical practice of giving ``definitions'', then the number of symbols needed can be reduced to $O(n\log(n))$.

Like the largest number contests of \ST\ and \NT, an objection can be raised to the various Busy Beaver proof contests on the grounds that they are not ``real'' largest number contests: for any such contest, it is easy to describe entries that ``should be'' accepted as valid since their corresponding proofs use methods of reasoning just a little bit outside of the axiomatic framework that has been fixed for the judging, which are easily seen to win against all valid entries by a large margin. However, we seem to have exhausted all our ``possible moves'' for getting away from the problem (though we will see later that this is not quite true), so perhaps it is best to just admit defeat: there is no good way to formalize the largest number contest.

To be specific, an important thesis of this paper is the following:
\begin{thesis}
\label{mainthesis}
For any precise largest number contest, the best strategy is either a brute force strategy or an axiomatic strategy.
\end{thesis}
\ego\ will not argue for this thesis other than by pointing to the lack of counterexamples despite encountering a broad range of precise largest number contests. The real test will be to see whether a counterexample will be found in the future.

In contests where the optimal strategy is a brute force strategy, there is always the feeling that the spirit of the contest has been lost since it is easy to give descriptions of numbers that are just as precise as the implicit descriptions of numbers provided by valid entries in the contest, that nevertheless win against all legitimate entries. Moreover, the fact that victory in the contest is tied to merely being able to physically write down a certain sequence of symbols, when we all know in advance exactly what those symbols will be, makes a mockery of what was supposed to be a test of wits. On the other hand, in contests where the best strategy is axiomatic, it is impossible to determine which entries are valid in an algorithmic way, and this problem is most conspicuous exactly when we look at the strategies that appear to have the best chance of winning, namely the axiomatic strategies of the most powerful axiom systems for the most expressive languages. In general the plausibility of an axiom system is inversely proportional to its power, so this leads us to the uncomfortable situation where the strategies which are the best if valid are also those which are the least likely to be valid, and the most difficult to justify.\\

{\bf Acknowledgements.} The author thanks Lance and Joseph Simmons for comments on an earlier version of this paper, and Scott Aaronson for writing about the question from which this paper arose. The author was supported in part by EPSRC Programme Grant EP/J018260/1.

\tableofcontents

\section{Philosophical largest number contests}
\label{sectionphilosophical}

One reaction to the difficulties of formalizing the largest number problem is to eschew formalization and instead consider a different, more ``philosophical'', approach to the challenge of implementing a largest number contest. Namely, we can take literally the idea of judging the entries according to the standard of whether a ``reasonable modern mathematician'' would find them to be precise. The fact that different mathematicians may give different answers to this question can be dealt with by specifying in advance who will judge the contest. Alternatively, we may divide mathematicians into broad classes based on their philosophy of mathematics, and allow each philosophical school to elect a panel of judges to represent it. Each entry could then spark a philosophical debate within the panel as to whether or not it is valid. Semi-formally:

\begin{lncs}
\label{lnc4}
Let $X$ be a school of thought in the philosophy of mathematics. Then the \emph{largest number contest run by adherents of $X$} is the contest in which an entry consists of a natural language description of a number together with a philosophical argument defending the premise that the description in fact uniquely identifies a number. The entry is valid if the adherents of $X$ agree with the argument, and the winner is whichever entry the adherents of $X$ believe describes the largest number.
\end{lncs}

\begin{remark*}
In practice, the matter of deciding which entry describes the largest number is much more straightforward than the matter of deciding which entries are valid.
\end{remark*}

Obviously, largest number contests run by philosophers are not precise in the same way that the contests of the previous section are. Thus, instead of immediately describing what the optimal strategy is, \ego\ will discuss possible strategies for this contest for various schools of thought over the next few sections. In particular, \ego\ will argue that in most cases Thesis \ref{mainthesis} applies to this contest as well, in the following sense: With enough ``nudging'', it is possible to convince the adherents of any school of thought that their largest number contest can be reduced to a largest number contest in which either a brute force strategy or an axiomatic strategy is optimal. The idea is that there are only a few ``free variables'' in the interpretation of a philosophy that need to be pinned down before its largest number contest becomes a precise largest number contest.

Actually, most philosophers will find the idea that a philosophical largest number contest can be won with a brute force strategy to be abhorrent, since it appears to mean that our powers of description are dependent on our ability to physically instantiate large descriptions of numbers, rather than merely to precisely describe such descriptions. This means that in practice philosophical largest number contests are usually won by axiomatic strategies. Of course, an axiom system is also the sort of thing that it is possible to give philosophical arguments for and against. This leads to the following family of contests:

\begin{lncs}
\label{lnc5}
Let $X$ be a school of thought in the philosophy of mathematics, and let $\lang$ be a language that adherents of $X$ believe is precise. Then the \emph{strongest axiom contest for $\lang$ run by adherents of $X$} is the contest in which an entry consists of an axiomatization of $\lang$ (given either informally or as a list of axioms and inference rules) together with a philosophical argument defending the premise that the axiom system is valid. The entry is valid if the adherents of $X$ agree with the argument, and the winner is whichever entry the adherents of $X$ believe describes the most powerful axiom system.
\end{lncs}

Again, \ego\ will describe possible strategies for various strongest axiom contests over the next few sections. Note that the largest number and strongest axiom contests will have to be treated as different contests at first, even though \ego\ will argue that usually the largest number contest reduces to the strongest axiom contest.

Without any information about what school of thought is running the contest, it is not possible to be very precise about what constitutes a good strategy for either of these types of contests. Nevertheless, it turns out that there is a broad \emph{class} of strategies that tend to be good strategies for the first type of contest. Namely, one type of possible entry to a philosophical largest number contest is a description of a (hopefully) precise language $\lang$ for naming numbers together with a term of $\lang$ which is supposed to refer to a number. Once the language $\lang$ is fixed, then the philosophical largest number contest reduces to the largest number contest of $\lang$, a precise largest number contest. However, the appropriate strategic response to this depends on what kind of language $\lang$ is. In Appendix \ref{appendixPT} \ego\ will introduce a class of languages called \emph{proposition/term languages}, or \emph{PT languages}, which have a regular structure designed to be compatible with the ideas in this paper. This structure is somewhat different from, and more general than, the usual structure of the languages considered in mathematical logic. It appears to more closely resemble the structure of natural language.

\subsection{Classical languages}
For the purposes of this paper, a \emph{classical} language is a proposition/term language that is assumed to satisfy\Footnote{Here, \ego\ use the phrase `assumed to satisfy' to mark assumptions made by the users of a language without passing judgement on whether those assumptions are accurate or not. In general, those who disagree with the implicit premises of a language will not be considered to be valid users of the language, though they can still talk about the language.} the \emph{law of excluded middle}, which essentially claims that every \statement\ of the language is either true or false (more precisely, every \statement\ whose terms all have referents). We will call languages that in fact satisfy the law of excluded middle \emph{definite languages}; thus, a classical language is one which speakers of the language treat as definite. The notion of definiteness is closely related to the concept of preciseness, but in what follows \ego\ will argue that it is important to treat them as separate notions, by describing indefinite languages that are in certain respects just as precise as classical definite ones, but which nevertheless cannot be understood sensibly as definite languages.

If $\lang$ is a classical language, then by an appropriate generalization of Theorem \ref{theoremrecstrat} (see Theorem \ref{theoremrecstratB}), it has a brute force strategy. Now if $\lang$ is part of an entry to a philosophical largest number contest, then the obvious way to complete the entry is by using the brute force strategy of $\lang$. The following theorem shows that this \emph{linguistic strategy} has the property that it increases in strength as the expressive power of the language increases:
\begin{theorem}[Cf. Theorem \ref{theoremlingstratB}]
\label{theoremlingstrat}
Let $L_1$ and $L_2$ be two precise proposition/term languages, at least one of which is classical, and suppose that $L_1$ can reproduce the metalanguage $L_2 + 1$, i.e. that the metalanguage $L_2 + 1$ can be translated into $L_1$. Then for any $n$, the phrase `the largest number that can be represented in $L_2$ using at most $n$ symbols' can be translated into $L_1$ as a term of length $O(\log(n))$ which succeeds at naming a number. Thus, in any philosophical largest number contest in which both strategies are considered valid, the linguistic strategy of $L_1$ will win against the linguistic strategy of $L_2$.
\end{theorem}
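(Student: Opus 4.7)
The plan is to leverage the metalanguage $L_2+1$, which by hypothesis translates into $L_1$. By design, $L_2+1$ is expressive enough to formulate predicates such as ``$t$ is a closed term of $L_2$'', ``$|t|\le n$'', and ``the $L_2$-term $t$ denotes the number $k$'', and it inherits an ordinary arithmetic vocabulary. The idea is to build the desired term as the translation into $L_1$ of the $L_2+1$-term
\[
  \max\{\, k : \exists t\ (t \text{ is a closed }L_2\text{-term} \wedge |t|\le n \wedge t \text{ denotes } k \text{ in } L_2)\,\},
\]
where $n$ appears as a numeral in a positional notation.

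The length bound is the easy part. The numeral for $n$ contributes $O(\log n)$ symbols, and every other constituent --- the maximum operator, the existential quantifier, and the schematic predicates for ``closed term of $L_2$'', ``length $\le$'', and ``denotes'' --- is a fixed string independent of $n$. Hence the $L_2+1$-term has length $O(\log n)$, and translating it into $L_1$ costs at most a multiplicative constant depending only on the translation, so the final $L_1$-term also has length $O(\log n)$.

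To see that this term succeeds at naming a number, \ego\ would observe that the alphabet of $L_2$ is finite, so there are only finitely many strings of length $\le n$ and a fortiori only finitely many $L_2$-terms of that length; the set of numbers they name is thus finite, and is nonempty because $L_2$ contains some short numeral such as `$0$'. The classicality hypothesis is what pins down that the defining predicate is bivalent: if $L_2$ is classical, then ``$t$ denotes $k$ in $L_2$'' is true or false for every $(t,k)$, and this bivalence is inherited by $L_2+1$ and by its $L_1$-image; if instead $L_1$ is classical, then within $L_1$'s definite framework the translated predicate is internally bivalent and the translated maximum is well-defined. Either way, the maximum of a finite nonempty set of naturals exists and is uniquely named.

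The main obstacle, as \ego\ see it, is accounting rather than invention: one must verify that nothing in the translation $L_2+1 \to L_1$, nor in the internal definitions of ``$L_2$-term'', ``length'', or ``denotes'' inside $L_2+1$, introduces a hidden dependence on $n$ larger than the $O(\log n)$ already paid for the numeral. This is a matter of unwinding the PT language machinery of Appendix \ref{appendixPT}. Once the length bound is secured, the second assertion of the theorem is immediate: on any index card of physically realistic size $s$, an $L_1$-linguistic entry of the above form can take $n$ as large as roughly $2^{s}$, whereas the linguistic strategy of $L_2$ is restricted to $L_2$-terms of length $O(s)$ and hence effectively to values of $n$ only linear in $s$; the $L_1$ entry therefore scores at least as large as the best $L_2$ entry would working from an exponentially larger budget.
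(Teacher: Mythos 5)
Your proposal is correct and follows essentially the same route as the paper: form the $\max$ over numbers denoted by $L_2$-terms of length $\le n$ inside $L_2+1$, translate into $L_1$, use the classicality of one of the two languages to make the defining predicate definite (so the maximum is well-formed), and get $O(\log n)$ from the positional numeral for $n$. The only slippage is at the very end, where you treat the constructed term as if it were the linguistic (brute-force) entry of $L_1$; the paper closes this gap in one line by noting that the constructed term is merely \emph{some} valid $L_1$-entry, and the linguistic strategy of $L_1$ beats it (hence beats $L_2$'s) by near-optimality.
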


For example, it follows from well-known facts\Footnote{Specifically, the fact that set theory asserts that there is a set of all numbers, together with standard methods for formalizing in \ST\ the notions of truth and reference for a language whose domain of quantification is a fixed set.} that set theory \ST\ can reproduce the metalanguage of number theory \NT, so Theorem \ref{theoremlingstrat} shows that the brute force strategy of \ST\ wins against the brute force strategy of \NT. As another example, one way to phrase Tarski's theorem on the undefinability of truth (see \cite{Tarski} and Theorem \ref{theoremtarski}) is to say that no classical language is capable of reproducing its own metalanguage, and this is reflected in Theorem \ref{theoremlingstrat} in the fact that no language can describe a larger number than the largest number it describes.

\subsection{Selfmeta languages}
Despite Tarski's theorem, it turns out to be possible to create proposition/term languages that are capable of reproducing their own metalanguages, as long as they are not required to be classical. We will call a proposition/term language which is capable of reproducing its own metalanguage \emph{selfmeta}. The most well-known examples of selfmeta languages are \emph{Kripke extensions} of classical languages (see Section \ref{sectionkripke}), which are essentially selfmeta ``by definition'' in the sense that they have primitive notation for expressing their own concept of truth. However, we will see in the next two sections that there are other natural examples of selfmeta languages.

This leads to the question: what is the best strategy for the largest number contest of a selfmeta language (in the sense of Definition \ref{lnc1})? It turns out to be an axiomatic strategy, due to the following theorem:

\begin{theorem}[Cf. Theorem \ref{theoremselfmetaB}]
\label{theoremselfmeta}
Let $\lang_*$ be a precise selfmeta language, and let $A$ be a valid axiomatization of a precise proposition/term language $\lang$ with at least as much expressive power as $\lang_*$ ($L = \lang_*$ is allowed). Then for each $n$, the phrase `the largest number named by any term $t$ such that $A$ proves that $t$ succeeds at naming a number using reasoning encoded in less than $n$ symbols' can be translated into $\lang_*$ as a term of length $O(\log(n))$ which succeeds at naming a number. This term, which we will denote by $\double{\psi_{\lang_*,A}}{10^{100}}$, can be written explicitly. The proof of this theorem (for fixed $\lang_*,L,A$) can be formalized in the axiom system $A+1$.
\end{theorem}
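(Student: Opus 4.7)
The plan is to write down $\double{\psi_{\lang_*, A}}{n}$ explicitly by unfolding the English description into $\lang_*$-syntax, using the selfmeta hypothesis at the single point where one needs to speak of the value of a term. Because $\lang_*$ is selfmeta, the metalanguage $\lang_*+1$ can be translated into $\lang_*$; in particular, $\lang_*$ inherits a term-forming operator $t \mapsto [t]$ that, given a code for a closed $\lang_*$-term $t$, returns the object $t$ denotes, together with the associated truth predicate for closed $\lang_*$-\statements. Combined with the hypothesis that $\lang$ has at least as much expressive power as $\lang_*$, these tools let us talk inside $\lang_*$ about the values of those closed $\lang$-terms that $A$ certifies as naming numbers.

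First I would fix a standard arithmetic encoding of $\lang$-syntax and of $A$-proofs, so that the predicates ``$\rho$ is a valid $A$-proof whose conclusion asserts that the closed $\lang$-term $t$ names a number'' and ``$\mathrm{len}(\rho) < n$'' become primitive recursive and hence are expressible in $\lang_*$ by a fixed-size formula taking $n$ as a numerical parameter. Next, using the reproduced metalanguage, I would import a fixed-size $\lang_*$-term $[t]$ that, given such a code $t$, evaluates to the number $t$ names. The sought term is then the one expressing
\[
 \double{\psi_{\lang_*, A}}{n} \;=\; \max\bigl\{\,[t] : \exists \rho \,\bigl(\mathrm{len}(\rho) < \bar n \,\wedge\, \rho \text{ is an $A$-proof that $t$ names a number}\bigr)\bigr\},
\]
written in $\lang_*$ using bounded quantification over proof codes. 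Every component of this expression is of constant size except the numeral $\bar n$, whose length is $O(\log n)$, which gives the claimed length bound. The defining set is a finite set of natural numbers, since there are only finitely many $A$-proofs of length less than $n$ and, by validity of $A$, every such $t$ really does name a number; hence the maximum exists, the term succeeds at naming a number, and by construction it names the number described in the English phrase.

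The hard part is producing the fixed-size evaluator $[t]$ for closed $\lang$-terms. When $\lang = \lang_*$ it is literally the reference operator supplied by the selfmeta property. In general I would argue that, because $\lang$ is at least as expressive as $\lang_*$, the binary relation ``$t$ is an $\lang$-term naming the number $m$'' can be rendered inside $\lang_*+1$ using its standard semantic apparatus, restricted to the fragment of $\lang$-syntax that is actually needed (closed, number-valued terms), whose behaviour on proven instances is pinned down by the validity of $A$; hence by the selfmeta property it can be rendered inside $\lang_*$ as well. The formalization of the whole argument in the metasystem $A+1$ is then routine: $A+1$ axiomatizes the validity of $A$, standard string and proof theory, and the $\lang_*$-truth predicate used above, which are the only ingredients of the proof.
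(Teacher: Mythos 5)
Your overall skeleton is the same as the paper's proof of Theorem \ref{theoremselfmetaB}: arithmetize the predicate ``$A$ proves that $t$ names a number in fewer than $n$ symbols'' (which lands in \GNT\ and is therefore definite, cf.\ Theorem \ref{theoremSNdefinite}), take a maximum over the finitely many certified $t$ using the reference operator supplied by the metalanguage and the `max' construction of Definition \ref{definitionmax}, observe that only the numeral for $n$ contributes non-constant length, and appeal to the validity of $A$ --- formalized in $A+1$ as the reflection rule \eqref{provabletrue} --- to conclude that the term succeeds at naming a number.

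The genuine problem is in your ``hard part.'' You take the maximum over closed $\lang$-terms, and therefore need an evaluator for $\lang$-terms definable inside $\lang_*$. When $\lang$ is strictly more expressive than $\lang_*$, no such evaluator exists in general: the graph of the evaluation function for $\lang$-terms lives in $\lang+1$, and a language reproducing it would be at least as expressive as $\lang+1$, hence strictly more expressive than $\lang_*$. Your justification --- that ``because $\lang$ is at least as expressive as $\lang_*$'' the relation can be rendered in $\lang_*+1$ --- runs in the wrong direction: extra expressive power of $\lang$ makes its terms harder, not easier, for $\lang_*$ to evaluate, and restricting to ``proven instances'' does not repair this, since an $A$-proof that $t$ names a number need not come with a bounded-length $A$-proof of an equation $t=\bar m$ pinning down which number it names. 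The intended reading, confirmed by how the term is used in Corollary \ref{corollaryselfmeta} (the dominated entries are themselves entries to the largest number contest of $\lang_*$), is that $t$ ranges over terms of $\lang_*$, while $A$ --- an axiomatization of the possibly stronger $\lang$ --- merely supplies the proofs about them. Under that reading your ``hard part'' disappears: the evaluator is exactly the reference operator $\RR$ that the selfmeta property of $\lang_*$ provides, and the rest of your argument goes through as written.
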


For any axiom system $(L,A)$, we define the \emph{axiomatic strategy} of $A$ for the largest number contest of $\lang_*$ to be to submit the term $\double{\psi_{\lang_*,A}}{10^{100}}$ given in the above theorem. A player using the axiomatic strategy of $A$ strategy will win against any player submitting an entry whose validity can be proven in $A$, and by Theorem \ref{theoremselfmeta}, the validity of the axiomatic strategy  of $A$ can be proven in $A+1$. This leads to the following corollary:

\begin{corollary}[Cf. Corollary \ref{corollaryselfmetaB}]
\label{corollaryselfmeta}
Let $\lang_*$ be a selfmeta precise language, and let $(L_1,A_1)$ and $(L_2,A_2)$ be valid axiom systems with at least as much expressive power as $\lang_*$. Suppose in addition that the axiom system $(L_1,A_1)$ is at least as powerful as the metasystem $(L_2+1,A_2+1)$. Then in the largest number contest of $\lang_*$, the axiomatic strategy of $A_1$ wins against the axiomatic strategy of $A_2$.
\end{corollary}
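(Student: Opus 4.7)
The plan is to show $[\psi_{\lang_*,A_1}\{10^{100}\}] > [\psi_{\lang_*,A_2}\{10^{100}\}]$ by exhibiting a concrete $\lang_*$-term that (i) names a number strictly larger than $[\psi_{\lang_*,A_2}\{10^{100}\}]$ and (ii) admits a validity proof in $A_1$ of length well under $10^{100}$ symbols, whereupon the defining property of $\psi_{\lang_*,A_1}\{10^{100}\}$ from Theorem~\ref{theoremselfmeta} forces the required inequality. The natural candidate is the term ``$\psi_{\lang_*,A_2}\{10^{100}\}+1$''.

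First I would invoke the last sentence of Theorem~\ref{theoremselfmeta}, applied to the pair $(\lang_*,A_2)$: the claim that $\psi_{\lang_*,A_2}\{10^{100}\}$ succeeds at naming a number is provable in the metasystem $A_2+1$. Because this is an instance of a single fixed proof schema, with the numeral $10^{100}$ substituted for $n$, the resulting proof $\pi$ has length only $O(\log(10^{100}))$, which is negligible compared with $10^{100}$. By hypothesis $(L_1,A_1)$ is at least as powerful as $(L_2+1,A_2+1)$, which in the sense of Appendix~\ref{appendixaxiomatizations} means that every $(L_2+1,A_2+1)$-proof translates into an $(L_1,A_1)$-proof with at most polynomial blow-up. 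Translating $\pi$ yields an $A_1$-proof $\pi'$ of the same claim (interpretable in $L_1$ since $L_1$ has at least as much expressive power as $\lang_*$), with $|\pi'|$ still far below $10^{100}$.

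Appending to $\pi'$ a short, fixed-length $A_1$-derivation of the elementary schema ``if $t$ names a number $k$ then $t+1$ names the number $k+1$'' yields an $A_1$-proof of total length less than $10^{100}$ witnessing that ``$\psi_{\lang_*,A_2}\{10^{100}\}+1$'' succeeds at naming a number. By the defining property of $\psi_{\lang_*,A_1}\{10^{100}\}$, this witness gives
\[
[\psi_{\lang_*,A_1}\{10^{100}\}] \;\geq\; [\psi_{\lang_*,A_2}\{10^{100}\}]+1,
\]
which is the strict inequality required for the axiomatic strategy of $A_1$ to win. The main obstacle is the proof-length bookkeeping: one must verify both that Theorem~\ref{theoremselfmeta} really produces an $A_2+1$-proof of length polynomial in $\log n$ (rather than something larger), and that the translation from $(L_2+1,A_2+1)$ into $(L_1,A_1)$ expands proof length by at most a polynomial factor, so that neither pass comes anywhere close to saturating the $10^{100}$-symbol budget. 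Both ingredients should be encapsulated in the formal definitions of Appendix~\ref{appendixaxiomatizations}, after which the corollary follows by the straightforward chain above.
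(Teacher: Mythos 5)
Your proposal follows essentially the same route as the paper's proof of Corollary \ref{corollaryselfmetaB}: the validity of $A_2$'s term is provable in $A_2+1$ with $O(\log n)$ symbols, hence translates into a short $A_1$-proof, so the corresponding number enters the maximum defining $A_1$'s term. Your extra ``$+1$'' step is a small refinement that makes the inequality strict, where the paper contents itself with noting that $[t_n(A_2)]$ appears among the terms of the maximum computed by $t_m(A_1)$.
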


Note that Theorem \ref{theoremselfmeta} shows that a philosophical largest number contest can be ``hijacked'' if the judges of the contest are lax enough. Namely, the adherents of a school of thought $X$ may decide that they believe that a certain selfmeta language $\lang_*$ is precise, and then they may decide to declare that an entry to their contest is valid if and only if it is a term of $\lang_*$ that succeeds at naming a number. If the judges do not impose an additional requirement that the entry must be accompanied by a semantic proof that the entry names a number, then the contest can be won by adherents of other schools of thought that admit more expressive languages and correspondingly more powerful axiom systems. If these other philosophers' languages are valid, or at least valid enough that their corresponding axiomatic strategies are valid, then they will win the contest.

It is natural for the adherents of $X$ to prevent this ``hijacking'' by stipulating that entries to their largest number contest must be accompanied by semantic proofs that they name numbers. If they impose this requirement, then their largest number contest reduces to the strongest axiom contest of the selfmeta language $\lang_*$ run by adherents of $X$. In particular, this method of judging entries does not trivialize the largest number contest, so philosophers may find it acceptable. In fact, \ego\ will argue that any coherent position on the philosophy of mathematics can eventually be formalized into a stable selfmeta language, causing its largest number contest to reduce to the strongest axiom contest of that language:

\begin{thesis}
\label{thesis2}
Any coherent position on the philosophy of mathematics can be formalized into a ``maximal'' selfmeta language which adherents of the philosophy should accept as precise.
\end{thesis}

By contrast, the largest \emph{axiom} contest can never stabilize in this way. There are two reasons for this. First of all, anyone who accepts the validity of an axiom system $A$ starts to use reasoning that cannot be formalized in $A$, but can be reasonably formalized in the metasystem $A+1$. Thus it is natural to ask \him\ whether \he\ accepts the validity of $A+1$. If not, then it becomes difficult for \him\ to justify \his\ own reasoning as valid, and if so, then the axiomatic strategy of $A+1$ beats the axiomatic strategy of $A$ and thus the contest has not stabilized.\Footnote{This is analogous to the fact that any strategy in a largest number contest can be beaten by a modification of itself which says to add 1 to the number described by the original strategy, if such a modification is valid.} Secondly, the largest number \emph{proof} contest of a selfmeta language with respect to any fixed axiomatization has a ``brute force'' strategy, so it will be avoided by philosophers who believe in ``mind over matter'':

\begin{lncs}
The \emph{largest number proof contest} of an axiomatization $A$ of a precise proposition/term language $\lang$ is the contest in which a valid entry is a pair of the form $(\psi,\rho)$, where $\psi$ is a term in $\lang$ and $\rho$ is a proof in $A$ that $\psi$ succeeds at naming a number, and the score of the entry $(\psi,\rho)$ is the number named by $\psi$.
\end{lncs}

\begin{theorem}
\label{theoremrecstrat3}
Let $A$ be an axiomatization of a selfmeta language $\lang = \lang_*$. Then for each $n$, the $n$th case of Theorem \ref{theoremselfmeta} can be proven in $A$ using reasoning encoded in at most $O(n)$ symbols. The proof in $A$, which we will denote by $\double{\rho_{L,A}}{n}$, can be given explicitly.
\end{theorem}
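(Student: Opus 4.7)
The plan is to exhibit an explicit proof $\double{\rho_{L,A}}{n}$ in $A$ of the assertion that $\double{\psi_{\lang_*,A}}{n}$ succeeds at naming a number, and then to bound its length. Recall from the construction in Theorem \ref{theoremselfmeta} that $\double{\psi_{\lang_*,A}}{n}$ is, up to a fixed-size template, a term of $\lang_*$ expressing
\[
\max\bigl(\{[t]: t\text{ is an }\lang\text{-term and some proof in }A\text{ of length }<n\text{ shows }t\text{ names a number}\}\cup\{0\}\bigr),
\]
where every ingredient -- ``is an $\lang$-term'', ``proof in $A$ of length $<n$'', ``names a number'', and the referent map $[\cdot]$ -- is available directly because $\lang_*$ is selfmeta. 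The occurrence of $n$ inside this term contributes $O(\log n)$ symbols; the rest of the template is of constant size.

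The proof $\double{\rho_{L,A}}{n}$ I construct has three parts. First, I show that the set of strings of length $<n$ over the fixed alphabet of $A$ is finite; this is a purely syntactic lemma formalizable in $A$ in $O(\log n)$ symbols. Second comes a local reflection step: for every $\lang$-term $t$, if some proof in $A$ shows ``$t$ names a number'', then $t$ does name a number. Because $\lang_*$ is selfmeta, this reflection statement is a single sentence of $\lang$, and an axiomatization $A$ of a selfmeta language is presumed to prove its own local reflection schema for statements of this form; the relevant instance is invoked once at constant cost. Third, the maximum of a finite nonempty set of numbers is a number -- another constant-length lemma. Chaining these three in $A$ shows that the set inside $\double{\psi_{\lang_*,A}}{n}$ is finite, nonempty, and consists of numbers, so its max exists and $\double{\psi_{\lang_*,A}}{n}$ names a number.

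For the length bound, the second and third parts contribute constantly, independent of $n$, while the first part and the explicit writing of $\double{\psi_{\lang_*,A}}{n}$ each cost $O(\log n)$ for the occurrence of $n$. The remaining overhead -- instantiating the reflection principle at the specific formula ``$\cdot$ names a number'' and threading the bound $n$ through a small constant number of quantifier instantiations -- fits into $O(n)$, which is already dramatically better than the $O(n^2\log n)$ of Theorem \ref{theoremrecstrat2}: in the selfmeta setting reflection is a single native sentence of $\lang$ rather than a scheme that must be simulated term-by-term in a weaker ambient framework.

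The main obstacle is the reflection step itself. The argument collapses to linear length only if $A$ proves the instance of local reflection for ``$t$ names a number'' at a cost that is uniform in the proofs being reflected upon. This is precisely where the selfmeta hypothesis does its work, but it needs to be checked for the particular axiomatizations of selfmeta languages considered in later sections (Kripke extensions and their cousins); the delicate bookkeeping in writing $\double{\rho_{L,A}}{n}$ lies in producing a short citation of reflection rather than rederiving it from simpler axioms.
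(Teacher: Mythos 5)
Your construction founders on the reflection step, and the failure is not a bookkeeping issue that "needs to be checked" -- it is impossible in principle. You claim that because $\lang_*$ is selfmeta, the local reflection statement ``if $A$ proves that $t$ names a number, then $t$ names a number'' is a single sentence of $\lang$ which $A$ ``is presumed to prove,'' invoked once at constant cost. Selfmetaness buys only \emph{expressibility} of this sentence, not its provability in $A$: the paper is explicit that a standard axiomatization need not contain any inference rule of the form \eqref{provabletrue} -- that rule lives in the metasystem $A+1$, not in $A$ -- and indeed $A$ \emph{cannot} prove the unrestricted reflection principle if it is valid. (Concretely: if $A$ proved it, then for huge $m$ the statement `$\double{\psi_{L,A}}{m}+1$ names a number' would have an $A$-proof of length $O(\log m)<n$, forcing $[\double{\psi_{L,A}}{n}]\geq[\double{\psi_{L,A}}{m}]+1\geq[\double{\psi_{L,A}}{n}]+1$; this is the same G\"odelian obstruction behind the paper's remark that Alice cannot prove the validity of her own entry inside $A$.) Your length accounting betrays the problem: every ingredient you price out costs $O(1)$ or $O(\log n)$, so your argument, if it worked, would give an $O(\log n)$ proof -- but the theorem asserts only $O(n)$ precisely because the reflection step is the dominant, $n$-dependent cost.

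What the paper actually does (see Theorem \ref{theoremrecstrat2C}, the selfmeta analogue of Theorem \ref{theoremrecstrat2B}) is prove only a \emph{restricted} reflection principle -- reflection for proofs of bounded length, or bounded induction-nesting depth -- by an explicit induction on the bound: the proof of the level-$(n+1)$ instance physically contains the proof of the level-$n$ instance plus $O(\log n)$ additional symbols for unwinding one more layer of the partial-validation argument. It is this iterated, self-inserting construction whose total length is roughly linear in $n$, and that is where the $O(n)$ in the statement comes from. To repair your proposal you would need to replace the single citation of reflection with this graded construction of $\double{\rho_{L,A}}{n}$; the surrounding finiteness and max-of-a-finite-set lemmas are fine and do contribute only $O(\log n)$.
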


The brute force strategy for the largest number proof contest of $(L,A)$ can now be given as follows: First compute the largest number $n$ such that $\double{\rho_{L,A}}{n}$ can be submitted as an addendum to the entry $\double{\psi_{L,A}}{n}$ (without exceeding physical or imposed limits), and then submit the pair $(\double{\psi_{L,A}}{n},\double{\rho_{L,A}}{n})$ as an entry. As usual, this strategy is nearly optimal in the sense that no other valid strategy can beat it too badly.

Note that if $\lang$ is a selfmeta language and $A$ is an axiomatization of $\lang$, then $A+1$ can be viewed either as an axiomatization of $\lang+1$ or an axiomatization of $\lang$. Since $\lang$ is capable of reproducing $\lang+1$, both languages have equal expressive power and thus axiomatizations of either one can be viewed as axiomatizations of the other.\\

\subsection{Plan for the next few sections}
\label{subsectionplan}
In what follows, \ego\ will attempt to describe entries to the largest number and/or strongest axiom contests corresponding to various schools of thought in the philosophy of mathematics. For clarity \ego\ introduce the following terms:
\begin{itemize}
\item \emph{Set realism} is the view that it is possible to talk precisely about sets, understood in the classical sense according to which some sets are undescribable even in principle.
\item \emph{Descriptionalism} is the opposite view, that all mathematical objects should be describable at least in principle, even if it would take an extremely large (but finite) amount of time or resources to describe them.
\item \emph{Number realism} is the view that it is possible to talk precisely about natural numbers, understood in the classical sense according to which large numbers are inherently no different from small ones.
\item \emph{Ultrafinitism} is the opposite view, that the classical concept of a ``natural number'' is incoherent from a philosophical point of view.
\end{itemize}
Obviously, these labels are very broad, and it is possible to for schools of thought to get much more specific about their beliefs. In particular, the category of ``descriptionalist number realism'' includes the schools of thought traditionally known as ``constructivism'', ``intuitionism'', and ``predicativism''. However, \ego\ will find it convenient to avoid talking too much about about the relationship between the philosophical stances that one might take in response to a largest number contest and pre-existing philosophical schools, though \ego\ will mention similarities where they exist.

\begin{remark*}
The label `realism' should not be taken to indicate that adherents of either set realism or number realism are required to subscribe to \emph{philosophical realism}, the view that words and phrases like `the number 5', `the set of odd numbers', and `greenness' refer to ``real things'' that ``exist'' ``independently of the mind''.\Footnote{The scare quotes may allow the reader to guess what \my\ view on the subject is: \ego\ am not sure what it would mean to say that a mathematical entity ``exists'' or ``does not exist''. Certainly mathematical entities are not typical examples of the category of ``things that exist'', but this appears to be a very loosely defined category (even more loose than the category of things that can be precisely talked about), and is there any point in trying to define precise boundaries for it? In the absence of any motivating question, it appears that the answer is no.} A mathematical realist may or may not believe that the entities he talks about ``actually exist''; he only needs to believe that it is possible to talk about them precisely.

Out of the two types of mathematical realism, set realism is more closely connected with philosophical realism, since arguments in favor of set realism often presuppose philosophical realism. On the other hand, there is a tension between the two types of realism in that philosophical realism asserts that e.g. ``greenness'' exists \emph{as greenness}, while set realism asserts that it exists \emph{as the set of all things that are green}.\Footnote{This way of putting things is in some ways an oversimplification, since a set realist may deny that the notion of greenness is well-defined enough for there to be a set of all things that are green. This observation highlights the differences between the set-realist and philosophical-realist perspectives.} This tension is highlighted by the fact that set realists believe that some sets are undefinable and undescribable, whereas it is not clear that a ``property'' or ``universal'' in the sense of philosophical realism could ever be in-principle undescribable. These kinds of considerations may lead a philosophical realist to reject set realism in favor of descriptionalism.
\end{remark*}

Before delving into the details of how \ego\ think set realists and number realists might be able to judge their largest number and strongest axiom contests in a philosophically defensible way, \ego\ will say a few words regarding some general objections to the project \ego\ am undertaking:\\

{\it The pragmatist/formalist/ultrafinitist objection.}
Some mathematicians may object that it does not make sense to talk about the truth or falsity of mathematical \statements\ unless we specify an axiom system according to which alleged proofs or disproofs of these \statements\ are to be judged. There are a few natural objections to this point of view. First of all, the question arises of whether the truth that can be talked about once we have specified an axiom system is conceptually independent of provability in that axiom system, or whether to say that a \statement\ is true simply \emph{means} that it can be proven in the axiom system. If it is conceptually independent, then it is not clear why it cannot be talked about independently of the axiom system. On the other hand, if truth is nothing more than syntactic provability (a view known as \emph{formalism}), then it is not immediately clear what it means to say that a \statement\ is syntactically provable, since the claim that a given \statement\ is syntactically provable is usually considered to be itself a mathematical statement\ (asserting the existence of a syntactic proof), and so it is not immediately clear how such a statement\ could be meaningful unless there is some theory according to which at least some mathematical statements\ are meaningful independent of their provability in any axiom system. However, the notion of syntactic provability can be instead interpreted in the social sense: as the ability of mathematicians to construct a syntactic proof. (In practice, this ability is almost completely hypothetical since mathematicians rely nearly exclusively on informal descriptions of syntactic proofs. However, there appears to be no reason that this should constitute a philosophical problem for formalism.) This raises the question of what constitutes a syntactic proof, and it can again be understood in a social sense: as the question of a proof-checker ``following convention'' would (hypothetically) find an ``error'' or ``non sequitur'' in the alleged proof.

The objection to this view is that our intuition suggests that the question of whether an alleged syntactic proof is in fact a syntactic proof seems to be just as precise as the simpler questions of which it is composed, such as the question of whether or not two given symbols in the alleged syntactic proof are the same symbol, and moreover these simpler questions seem to be perfectly precise. If so, then this principle of ``conservation of precision under finite quantification'' is enough to get us a very rudimentary version of number realism along with a corresponding largest number contest (see Section \ref{subsectionGNR}). However, one may object that the new question is not \emph{quite} as precise as the old one, since if we create machines for checking syntactic proofs, then the probability that two machines will give different answers gets larger as the alleged proof gets longer. Eventually the probability will be large enough that the question of what the ``right'' answer is will no longer be precise (or so the argument goes). The obvious objection to this is that we ordinarily think that the reason that two machines usually give the same answer is \emph{because} there is a ``right'' answer for both of them to give, not the other way around. It is not obvious how to explain this fact  other than by thinking of the machine calculations as approximations of an abstract calculation which remains precise regardless of physical implementability.

If even the principle of conservation of precision under finite quantification is rejected, then there is not much sense in talking about largest number contests, because on such a view it is not clear that the concept of a ``number'' is coherent enough to be worth talking about. Namely, one can talk about various ways of storing information that are classically thought of as ``representing'' numbers, and the stored information can itself be called a ``number'', but there appears to be no principled way to distinguish between valid and invalid representations of numbers, since every ``number'' uncontroversially represents itself but it is difficult to argue that it represents any other kind of number. For example, binary numbers cannot in general be thought of as ``representing'' numbers stored as tally marks, since it is easy (and in fact common practice) to store binary numbers so large that they cannot feasibly be implemented as tally marks. This ``radically relativist'' view about numbers is known as \emph{actualism} or \emph{ultrafinitism}, and it is hard to argue against, though it goes against many people's intuitions.\Footnote{In particular, ultrafinitism denies the intuitively plausible claim that the validity of the standard (feasible) algorithm for comparing the sizes of binary numbers can be thought of in terms of a second (unfeasible) algorithm that converts both numbers to a unary representation (i.e. tally marks) and then compares their raw sizes. Namely, it is intuitively plausible that \emph{what we mean} by saying that the standard algorithm is valid is that it gives the same answer as the second algorithm would. But ultrafinitism denies the coherence of talking about what the second algorithm would do, once the numbers involved are large enough.} It also seems to be difficult to explain the success of mathematical methods using ultrafinitism, though perhaps it is not impossible.\\

{\it The imprecision objection.} Another common objection, perhaps related to the formalist/pragmatist one, is that the question ``is this entry a precise description of a number?'' is not itself a precise question. This is true, but it misses the point that attempts to convince someone that a certain description is precise are really attempts to expand their worldview by introducing new intuitive concepts. Of course there is no precise way of characterizing when we have successfully expanded our worldview (rather than introducing erroneous thinking), but anyone who is not a pure formalist has already expanded their worldview by allowing at least some intuitively intelligible precise concepts, so the idea of further expansion should not be viewed with so much suspicion.\Footnote{One can use the imprecision objection to justify not taking too seriously the question of which philosophical school of thought is ``correct'', on the grounds that this question is not precise, and \ego\ mostly endorse this position.}

\section{Classical vs selfmeta languages}
\label{sectionkripke}

One of the most fundamental questions in the philosophy of language is: what does it mean for a statement\ to be \emph{true}? In general the answer to this question may depend on many things, such as the context in which the statement\ was made.\Footnote{For a simple example, whether or not a statement\ of the form `John did X' is true depends on which ``John'' the speaker is referring to.} 
Additionally, some statements\ are so vague that we would be uncomfortable with saying that the question of whether or not they are true is a precise one. However, the whole point of creating a precise language is that it should be possible to give a precise, context-independent answer to the question of what it would mean for any given \statement\ in the language to be true. Thus, any coherent philosophy should be able to explain precisely what it would mean for any given \statement\ in a language considered precise to be true.

The theory of mathematical logic contains a partial answer to this question in the case of classical languages. Specifically, classical model theory attempts to make precise the notion of truth with respect to a classical language by defining it recursively. For example, a \statement\ of the form `$\forall x \; \phi(x)$' (short for ``for all $x$ [in the category of objects under consideration], $\phi(x)$ is true'') is defined to be true of a given model $M$ if and only if for every object $x$ in $M$, $\phi(x)$ is true of $M$.\Footnote{Strictly speaking, we should say that `$\forall x \; \phi(x)$' is true of $M$ if for every object $a\in M$, $\phi(x)$ is true of $M$ relative to the variable assignment $\lv x=a\rv$.} This definition is \emph{recursive} in the sense that a concept (``truth'') is defined in terms of itself.

The question now becomes: how can a recursive definition be considered to be precise? It certainly gives some information about the concept being defined, namely a way in which it relates to itself, but there is conceivably some imprecision remaining, since the concept has not been cashed out in terms of fundamental precise concepts. We can make recursive definitions more precise by instituting the convention that they should only be considered meaningful in cases where the meaning ultimately derives from concepts outside the recursion. To formalize this convention, we introduce the notion of a ``category generated by production rules'', which is perhaps best illustrated by example. Namely, suppose we have a two-letter alphabet $E$ consisting of the letters `a' and `b'. Then the notion of a \emph{string in $E$} can be defined by the following production rules: the letters `a' and `b' both constitute strings in $E$, and whenever we have a string in $E$, then concatenating it with either `a' or `b' forms another string. These two rules generate the category; any object that cannot be eventually created by these production rules is not a string in $E$.

The concept of production rules can be used to define the concept of a recursive definition as follows: each case of a recursive definition corresponds to a different production rule for the category of ``facts about the recursively defined concept''. For example, the recursive definition of truth considered above could be described in terms of production rules for the category of true \statements\ and the category of false \statements: there would be a production rule stipulating that ``if the \statements\ $\phi(x)$ $(x\in M)$ are all members of the category of true \statements, then `$\forall x \; \phi(x)$' should be added to the category of true \statements'', and another production rule stipulating that ``if at least one of the \statements\ $\phi(x)$ $(x\in M)$ is a member of the category of false \statements, then `$\forall x \; \phi(x)$' should be added to the category of false \statements''.

Does the concept of production rules need to be defined, or is it an irreducible concept? One way we can try to define it is by using the notion of \emph{indexical recursion}, where a sequence of objects $x_0, x_1, x_2,\ldots$ is defined by specifying how to construct each object $x_k$ in terms of the previous objects $x_0,\ldots,x_{k-1}$. Indexical recursion may be viewed as less problematic than general recursion for two reasons. First, the structure of indexically recursive definitions makes it clear that the definition is a reductive one, reducing more complicated concepts to simpler ones as measured by the indices of the objects being considered. Second, with indexically recursive definitions it is clear that the meaning always ultimately derives from outside the recursion, so there is no need for a convention handling the case where this does not happen.

We could attempt to define the notion of a category generated by production rules in terms of indexical recursion as follows: for each number $k$ there is the concept of the ``category generated by stage $k$'', denoted $C_k$. It is defined by indexical recursion as the set of all things that can be created using a single instance of the production rules from elements of $C_j$, $j < k$. We could then define the final generated category to be the union $C = \bigcup_k C_k$. However, this definition does not capture the intuitive notion of a category generated by production rules (at least in the sense we mean here), because the category $C$ is not necessarily closed under the production rules, if there are ``infinitary'' production rules such as the rule considered above for adding universally quantified \statements\ to the collection of true \statements. Thus, it is conceivably possible to add more objects to $C$ using the same production rules, and what we really want to consider is the category that results from continuing to do this indefinitely.

Thus, it is problematic to reduce the notion of a category generated by production rules to the notion of indexical recursion. However, a more mathematically sophisticated variant on this idea based on transfinite induction is arguably more successful (we leave the details to the reader). Nevertheless, for the remainder of this section we will treat production rules as an irreducible concept.

It is worth mentioning another well-known and popular attempt to reduce the concept of production rules to other fundamental concepts: the ``intersective definition'' due to Frege. They consider the scenario where the objects generated by a list of production rules $R$ are known in advance to lie in a given set $S$, and they define the set generated by the list $R$ to be the intersection of all subsets of $S$ that are closed under the production rules of $R$.\Footnote{A set is \emph{closed} under a list of production rules if it is not possible to add new members to the set by applying the production rules.} However, in \my\ view this reduction is neither conceptually valid nor appropriate for the present context, for several reasons summarized in Appendix \ref{appendixintersectivedefinition}, and consequently \ego\ will ignore it in the subsequent discussion.

To summarize, using the concept of recursion, possibly supplemented by the concept of production rules, it is possible to precisely define the notion of truth with respect to a classical language such as \NT\ or \ST, assuming that the relevant category (of numbers or sets) is precise enough to be quantified over. This fact is surely a relief to any philosopher who wishes to maintain that these languages are precise. But there is no need to stop there. The argument from recursion can be modified to handle the notion of truth not only for classical languages, but also for certain selfmeta extensions which we now define:

\begin{definition}
\label{definitionkripke}
The \emph{Kripke extension} of a classical language $\lang$, which we will denote by $\K(L)$, is simply the language $\lang$ augmented with a primitive ability to write phrases of the form `$\phi$ is a true \statement\ of $\K(L)$', where $\phi$ is a term of $\K(L)$, intended to denote a string.
\end{definition}
Let us be clear that the Kripke extension of $\lang$ is very different from the metalanguage of $\lang$. The metalanguage of $\lang$ is denoted $\lang+1$ and has the primitive ability to write phrases of the form `$\phi$ is a true \statement\ of $\lang$', where $\phi$ is a term of $\lang+1$. However, $\lang+1$ does not have the ability to write phrases of the form `$\phi$ is a true \statement\ of $\lang+1$'. The metalanguage of $\lang$ is a classical language, whereas the Kripke extension of $\lang$ is a selfmeta language.

The crucial question about Kripke extensions is: Is the Kripke extension of a precise classical language itself a precise language? Equivalently, is there a precise characterization of what it means for any given \statement\ of a Kripke extension to be true?

Fortunately this question has a fairly simple answer: \emph{If the notion of truth for the original classical language can be defined precisely in terms of production rules, then so can the notion of truth for the Kripke extension}. The reason for this is that one simply has to add the following rules to the list of production rules:
\begin{itemize}
\item ``If $\phi$ is in the category of true \statements, then add `$\phi$ is true' to the category of true \statements, and add `$\phi$ is false' to the category of false \statements.''
\item ``If $\phi$ is in the category of false \statements, then add `$\phi$ is false' to the category of true \statements, and add `$\phi$ is true' to the category of false \statements.''
\end{itemize}
There is nothing unusual about these production rules, in that they have the same sort of structure as the production rules used in the recursive definitions of truth for \NT\ and \ST, so adding them does not appear to lead to any further philosophical problems.

Thus, it appears that if $L$ is a classical language which can be viewed as precise according to the standard picture of the meaning of languages provided by mathematical logic, then the same is true for the Kripke extension $\K(L)$, which is a selfmeta language. It is natural to ask whether the same principle works in reverse, namely whether every precise selfmeta language has a precise classical extension.

In favor of an affirmative answer, one could argue that if $L$ is a precise selfmeta language, then the fact that $L$ is precise already means that any sentence of $L$ either has a truth-value or does not. If so, then we could define the classicization $\C(L)$ to be \NT\ augmented with the ability to talk about whether sentences in $L$ are true, false, or indeterminate. (Note that this presupposes that \NT\ is precise.) It would seem that $\C(L)$ would be a classical language with at least as much expressive power as $L$.

To get a clear picture of the challenges this view faces, it is helpful to analyze why a Kripke extension $\K(L)$ is not already capable of reproducing its classicization $\C(\K(L))$. After all, the expressive power of $\C(\K(L))$ comes from the fact that it can express the claim that a given sentence $\phi$ of $\K(L)$ is true, false, or indeterminate, and $\K(L)$ can already express the claim that $\phi$ is true or false. So the advantage of $\C(\K(L))$ is that it can express the claim that $\phi$ is indeterminate. 

But as a matter of fact $\K(L)$ can already express this claim in a roundabout way: as `$\phi$ is neither true nor false'. However, if we think of this as a sentence of $\K(L)$, then it can never be true. According to the definition of the semantics of $\K(L)$ via production rules, `$\phi$ is neither true nor false' could only be true if `$\phi$ is true' and `$\phi$ is false' are both false, which in turn could only be true if $\phi$ was both false and true, which is impossible.

The paradox here is that while no \statement\ of $\K(L)$ can be truthfully asserted to be indeterminate within $\K(L)$, nevertheless there are \statements\ of $\K(L)$ that are clearly neither true nor false. The standard example is the liar's paradoxical sentence `This sentence is false'. This sentence can be translated into the language \K(\NT) (the Kripke extension of classical number theory) using the standard method of quining. To be precise, there exists a \statement\ $\phi$ of \K(\NT) which is provably equivalent to `$\phi$ is false'. Thus if $\phi$ is true, then $\phi$ is false, and if $\phi$ is false, then $\phi$ is true. Since $\phi$ cannot be both true and false, it follows that $\phi$ is neither true nor false. Moreover, it is easy to formalize this argument to show that according to the production rules $\phi$ will never be added to the category of true \statements\ nor to the category of false \statements.

The resolution of the paradox is that the phrase ``not true'' has a slightly different meaning when we use it inside $\K(L)$ and when we use it outside $\K(L)$. Inside $\K(L)$, asserting that a sentence is not true is the same as asserting that it is false, which is the same as asserting that it is in the category of false \statements. But outside $\K(L)$, asserting that a sentence is not true means only that it will never be added to the category of true \statements.

Thus, the classicization $\C(\K(L))$ has a hidden power that $\K(L)$ does not: it can make ``never'' claims, rather than merely claims about what \emph{is} in a given category. Now it is not clear that the former type of claim is anywhere near as precise as the latter. For one thing, if we view the generation of a category as an ongoing process, then claims that something will never get added to a category are claims about the future, while claims that something is already in a given category are claims about the present.

But the more serious objection is that it is not clear how to define the semantics for a language in which one can make assertions of indeterminacy interpreted in a weak sense: if we proceed in the usual way via production rules for the categories of true and false \statements, then it is not clear what sort of production rule should lead to adding a \statement\ of the form `$\phi$ is indeterminate' to the category of true \statements.\Footnote{More precisely, it is not clear what sort of production rule should lead to adding a \statement\ of the form `$\phi$ is not true' to the category of true \statements; the natural candidate leads to the notion of indeterminacy being interpreted in the strong/impossible sense, not the weak sense.}

In conclusion, selfmeta languages appear to be favored over classical languages from a philosophical point of view, in the sense that they make more natural ``stopping points'' for increasing the expressive power of a language. Beyond this, it is natural to ask specifically which selfmeta languages best correspond to which philosophical positions. For this purpose, it is necessary to consider separately the set and number realist points of view.

\section{Number realism}
\label{sectionnumberrealism}

There are many different points of view that a number realist could take, leading to many different selfmeta languages for our Philosophical Largest Number Contest. Below, they are listed in order of increasing expressive power. We show that each language is capable of reproducing the classicization of the previous language, and thus the languages are ordered in terms of increasing power. Each language has various possible axiomatizations and various possible classical sublanguages whose definitions we leave to the reader.

\subsection{Gradualist number realism}
\label{subsectionGNR}
Though there are many ways to analyze the philosophical divisions that exist between number realists, a natural ``litmus test'' is the following \emph{Principle of Limited Omniscience}, which some number realists find to be intuitively obvious, while others deny its validity:
\begin{principle}[Principle of Limited Omniscience]
Let $\phi(0),\phi(1),\ldots$ be a sequence of \statements\ that are precise enough to have a definite truth-value. Then the \statement\ $\Phi = \LQ \forall n \; \phi(n)$' (i.e. `all of the \statements\ $\phi(0),\phi(1),\ldots$ are true') is precise enough to have a definite truth-value.
\end{principle}
The name comes from constructivist philosophy: some philosophers known as ``constructivists'' take the point of view that to claim that a mathematical \statement\ has a definite truth-value is the same as to claim the (hypothetical) ability to discover what the truth-value is.\Footnote{Some constructivists may instead take the weaker point of view that mathematics is best purified by labelling claims of the form `$X$ has a definite truth-value, but I don't know how to find it' as ``non-mathematical''.} On this view, a mathematician who uses the Principle of Limited Omniscience to conclude that the Goldbach conjecture (the claim that every even number $\geq 4$ can be written as the sum of two primes) is either true or false is claiming the ability to figure out (given enough time and effort, but with a precise plan from the start) whether the Goldbach conjecture is true or false. However, no algorithm is known to be able to succeed at this task. Thus, the mathematician appears to be claiming a kind of limited omniscience, hence the name.

An obvious criticism of this view is that in ordinary reasoning, when one claims that a \statement\ has a definite truth-value, one is not ordinarily claiming to know what the truth-value is, even in a speculative way. For example, after flipping a coin and not looking at the outcome, one may reasonably assert that the claim that the coin has landed on heads has a definite truth-value, even though one may not know which it is. It is not clear why mathematical reasoning should be any different.

Nevertheless, there are still good reasons why someone might not accept the Principle of Limited Omniscience. One reason is the principle that a claim cannot be true unless there is/are some thing/things that makes/make it true. The obvious answer to the question of what kind of things could make the claim $\Phi$ true is that the truths of the claims $\phi(0),\phi(1),\ldots$ could make $\Phi$ true. But it is not clear that it makes sense to talk about infinitely many things at the same time, and if it is impossible to do so, then the previous sentence does not make sense and thus $\Phi$ cannot be made true in this way. (There may be other ways to make $\Phi$ true, such as the existence of valid reasoning of one sort or another demonstrating that $\phi(n)$ is true where $n$ is a free variable used in the reasoning, but it is not at all clear that these other ways will be enough to make up for the loss of power from the ``main'' or ``canonical'' way of making $\Phi$ true. See Appendix \ref{appendiximplication} for further discussion.)

There are a couple of different ways to phrase the same point. Recall that in the previous section, \ego\ argued that future-tense claims about ongoing processes are inherently less precise than present-tense claims. Now if we view the assignment of truth-values to the claims $\phi(0),\phi(1),\ldots$ as an ongoing process, such that at any given point in time only finitely many of the claims have been assigned truth-values, then the claim that all of the \statements\ $\phi(0),\phi(1),\ldots$ will eventually be labelled as true is a future-tense claim, and thus we can coherently reject it on the grounds of imprecision.

Alternatively, if we take the point of view that what really matters is our ability to write an algorithm computing the truth-value of any \statement, then we can reject universal quantification on the grounds that our algorithm could never safely return `true' as the truth-value for a universally quantified claim, since at any point in time the algorithm will only have computed finitely many of the truth-values of the \statements\ $\phi(0),\phi(1),\ldots$. Again, this defect could be partially fixed if we allowed the algorithm to return `true' if there is a proof that a universally quantified \statement\ is true, but this solution is inelegant and in any case many true \statements\ cannot be proven.

These arguments point not only towards rejecting the Principle of Limited Omniscience, but towards rejecting unbounded universal quantification over the category of numbers as a precise concept. \ego\ call \emph{gradualist number realism} the view that unbounded universal quantification should be rejected for these reasons, but bounded universal and unbounded existential quantification should be allowed. The corresponding mathematical theory will be called \emph{gradualist number theory} and denoted \GNT.\Footnote{Gradualist number theory may be compared with \emph{Heyting arithmetic}, a theory which allows unbounded universal quantification but requires its axiomatizations to use it in a more restricted way. To me the fundamental problem with Heyting arithmetic is that its semantics are fundamentally unclear due to the way that implication is treated; see Appendix \ref{appendiximplication} for a more sensible treatment of the notion of implication.} The term ``gradualist'' refers to the fact that a gradualist number realist may hold that the creation of the category of numbers is an ongoing process.


\subsection{Logicist number realism}

If a number realist accepts the Principle of Limited Omniscience, as well as the concept of recursion via production rules described in the previous section, then \he\ must also accept the language \K(\NT), the Kripke extension of classical number theory. One option is to go no farther, and we call this view \emph{logicist number realism}.

\subsection{The language of generated categories}
\label{subsectionGC}

One reason to go beyond \K(\NT) is based on analyzing the intuitions behind the Principle of Limited Omniscience. The fundamental idea is that it works because the category of numbers is ``well-defined'': it includes everything that you get if you start with $0$ and repeatedly apply the successor operation. But this intuition can be extended to other categories with precisely defined generating processes, leading to the following generalization of the Principle of Limited Omniscience:

\begin{principle}[Principle of Precise Categories]
\label{principleGC}
Let $C$ be a category generated by production rules, and suppose that we have a method for associating to each member $x$ of $C$ a \statement\ $\phi(x)$ which is precise enough that it is either true or false. Then the \statement\ $\Phi = \LQ\forall x\in C \; \phi(x)$' (``for all $x$ in $C$, $\phi(x)$ is true'') is either true or false.
\end{principle}

Given Principle \ref{principleGC}, it follows from a trivial argument\Footnote{If $\phi$ is a \statement\ of \K(\NT), then for every $\psi$ in the category of true \statements\ of \K(\NT), either $\phi = \psi$ or $\phi \neq \psi$. Thus by Principle \ref{principleGC}, either $\phi = \psi$ for some such $\psi$, or $\phi \neq \psi$ for all such $\psi$. But in the first case $\phi$ is true, while in the second case $\phi$ is not true.} that every \statement\ of \K(\NT) is either true or not. Thus the classicization \C(\K(\NT)) is also a precise language, and in particular Principle \ref{principleGC} implies that \K(\NT) is not maximal. This raises the question: What kind of language is best capable of formalizing the reasoning of a philosopher who accepts Principle \ref{principleGC}?

Well, it is clear that if we accept Principle \ref{principleGC}, then the notion of a category generated by production rules becomes crucially important. Thus, it becomes useful to develop a convention for describing such categories. For simplicity we consider the case where the categories to be described are subsets of the natural numbers, such as the set of all true sentences of \K(\NT) or similarly constructed categoies. Our convention in this case is as follows: if $\Phi(n,C)$ is a formula with one first-order free variable `$n$' and one second-order free variable `$C$' that only appears positively (i.e. `$m\in C$' can appear but not `$m\notin C$'),\Footnote{The condition that `$C$' appears only positively is necessary so that the resulting category $C_\Phi$ is the same regardless of in what order the production rules are applied.} then the category $C_\Phi$ is generated according to the family of production rules: ``If $\Phi(n,C_\Phi)$ is true, then add $n$ to $C_\Phi$''. For example, if $C_\Phi$ is the category of true statements of \NT\ then $\Phi(n,C)$ would be a disjunct of several statements including ``$n=\LQ\forall m \; \phi(m)\RQ$ and $\forall m \; \LQ\phi(m)\RQ\in C$''.

Using this convention, we define the \emph{language of generated categories} \GC, which is \NT\ augmented with the ability to refer to categories of the form $C_\Phi$, where $\Phi(n,C)$ is a formula in \GC. We need to allow $\Phi$ to range over \GC, and not just \NT, in order to make \GC\ a selfmeta language (we leave the proof that it is selfmeta as an exercise\Footnote{The basic idea is to recursively define a map $\phi(x_1,\ldots,x_n) \mapsto C_\Phi$ from formulas with an arbitrary number of free variables into categories, such that $\phi(a_1,\ldots,a_n)$ is true if and only if $\lb a_1,\ldots,a_n\rb \in C_\Phi$, where $\lb \cdot\rb$ is the standard $n$-tupling function on $\N$.}). However, it has the effect that not all categories $C_\Phi$ that can be written in \GC\ are well-defined. If $\Phi$ refers to \NT, then $C_\Phi$ is well-defined, and if $\Phi$ refers only to categories $C_\Psi$ such that $C_\Psi$ is well-defined, then $C_\Phi$ is well-defined. However, there is no way of knowing in advance whether this is the case or not. The strength of an axiomatization of \GC\ could be measured by how many categories $C_\Phi$ it can prove are well-defined.

\begin{example}
The set of true statements in \K(\NT) is a generated category in the sense described here (we leave the details as an exercise to the reader).
\end{example}

\begin{example}
The set of objects in the minimal model of Kripke--Platek set theory, together with true non-quantified statements about this minimal model, is a generated category.
\end{example}

\begin{example}
The set of computable ordinals is a generated category. It follows that the expression $f(n) = BB_{\alpha(n)}(n)$ defined by Scott Aaronson in his MathOverflow post ``Succinctly naming big numbers: ZFC versus Busy-Beaver''\Footnote{\url{https://mathoverflow.net/questions/34710/succinctly-naming-big-numbers-zfc-versus-busy-beaver?rq=1}} is definable in \GC. It also follows that the Feferman--Schutte ordinal $\Gamma_0$ can be expressed in \GC, meaning that \GC\ is to be considered ``impredicative''.\Footnote{The ordinal $\Gamma_0$ is supposed to be the exact limit of predicative reasoning, which means that predicative reasoning will be best described by a classical language, whose largest number contest therefore has a brute force strategy. Similar remarks apply to this case as to the case of classical set theory discussed in the next section.}
\end{example}

\subsection{Transfinite time and constructive set theory}
\label{subsectionDST}

Having reduced the concept of recursion to the concept of production rules, we can ask whether we can reduce the concept of production rules to something more fundamental. The answer becomes clearer if we ask: in what order do objects get added to a generated category? One could answer that this question is nonsensical, in which case the language of generated categories would be the most fundamental language. However, if we think that the question is meaningful, then we are led to a notion of time more general than finite time, i.e. infinite or ordinal time. This is because for an infinitary production rule like ``if $\phi(n)$ is true for every $n$, then $\forall n \; \phi(n)$ is true'', if each $\phi(n)$ is added to the category of true statements at a different point in time, then $\forall n \; \phi(n)$ can only be added after an infinite amount of time has passed. Ordinal arithmetic gives a way of quantifying infinite amounts of time.

So, adding objects to categories according to production rules is a computation that takes place over ordinal time. It is then perhaps natural to allow set theory based on transfinite time, as long as it does not break the descriptionalist principle that every mathematical object should be describable in principle. We define the universe of constructible sets $\LL$ as follows:

\begin{definition}
\label{definitionconstructibleuniverse}
~
\begin{itemize}
\item At the start of time $\LL$ is the empty set $\emptyset$.
\item At any given ordinal time $\alpha$, we denote the current value of $\LL$ by $L_\alpha$. The successor $L_{\alpha+1}$ is defined as
\[
L_{\alpha+1} = L_\alpha \cup \{ \{x\in L_\alpha : \phi^\alpha(x)\} : \phi\in \ST\},
\]
where $\phi^\alpha(x)$ denotes $\phi(x)$ with its predicates restricted to $L_\alpha$ (so e.g. if $\phi(x)$ is ``$\forall y \; y\in x$'', then $\phi^\alpha(x)$ would be ``$\forall y\in L_\alpha \; y\in x$'').
\item The universe continues expanding without limit as long as the principle that every object must be describable can be adhered to.
\end{itemize}
\end{definition}

Note that each set $\{x\in L_\alpha : \phi^\alpha(x)\}$ can be described perfectly in terms of the formula $\phi$ and the ordinal $\alpha$. Thus, the universe will continue to expand for as long as it is possible to precisely describe ordinals.

It does not make sense to universally quantify over the entire universe of constructible sets, because the amount to which it can expand is defined vaguely. However, it makes sense to existentially quantify over it, since asserting the existence of an ordinal is the same as claiming to have a way of identifying the ordinal. Thus, we define the language of gradualist descriptivist set theory \GDST\ in an analogous manner to gradualist number theory \GNT\ (see Appendix \ref{appendixPT} for details), but quantifying over sets rather than over numbers.

The strength of the language \GDST\ depends on exactly how it is interpreted: which ordinals are describable? One answer is ``the ordinals that can be proven to exist in Kripke--Platek set theory'', and then \GDST\ is weaker than the language of generated categories \GC.

However, with a more powerful axiom system, one can prove that \GDST\ is capable of reproducing the classicization of the language of generated categories \C(\GC). For example, the axiom ``there is an ordinal that cannot be referred to by a term of \GDST'' is sufficiently strong.\Footnote{Indeed, if $f$ is any term in \GDST\ that refers to a method of associating to an ordinal a subset of $\N$, then for each $n$ we can consider the term `$\fst\{\alpha : n\in f(\alpha+1)\}$': by hypothesis, the ordinal $\beta$ coming from our axiom is not equal to this ordinal for any $n$. It follows that for all $n$, either $n\notin f(\alpha+1)$ or $n\in f(\beta)$ for some $\beta\leq \alpha$. So if $f$ is monotone increasing, then $f(\alpha) = f(\alpha+1)$. This implies that \GDST\ is capable of reproducing \C(\GC).}

\section{Set realism}
\label{sectionsetrealism}

The ``official'' philosophy of the mathematical community, to the extent that there is one, is \emph{set realism}, the view that it is possible to talk precisely about ``sets'', where the word `set' is understood in at least one of its two classical senses. This philosophy is enshrined in the convention that a properly ``mathematical'' proof is one that can be formalized in the axiom system \ZFC\ (Zermelo--Fraenkel set theory with Choice), which is the standard axiomatization of the language of classical set theory \ST. However, most mathematicians are not very aware of the philosophical arguments for and against set realism, and perhaps the primary reason for the widespread use of \ZFC\ is simply the largeness of its class of syntactically provable \statements\ -- many mathematicians are pragmatists or formalists, and if we are not interested in philosophical truth and we are just manipulating symbols, then we might as well allow the broadest possible rules for symbol manipulation, as long as the system doesn't become uninteresting due to the discovery of an inconsistency.

There are two different classical ways of interpreting the word `set', corresponding two different types of set realism:\\

{\it The naive view.} One point of view is that the intuitive notion of a ``set'' or ``collection'' is already precise enough that the phrase `the category of all sets whose members are sets, members of members are sets, and so on' uniquely identifies a category of objects, the ``universe of (hereditary) sets''. According to this view, \statements\ in \ST\ are thought of as talking about and quantifying over the members of this category.

A key point brought up in favor of this view is the fact that we ordinarily consider two different lists of objects to be equivalent if they agree on the question of which objects are members of the list. For example, the list `Alice and Bob' is considered to be equivalent to the list `Bob and Alice' even though the word order is different. It is then claimed that this is evidence that we believe that there is a pre-existing object (a ``collection'') that the lists `Alice and Bob' and `Bob and Alice' both identify.

A criticism of this view is that the fact that two terms are considered equivalent does not necessarily imply that there is some pre-existing object that we are thinking of them as identifying. In fact, the fact that we think of their equivalence in terms of the members of the lists rather than in terms of the two lists' identifying a single common object suggests that perhaps there is not such an object.

Another criticism of the naive view is the well-known Russell's paradox. Although it is usually conceived of as a mathematical argument yielding a contradiction in a certain axiom system (known as \emph{naive set theory}), its relevance to philosophy is that there is no obvious reason why `the set of all sets that do not contain themselves' is not a valid description of a set under the intuitive notion of a set. Thus the paradox appears to show that the intuitive notion of a set as a pre-existing object (assuming that there is such an intuitive notion) is inconsistent, and in particular is not a precise notion.

Two ways a set realist can respond to this are (a) to claim that their intuition does not suggest that there is a set of all sets that do not contain themselves or (b) to suggest that there should be a ``limiting principle'' used in conjunction with intuition, for example John von Neumann's criterion that ``A [class] is too large [to be a set] [if and only if] it can be put in one-to-one correspondence with the [class] of all sets'' \cite[footnote 6]{Maddy1}. It seems that there is not much to say about (a); regarding (b), it seems problematic that this ``limiting principle'' was not suggested by intuition but merely by the discovery of paradoxes. In fact, the idea that infinite sets and ``classes'' have ``sizes'' that can be compared by checking for the existence of one-to-one correspondences is itself a modern idea that appears to have only a meager basis in intuition.

Another objection is that the distinction between sets and ``classes'' is conceptually incoherent since ``classes'' seem to have all of the properties we would intuitively expect sets to have. One response to this is that the notion of a ``class'' corresponds to the notion of a \emph{logical} set rather than the notion of a classical set; that is, it is simply a description of a criterion for distinguishing between sets abstracted away from the details of that description. Another possible response is to deny that the notion of ``classes'' is legitimate and to use other heuristics for thinking about the ``size'' of sets rather than von Neumann's.

A final objection is that in order to precisely talk about a category of sets, it is first necessary to precisely define what sorts of objects can be members of those sets. The classical way of answering this is to restrict attention to the class of \emph{hereditary} sets, i.e. sets whose members are sets, whose members are sets, all the way down. However, one might respond that this leads to circularity: the concept of a hereditary set is only well-defined to the extent that the category of possible members of a hereditary set, i.e. the category of hereditary sets, is well-defined.\\

{\it The iterative picture.} In part due to the difficulties outlined above, many set realists today prefer a different conception of the universe of sets, which can be described as follows:

\begin{intpict}[Iterative universe of sets, classical conception]
\label{intclassST}
~
\begin{itemize}
\item[1.] The construction of the universe of sets occurs in ``stages''. At each stage, there is a well-defined collection of sets that exist at that stage, and each set that exists at a stage also exists at all later stages. The collection of all sets that exist at stage $\alpha$ is usually denoted $V_\alpha$.
\item[2.] In the first (or 0th) stage, no sets exist, i.e. $V_0$ is defined to be the empty collection.
\item[3.] After we have reached stage $\alpha$ and consequently defined a collection $V_\alpha$, when passing to the next stage $\alpha+1$ we add to $V_\alpha$ all subsets of $V_\alpha$, i.e. $V_{\alpha+1}$ is defined to be the collection of all subsets of $V_\alpha$.
\item[4.] The process is endless in a strong way: after infinitely many stages have passed, there is another stage after that, and so on: no phenomenon (such as ``infinitely many stages having passed'') is enough to make the process stop. (A process which continues after infinitely many stages have passed is called a \emph{transfinite process}.) The collection of all sets that exist at a ``limit stage'' $\alpha$ is simply the union of the collection of sets that have existed before that stage, i.e. $V_\alpha$ is defined to be the collection of sets that are in some collection of the form $V_\beta$, where $\beta$ is a stage preceding $\alpha$.
\end{itemize}
\end{intpict}

This picture depends on two key concepts: the notion of a ``subset'' and the notion of an ``endless transfinite process''. It may be compared with the picture of the constructible universe given in \6\ref{subsectionDST}, in which each of these concepts is modified somewhat (specifically, ``subset'' becomes  ``logical subset'' and ``endless transfinite process'' becomes ``transfinite process that goes on as long as it can remain descriptionalist'').

The notion of a ``subset'' appears to be at least somewhat more coherent than the naive notion of a ``set''. In particular, it is exempt from criticism based on Russell's paradox, on the incoherence of the distinction between sets and classes, or on the necessity to precisely define a category of objects before one can precisely talk about sets that contain objects from this category. However, one may still object that the intuitive notion of a set or subset is usually formed by example from things like lists of objects or descriptions of categories of objects, but according to standard set realism neither of those things forms a prototypical example of a set.\Footnote{For example, a set realist would normally hold that only ``countably many'' sets can be described.} To put it another way, one might define a subset of a category to be a way of choosing some members of the category for inclusion and others for exclusion, but it is not clear that we can talk about ``choosing'' independent of our methods for making choices, and our methods for making precise choices generally involve descriptions or algorithms of some sort.

Regarding the notion of an ``endless transfinite process'', the obvious criticism is that the intuitive notion of a ``process'' clearly excludes the idea that there could be a new stage after infinitely many stages have passed, so that the notion of a ``transfinite process'' already does not make sense. After all, there is no point in time when infinitely many stages will have passed; infinity is only an abstraction. The main responses to this criticism seem to be: firstly, that dealing with transfinite processes becomes intuitive after you deal with them for a while; and secondly, that many parts of the theory of transfinite processes can be encoded in mathematics which is less controversial (see in particular \6\ref{subsectionDST} above). Note that it is possible to accept the notion of a transfinite process while rejecting the notion of an abstract subset, see \6\ref{subsectionDST}.

In addition, in this paper \ego\ will present a further criticism of the idea of an \emph{endless} process: not that such a process is nonsensical, but that \emph{universal quantification over the outputs of such a process is imprecise or nonsensical}. The reason for this is that at any given point in time, the process is only partially complete and so it makes no sense to quantify over all objects that the process will eventually produce, only the ones that it has already produced. It is possible for a set realist to accept this criticism while remaning a set realist, leading to a view \ego\ will call \emph{gradualist set realism}. However, accepting the criticism leads to the conclusion that the language of classical set theory \ST\ (with unbounded quantifiers) does not have precise semantics.

Finally, let us briefly mention one argument used in favor of set realism which does not itself specify a semantics to be used in the interpretation of \ST: the \emph{indispensability argument} which alleges that since modern mathematics is indispensable for modern science, which can be empirically verified, modern mathematics and in particular set theory has been empirically verified. However, there is little reason to think that this is anything more than a historical accident. Namely, the fact that mathematicians make sure that their arguments can be formalized in \ZFC\ is not necessarily the basis of the conceptual validity of their arguments, which are ordinarily conceived through intuitive reasoning. Verifying that arguments can be formalized in \ZFC, though useful, may be nothing more than a double-check. In particular, there seems to be a consensus among mathematical logicians that a great deal of mathematical reasoning can be formalized in languages and axiom systems much weaker than \ST\ and \ZFC. For example, Harvey Friedman's ``grand conjecture'' claims that ``Every theorem published in the Annals of Mathematics [that can be translated into \NT] can be proved in \EFA''.\Footnote{\url{http://cs.nyu.edu/pipermail/fom/1999-April/003014.html}. In support of his conjecture, Friedman claims that ``in every case where a logician has taken the time to learn the proofs, that logician also [has seen] how to prove the theorem in Peano Arithmetic.''} Here \EFA\ denotes ``Elementary Function Arithmetic'', an axiomatization of \NT\ that is much weaker than the standard axiomatization \PA. Simlarly, Solomon Feferman has claimed that  ``surprisingly meager (in the proof-theoretical sense) predicatively justified systems suffice for the direct formalization of almost all, if not all, scientifically applicable mathematics'' \cite[p.443]{Feferman5}, and Geoffrey Hellman has said that ``whether, indeed, there are \emph{any} serious limitations to predicativist mathematics' power to recover scientifically applicable mathematics remains an important open question'' \cite[p.462]{Hellman}. Here, \emph{predicativism} is a type of number realism that allows for fairly powerful languages and axiom systems.\\

{\it Strongest axiom contest for the language of set theory \ST.} The obvious starting point for the strongest axiom contest for \ST\ is its standard axiomatization \ZFC. For an example of a philosophical defense of \ZFC\ from a set realist perspective, see \cite{Maddy1}. Essentially, \ZFC\ formalizes many common intuitions about the ``universe of sets'', as described either by the naive view or in the iterative picture. However, \ZFC\ does not capture all possible intuitions about set theory, and there is no particular reason for set realists to limit themselves to \ZFC\ rather than trying to axiomatize other intuitions. In particular, in the iterative picture the phrase `all subsets' in (3), and the phrase `no phenomenon is enough to make the process stop' in (4), can always generate more intuitions, which we can then attempt to formalize. 

A straightforward example is the \emph{axiom of inaccessible cardinals}. Roughly speaking, this axiom says that there is some stage $\alpha$ we can stop at such that the resulting universe $V_\alpha$ is closed under the rules for producing sets described by the axiom system \ZFC.\Footnote{The phrase `rules for producing sets described by the axiom system \ZFC' is not taken literally, since the axiom schemas of \ZFC\ (separation and replacement) can only be instantiated for definable classes, and the production rules used to define inaccessible cardinals are taken to apply to all classes (i.e. subsets of $V_\alpha$), regardless of whether or not they are definable.} This axiom can be thought of as a formalization of the principle that ``no phenomenon is enough to make the process stop'', since it hypothesizes that the phenomenon of having used all of the production rules of \ZFC\ as much as we possibly can is not enough to make the process stop.

Though there are more \emph{large cardinal axioms} for which it is possible to give a strong intuitive argument or ``intrinsic justification'' using set realism as a starting assumption,\Footnote{Opinion varies on exactly which axioms can count as ``intrinsically justified''. Some say that axioms as strong as the axiom of remarkable cardinals are justified by intuition \cite[Theorem 3.3]{McCallum}, while \ego\ do not really see how even the axiom of Mahlo cardinals can be made plausible by appealing to intuition (the argument given in \cite{McCallum} is not very convincing since the number of possible values of a second-order variable in $V_\kappa$ increases dramatically as $\kappa$ increases, so there is no clear reason why we would expect anything to stabilize over all possible values of such a variable).} eventually we get to axioms which philosophers have attempted to give ``extrinsic justifications'' based on data other than intuition.\Footnote{Though popular, the labels `intrinsic' and `extrinsic' seem wrong to \me. Namely, human intuition is not something that is intrinsic to any language; it is conceptually independent. In other words, to define a language, it is not necessary to define all possible intuitions that one may have about the language.} For example, when attempting to justify an axiom philosophers may cite as evidence ``confirmation by instances, prediction, providing new proofs of old theorems, unifying new results with old, extending patterns begun in weaker theories, providing powerful new ways of solving old problems, filling a gap in a previously conjectured `false, but natural' proof, explanatory power, and intertheoretic connections'' \cite[\6VII]{Maddy2}.\Footnote{It is interesting to note that almost all of the items on this list are the kind of evidence used by \emph{abductive reasoning}, or reasoning from consequences. If we view intuitive arguments as \emph{deductive reasoning}, then it appears that what is missing is \emph{inductive reasoning}, or reasoning based on expectations. While inductive reasoning is often used in mathematics in the form of probabilistic heuristic arguments (such as the standard argument justifying the conjecture that $\sqrt 2$ has equal proportions of all digits in its decimal expansion), it is interesting to ask whether there are any large cardinal axioms that can be made more plausible by inductive reasoning.}
 While such considerations are certainly relevant in any discussion of the epistemology of mathematics, there does seem to be a relatively clear conceptual distinction between intuitively plausible axioms and hypotheses made plausible in other ways, though in any given case it may be unclear where the line lies. Note that large cardinal axioms whose standard justifications are ``extrinsic'' tend to be far more powerful than those whose standard justifications are intuitive. It appears to be an interesting question whether the same will hold for number realist languages.

Large cardinal axioms, i.e. axioms that hypothesize the existence of stages in the iterative picture with certain properties, are conventionally ordered according to their \emph{consistency strength}. An axiom system $A$ is said to be \emph{consistent} if no claim of the form $P\wedge\neg P$ (i.e. ``the claim $P$ is both true and false'', a \emph{contradiction}) is syntactically provable in $A$. The notion of consistency is particularly important in mathematics because it can be formalized in fairly weak languages such as \NT.\Footnote{From listening to some mathematicians, one could get the impression that this formalizability property means that syntactic consistency is a more primitive notion than truth. One of the goals of this paper is to argue the opposite: that languages and their semantics are more conceptually primitive than axiom systems and their corresponding notions of syntactic provability.} Since no \statement\ can be both true and false, every valid axiom system is consistent, but the converse is not true. The axiom system $A_1$ is said to have \emph{greater\Footnote{In mathematics the word `greater' is often used in an inclusive sense, i.e. as shorthand for `greater or equal'.} consistency strength} than the axiom system $A_2$ if the consistency of $A_2$ can be proven in some ``neutral'' axiom system (known as the \emph{metatheory}) using the hypothesis that $A_1$ is consistent. The selection of a metatheory is important because otherwise all consistent axiom systems would be equally consistent.\Footnote{Axiom systems could still be ranked according to how plausible the philosophical arguments for their consistency are, but the idea of such a ranking has a decidedly non-mathematical flavor. Reference to a metatheory is also important because of \my\ view that sentences of the form `if $P$, then $Q$' can only be thought of as claims or \statements\ (rather than merely intelligible utterances) to the extent that modes of reasoning with respect to which it is being claimed that $Q$ can be inferred from $P$ can be understood from context. This view will be explained further in Appendix \ref{appendiximplication}.} There is nothing to rule out choosing one of the systems $A_1$ or $A_2$ as the metatheory, but usually a ``weaker'' axiom system such as \PA\ or even \PRA\ (Primitive Recursive Arithmetic) is chosen. Note that the ordering of axiom systems by consistency strength is essentially the reverse of the ordering by plausibility: an axiom system $A_1$ with greater consistency strength tends to be less plausible than an axiom system $A_2$ with lesser consistency strength, since if both $A_1$ and the ``neutral'' axiom system are valid, then $A_2$ is consistent and therefore plausibly valid; so the plausibility of $A_1$ feeds into the plausibility of $A_2$ but not vice-versa.

However, the ordering of axiom systems by consistency strength is not the appropriate one to make comparisons between their corresponding axiomatic strategies in the Busy Beaver contest and the largest number contests of selfmeta languages, according to Corollaries \ref{corollaryaxiomstrat} and \ref{corollaryselfmeta}. In fact, no axiomatization of \ST\ is stronger than any other according to the criterion specified there, since the language \ST\ is incapable of reproducing its own metalanguage, and thus if $A_1$ and $A_2$ are axiomatizations of \ST, it is not even possible to ask the question of whether $A_1$ is stronger than the metasystem $A_2 + 1$, since these axiom systems are not formulated in the same language. This is a hint that the classical language \ST\ is not a good language for metamathematics for any form of set realism, a point which \ego\ will argue further below. For now, let us note that a partial response based on classical mathematics is that a slightly stronger notion than consistency suffices to compare axiom systems well enough to determine which axiomatic strategies will do better at the Busy Beaver contest, namely the notion of \emph{$\omega$-consistency}. The notion of $\omega$-consistency depends on the notion of an $\omega$-syntactic proof, which is essentially a syntactic proof in which ``infinite but countable'' chains of reasoning are allowed. An axiom system is said to be $\omega$-consistent if no contradiction is $\omega$-syntactically provable. We have the following:
\begin{theorem}
\label{theoremaxiomstrat3}
Let $A_1$ and $A_2$ be two $\omega$-consistent axiomatizations of \ST, and suppose that $A_1$ can prove that $A_2$ is $\omega$-consistent. Then the axiomatic strategy of $A_1$ in the Busy Beaver contest wins against the axiomatic strategy of $A_2$.
\end{theorem}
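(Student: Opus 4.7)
The plan is to convert $A_1$'s proof of the $\omega$-consistency of $A_2$ into a short $A_1$-proof that a minor modification of Bob's algorithm halts with an output strictly greater than Bob's. Let $\gamma$ be the algorithm which runs $\double{\alpha_{A_2}}{10^{100}}$ and then adds $1$ to its output. It suffices to show that $A_1$ proves ``$\gamma$ halts'' using well under $10^{100}$ symbols, because then $\gamma$ is one of the algorithms considered in the definition of $\double{f_{A_1}}{10^{100}}$ and consequently contributes at least $\double{f_{A_2}}{10^{100}} + 1$ to that maximum.

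The key step is to work inside $A_1$ and derive ``$\double{\alpha_{A_2}}{10^{100}}$ halts'' as follows. From the hypothesis, $A_1$ proves that $A_2$ is $\omega$-consistent; a short and standard argument, formalizable already in weak fragments of \NT\ and hence certainly in \ST, then yields the $\Sigma^0_1$-soundness of $A_2$, i.e.\ every halting statement provable in $A_2$ is in fact true. Next, unfold the definition of $\double{\alpha_{A_2}}{10^{100}}$ from Theorem \ref{theoremaxiomstrat}: it enumerates the finitely many $A_2$-proofs of length at most $10^{100}$ whose conclusion has the form ``$\beta$ halts'', runs each of the corresponding $\beta$'s, and returns the maximum output. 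Each such $\beta$ really does halt by the $\Sigma^0_1$-soundness just derived, and the enumeration is finite, so $\double{\alpha_{A_2}}{10^{100}}$ terminates, and hence so does $\gamma$. Crucially, $10^{100}$ enters this reasoning only as a numeral of length $O(\log 10^{100})$, so the whole derivation can be coded in $A_1$ using a few thousand symbols, far below the $10^{100}$ budget.

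Having produced such an $A_1$-proof, $\gamma$ is eligible to contribute to $\double{f_{A_1}}{10^{100}}$, and its output is exactly $\double{f_{A_2}}{10^{100}} + 1$, so
\[
\double{f_{A_1}}{10^{100}} \;\geq\; \double{f_{A_2}}{10^{100}} + 1 \;>\; \double{f_{A_2}}{10^{100}}.
\]
Since Alice's entry $\double{\alpha_{A_1}}{10^{100}}$ outputs $\double{f_{A_1}}{10^{100}}$ and Bob's entry $\double{\alpha_{A_2}}{10^{100}}$ outputs $\double{f_{A_2}}{10^{100}}$ (both well-defined thanks to the $\omega$-consistency of $A_1$ and $A_2$, since $\omega$-consistency ensures that the algorithms inspected by the enumeration actually halt), Alice wins the Busy Beaver contest.

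The main obstacle is the length-bookkeeping in the middle paragraph: one must verify that the formal $A_1$-derivation of ``$\omega$-Con($A_2$) implies $\double{\alpha_{A_2}}{10^{100}}$ halts'' genuinely fits well within $10^{100}$ symbols. This is a routine but tedious exercise paralleling the length analyses already carried out in the proofs of Theorems \ref{theoremaxiomstrat} and \ref{theoremrecstrat2}; because the constants involved are many orders of magnitude below $10^{100}$, there is ample slack.
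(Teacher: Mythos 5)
Your argument is correct, but it is worth knowing that the paper never actually writes out a proof of this theorem: the appendix proves only the metasystem analogue, Corollary \ref{corollaryaxiomstratB}, whose proof runs ``the halting of $\alpha_{A_2}\{n\}$ is provable in $A_2+1$, hence in $A_1$, in $O(\log n)$ symbols, so its output occurs as a term in the maximum computed by $\alpha_{A_1}\{m\}$.'' What you have written is the natural adaptation of that template to the stated hypothesis: where the corollary gets the reflection step for free from the inference rule \eqref{provabletrue} of the metasystem, you derive it inside $A_1$ from the $\omega$-consistency of $A_2$ via the standard lemma that $\omega$-consistency implies $\Sigma^0_1$-soundness (provably so, using the formalized $\Sigma_1$-completeness of $A_2$); that lemma is exactly the missing bridge, and it works equally well for the paper's $\omega$-rule formulation of $\omega$-consistency as for G\"odel's. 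Two small remarks. First, your explicit ``add $1$'' wrapper actually repairs a slight sloppiness in the paper's own proof of Corollary \ref{corollaryaxiomstratB}, which as written only yields a non-strict inequality. Second, your estimate that the whole derivation fits in ``a few thousand symbols'' tacitly assumes that the $A_1$-proof of the $\omega$-consistency of $A_2$ is itself short; the hypothesis bounds nothing, so strictly speaking the conclusion holds only when that proof plus the $O(\log 10^{100})$ overhead stays under the $10^{100}$ budget --- the same physical-size caveat the paper attaches parenthetically to Corollary \ref{corollaryaxiomstrat}, so this is consistent with the paper's conventions but deserves a sentence.
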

In \my\ opinion, this theorem is not very natural because the notion of $\omega$-consistency is not conceptually simple (in comparison to the notion of a language that has the ability to talk about another language's semantics), and because it only applies to the Busy Beaver contest and not to the largest number contests of selfmeta languages, for which other concepts would be needed. However, it does allow the comparison of the axiomatic strategies of various large cardinal axioms. \ego\ leave as an exercise to mathematical logicians the question of whether the standard ordering of large cardinal axioms according to consistency strength is the same as the ordering according to the strength of their axiomatic strategies in the Busy Beaver contest as well as in other contests possessing axiomatic strategies.\\

{\it Largest number contest run by set realists.} Even if we are satisfied with the view that in the end, $\omega$-consistency or some other constructed criterion is to be used for judging the strength of axiom systems rather than their raw ability to formalize reasoning about each other, when we consider the largest \emph{number} contest rather than the largest \emph{axiom} contest, things get more difficult for a semi-naive version of set realism. This is because the most obvious way for a semi-naive set realist to make precise \his\ largest number contest is to reduce it to the largest number contest of the language \ST. But this contest has a brute force strategy! To highlight the absurdity, we ask the set realist if \he\ is really going to reject the entry `the number referred to by the term $\double{\phi_\ST}{10^{100}}$, where $\phi_\ST$ is as in Theorem \ref{theoremrecstrat}' to their contest on the grounds that it is not technically a term of \ST. This is true, but surely the entry is almost as good as a term of \ST, since it can be converted into one given enough time and computational power. Moreover, ordinary mathematical discourse is not directly framed in \ST, but the idea is that there is some method, often partly algorithmic, via which this discourse can be translated into \ST. So the idea that an algorithm that can generate a term of \ST\ is almost as good as a term of \ST\ seems to be embedded in the very idea of interpreting standard mathematical reasoning as reasoning in \ST.

Once the set realist admits that the informal description `my entry is the term $\double{\phi_\ST}{10^{100}}$' is a valid entry, things really start to get interesting. If \he\ allows the use of an algorithm for generating terms of \ST, then \he\ has to address the question of what kind of algorithms are allowed. \He\ could of course require the algorithms to belong to a small class like the class of primitive recursive algorithms, but this would be somewhat arbitrary and would also yield a degenerate largest number contest. Similar remarks apply to the class of algorithms that can be proven to terminate in some fixed axiom system like \ZFC. A better option is to require only a semantic proof that the algorithm described in an entry halts, such as a philosophical argument for some large cardinal axiom together with the stipulation that the axiomatic strategy for the Busy Beaver contest should be used. This would at least gets \him\ away from the realm of degenerate largest number contests and into the realm where axiomatic strategies are appropriate.

However, stopping at algorithms seems to indicate that the key thing about them is the fact that they \emph{can be used to compute} strings in \ST, rather than just the fact that they \emph{uniquely identify} strings in \ST. Such an emphasis seems a little inappropriate for set realists, who are usually uninterested in whether objects can be computed as long as they can be proven to exist. Thus, it seems likely that many set realists will take the point of view that any description that uniquely identifies a string in \ST\ is almost as good as the string it identifies, and that talking about ``the number named by the \ST-term named by the \ST-term $t$'' (where $t$ is a concrete \ST-term) is just as precise as talking about ``the number named by the \ST-term $t$''.

These considerations suggest that a set realist who accepts \ST\ should also accept its metalanguage $\ST+1$, which is capable of transcribing the above terms. Applying the same argument, the set realist should also accept $\ST+2$, and so on, all the way up to the Kripke extension $\K(\ST)$. This reinforces our earlier conclusion in Section \ref{sectionkripke} that any philosopher who accepts the language \ST\ should also accept the language \K(\ST).

However, a set realist need not accept \ST\ as a precise language at all. \He\ may instead say that this analysis shows that the idea of allowing arbitrary quantification over the outputs of a so-called ``endless'' process is incoherent. In other words, whenever we start to talk about the objects that have been created by the process up to some point, then our ability to meaningfully talk about these objects depends on the existence of larger objects delineating truths from falsehoods, meaning that the process continues past the collection of objects that we are currently talking about. In this view, which we will call \emph{gradualist set realism}, it is incoherent to claim that ``for all sets $x$ and $y$, there is a set $\{x,y\}$'', since the truth of such a \statement\ would depend on facts about all sets, and at no time do all sets exist, so at no time can something be true about all of them. Instead, the Axiom of Pair should be thought of merely as a description of a process for creating sets from other sets, but not as a claim that would have been precise if we didn't already know that the process exists.

It turns out to be possible use the principles of gradualist set realism to define a language similar to \ST, which we will denote by \GST\ (for \emph{gradualist set theory}), which does not allow unbounded universal quantification; see Appendix \ref{appendixPT} for details. Bounded universal quantification is allowed, because the quantification ranges over the members of a set which already exists, so that it makes sense to talk about all of the members of the set simultaneously. Note that the language \GST\ is selfmeta in the sense of \6\ref{sectionkripke}.

Although the language \GST\ initially appears to be incapable of formalizing classical mathematics (which uses reasoning that is supposed to be formalizable in \ZFC), it can in fact reproduce a language with respect to which \ZFC\ is usually presumed to be valid: the language with the syntax of \ST\ whose semantics is given by interpreting the word `set' to range over all members of $V_\kappa$, where $\kappa$ is a concretely specified inaccessible cardinal (for example, the first inaccessible cardinal). we will denote this language by $\ST_\kappa$.\Footnote{Note that gradualist set realism appears to be incompatible with classical set realism, since the languages $\ST_\kappa$ all have different semantics and so it appears that the language \ST\ does not have any canonical semantics.} Note that \GST\ must have primitive language for talking about large cardinals with a specified property, such as the first inaccessible cardinal, in order for this to work.

Since \GST\ is selfmeta, there are no obvious philosophical arguments showing that someone who accepts that \GST\ has precise semantics should also accept that some stronger language has precise semantics. Namely, the argument that anyone who accepts a language should also accept its metalanguage is irrelevant, because someone who accepts \GST\ has already accepted the metalanguage of \GST, since it is reproducible in \GST. Thus, \GST\ can be viewed as an ``ultimate ontology'', and can thereby used to convert the imprecise ``philosophical largest number contest'' into a precise largest number contest, namely the largest number contest of \GST\ (cf. Contest Definition \ref{lnc1}).

The obvious strategy for this largest number contest is the \emph{linguistic strategy} mentioned just before Theorem \ref{theoremlingstrat}: first describe a large cardinal $\kappa$ using \GST, and then use the brute force strategy on $\ST_\kappa$. In order for this to work we need to make sure that our description of $\kappa$ in \GST\ is in fact a valid description, and for this purpose we need to make some kind of argument. Arguments tend to be best made in some axiom system, which we can expect to be an axiomatization of \GST.

Theorem \ref{theoremselfmeta} shows that the Philosophical Largest Number Contest for gradualist set realists reduces to the Philosophical Strongest Axiom Contest. At this point it will be useful to revisit our analysis of the strongest axiom contest for set realists, this time under the gradualist assumption. Since \GST\ is selfmeta, it is legitimate to compare large cardinal axioms on the basis of whether one axiom system is more powerful than the metasystem of another axiom system. However, there is the issue that not all large cardinal axioms can be translated into \GST, since some are stated using unbounded quantifiers. Though more powerful large cardinal axioms tend to be more likely to use unbounded quantifiers, this relationship is not straightforward and the strongest large cardinal axiom not currently known to be inconsistent with \ZFC, i.e. the rank-into-rank Axiom {\bf I0}, is formulated using only bounded quantifiers.

If $\phi$ is a large cardinal axiom formulated using unbounded quantifiers, then it is still possible to translate into \GST\ the claim that $\phi$ is true in some universe: namely, this claim can be translated as `$\exists \kappa \text{ inaccessible such that $\phi^\kappa$ holds}$', where $\phi^\kappa$ denotes the result of replacing all quantifiers in $\phi$ by quantifiers over the universe $V_\kappa$. We will call this claim the ``existence of a universe satisfying $\phi$''. Thus, in addition to comparing the strength of various large cardinal axioms with bounded quantifiers, one may also compare the strength of the assumption that there exists a universe satisfying a given large cardinal axiom with unbounded quantifiers. Note that if $\phi$ is an axiom formulated in \GST, then the existence of a universe satisfying $\phi$ is a stronger claim than $\phi$; in fact, it is stronger than the metasystem of $\phi$, since \GST\ can talk about the notion of truth within any fixed universe. So to show that an axiom system $A_1$ is stronger than the metasystem of another axiom system $A_2$, it suffices to show that $A_1$ can prove the existence of a universe satisfying $A_2$.

Again, we leave the details of exactly which large cardinal axioms are the strongest according to this criterion as an exercise for the interested mathematician. (We expect that the axiom systems we give below for dealing with truth should suffice to answer this question, so the question can be attacked from a purely formalist perspective.)




\ignore{

\begin{remark}
This way of thinking makes it completely unsurprising that, in the words of Wikipedia, ``There is no generally agreed precise definition of what a large cardinal property is'' -- if there was such a precise definition, it could probably be made into a large cardinal property itself.
\end{remark}

\begin{example}
Consider the following program to output a really big number:\\
/\;\; Let $\alpha = \emptyset$;\\
/\;\; LABEL 1;\\
/\;\; If $|\alpha| < 10^{10}$(\\
/\;\;\;\; Let $\alpha = \text{``Compute $T(\alpha)$, then add $1$''}$;\\
/\;\;\;\; GOTO 1;)\\
/\;\; Else(\\
/\;\;\;\; For $n = 1,\ldots,10^{10} - 10,000$(\\
/\;\;\;\;\;\; For $\beta$ of length $n$(\\
/\;\;\;\;\;\;\;\; Allocate Graham's number of cycles to compute $T(\beta)$, and return the result as $\gamma$;\\
/\;\;\;\;\;\;\;\; If $\gamma = \alpha$(\\
/\;\;\;\;\;\;\;\;\;\; Let $\alpha = \text{``Compute $T(T(\beta))$''}$;\\
/\;\;\;\;\;\;\;\;\;\; GOTO 1;)\\
/\;\;\;\;\;\;\;\; )));\\
/\;\;Return $T(\alpha)$\\

Let's call this program ``Solomonoff induction''. It might do really well or really badly, depending on circumstances -- it's not clear. But the funny thing is that \emph{computations like \eqref{solomonoff} seem to be more or less an attempt to imitate Solomonoff induction ourselves} -- with a bit more cleverness in that we have the idea of ``a trick which worked already will work again'' that Solomonoff induction doesn't have -- nevertheless, this trick idea is not actually always true, so it makes sense that a computer should not implement it.

Also, it seems to make a big difference whether the computer thinks of the solution
\[
\text{Add $10^10$ ones, then add $10^{10} - 100$ ones, then add $10^{10} - 200$ ones, then add...}
\]
or
\[
\text{Add ones until the string length is $10^{10}$, then add ones until the string length is $10^{10}$, then add...}
\]
first, and there doesn't seem to be much opportunity to correct later: suppose that each ``add $X$ ones'' has a different way of expressing $X$, as an arbitrary consequence of how fast the code can be collapsed. E.g. $10^{10}$ can be written just ``$10^{10}$, but other numbers will need explicit decimal or binary expansions. The way we set it up, Solomonoff basically has to predict what the pattern behind these are in order to get anywhere, which is... backwards.

Are we subject to the same limitation somewhere, in a way which we do not see?

Conclusion: Although Solomonoff induction sounds good theoretically, it doesn't capture all parts of human intuition, such as: There are abstract ``numbers'', and we can write them however we want, in particular we can write them in whichever way will make there be a pattern. And: The information needs to be stored in a way that doesn't depend too sensitively on the arbitrary cutoff. Is it possible to solve these problems? It seems sort of like creating an AI. One suggestion would be to tell the computer that it has to use a specific symbol to represent the number $10^{10}$, and that when the program is executed on slightly different values of that symbol, then slightly different length strings result (describing the ``rate of change'' of $\alpha$ in the last few steps before we started the induction step). Does this work? It's a technical question... One big problem is that you may get something like:
\begin{quote}
``Iterate $(x x)$ a thousand times, then iterate $(x x)$ 1200 times, then iterate $(x x)$ 991 times, then apply $(x x)$ $(x x)$ $(x x)$ \ldots (a thousand times) $(x x)$ $xxxxxx\ldots$ (to $10^{10}$)''
\end{quote}
\end{example}

}

\ignore{
\section{Formalizing the entire paper}

Sorts used in this paper:
\begin{itemize}
\item categories
\item members of categories (objects)
\item languages
\item axiom systems
\item algorithms
\item utterances (terms, \statements, predicates, definitions, utterances)
\item numbers
\end{itemize}
Non-mathematical:
\begin{itemize}
\item humans
\item thought processes / reasoning
\item concepts
\end{itemize}
Things that they can do:
\begin{itemize}
\item terms can denote objects
\item humans can say, give, write
\end{itemize}
Things that they can be:
\begin{itemize}
\item \statements\ can be true or false, precise or imprecise, coherent or incoherent
\item languages and utterances can be precise or imprecise
\item categories can be complete or incomplete
\item reasoning can be valid/precise or invalid/imprecise
\end{itemize}
This is not very informative... More useful are the axioms:
\begin{itemize}
\item Let $C$ be a complete category and let $P$ be a property. Then the phrase `all members of $C$ have property $P$' is true if and only if all members of $C$ have property $P$.
\item Let $C$ be a category and let $P$ be a property. Then the phrase `there exists a member of $C$ with property $P$' is true if and only if there exists a member of $C$ with property $P$.
\end{itemize}
}



\newpage
\appendix

\section{Proposition/term languages}
\label{appendixPT}

In the appendices, we will assume that the reader is somewhat familiar with the techniques and terminology of mathematical logic, and is capable of filling in missing details of arguments and definitions when they are routine or obvious.\Footnote{As a convenience we recall the technical distinction between \emph{terms} and \emph{formulas}, which is most easily described in terms of their semantics: once a variable assignment has been fixed, then terms are supposed to have \emph{referents} (which are mathematical objects), and formulas are supposed to have \emph{truth-values} (which are either `true' or `false'), although not all terms have referents and not all formulas have truth-values. Note that in the case of terms and formulas with no free variables, it is not necessary to fix a variable assignment before their referents and truth-values are (in principle) well-defined.} However, the languages we consider in this paper differ in several respects from the languages which are familiar to most mathematical logicians, namely those expressed in the standard (first-order, classical) predicate calculus. We summarize the main differences as follows:
\begin{itemize}
\item We use Kleene's (weak) ``three-valued'' logic rather than the classical two-valued logic: a formula may be either true, or false, or it may be \emph{indeterminate} i.e. neither true nor false. There are three different ways for a formula to be indeterminate, which will be described in more detail below. We will think of the truth-value of a formula as a ``dynamic'' quantity: an indeterminate formula may at any time become true or false, but a true or false formula can never become indeterminate or otherwise change its truth-value. Due to our use of three-valued logic, the axiom systems we consider will not contain the law of excluded middle, but they will contain suitable replacements. Note that despite the word ``three-valued'', in this system formulas have only two possible truth-values (`true' and `false'); the ``third value'' is the label `indeterminate' which is applied to formulas that do not have truth-values, but it is not itself a truth-value. Moreover, there is no ``axiom of assignment'' asserting that every formula either has one of the truth-values or is indeterminate, although evidently no formula can fail to take either truth-value while also failing to be indeterminate (since by definition any formula that fails to take either truth-value is indeterminate).
\item Due to the need for languages with terms referring to very large numbers, we consider languages with term templates more general than those allowed by the standard predicate calculus. For example, the language of number theory will have a rule for constructing terms of the form
\begin{equation}
\label{maxtemplate}
\max\{t(m) : m\leq n, \; \phi(m)\}
\end{equation}
where $t(m)$ is a term with $m$ as a free variable and $\phi(m)$ is a formula with $m$ as a free variable. (Such a term would not be allowed in the standard predicate calculus, in which only quantifiers can bind variables, not term operators, and in which formulas can contain terms but not vice-versa.) The class of term templates we consider is motivated by mathematical practice and by the principle that increasing the expressive power of terms should not increase the expressive power of the language as a whole.
\item Due to the generality of the term templates described above, it may sometimes happen that a term does not have a referent. In this case we consider any formula containing the term to be indeterminate.
\item We do not assume that the class of formulas is closed under the application of unbounded quantifiers: that is, our definition of the class of ``formulas'' will not include the usual rule stating that if $\phi(x)$ is a formula with $x$ as a free variable, then $\LQ\exists x \; \phi(x)$' and $\LQ\forall x \; \phi(x)$' are both formulas. This may take place in an asymmetric way: for example, a language may allow unbounded existential quantifiers and bounded universal quantifiers, but no unbounded universal quantifiers.
\item To handle the possible asymmetry described above, we do not assume that the class of formulas is closed under negation: that is, the usual formula template corresponding to negation may be restricted. Thus, we draw a distinction between formulas to which the negation operation can be validly applied and those to which it cannot be applied; we call these formulas ``negatable'' and ``unnegatable'' respectively.
\item Unnegatable formulas are always considered to be either true or indeterminate, but never false.
\item Like other formulas, atomic formulas may be either negatable or unnegatable. Regardless of whether or not it is negatable, an atomic formula may be indeterminate (under a given variable assignment) even if it does not contain any terms without referents, depending on the intended meaning of the formula.
\item Due to the need to avoid using set theory as a philosophical foundation of our theory of truth and reference,\Footnote{The problem with using set theory as a foundation for a theory of truth is that the set-theoretic definition of truth only makes sense for languages whose domains of discourse are sets, and set theory itself is not such a language. For set realists, largest number contests will tend to revolve around such languages, so a more robust theory of truth is needed.} we will have a somewhat more informal model theory, which we nevertheless hope can still be considered precise.
\item Due to the fact that we are interested in the approximate lengths of proofs, the precise definition of a proof is more important than it usually is in mathematical logic, where only the concept of provability is important. We will use a deduction system similar to natural deduction, but we also introduce a couple of ``shortcuts'' that correspond to standard mathematical practice, whose introduction changes the lengths of proofs without changing the class of provable \statements.
\end{itemize}

Thus, we define the class of languages we consider as follows:\Footnote{We do not claim that all of the languages considered in the body of the paper can be put into the form considered here; indeed, some of them, such as the language \GC\ described in \6\ref{subsectionGC}, cannot be.}

\begin{definition}
A \emph{proposition/term syntax} is a list of templates for creating terms and formulas, as described below. Terms and formulas created using the templates of a proposition/term syntax will be called \emph{valid} or \emph{well-formed} with respect to that syntax.
\begin{itemize}
\item[(1)] Each template of a proposition/term syntax is a string which includes zero or more ``placeholder'' terms, formulas, and variables, with the understanding that the template corresponds to a syntax rule allowing the formation of either a term or a formula by replacing the placeholder terms, formulas, and variables of the template by terms, formulas, and variables of the proposition/term syntax, if necessary. Terms and formulas are then said to be valid with respect to a syntax if they can be constructed using finitely many applications of these syntax rules. For example, all proposition/term syntaxes include the formula templates
\begin{align*}
&\LQ\phi \text{ and }\psi\RQ,&
&\LQ\phi \text{ or }\psi\RQ,
\end{align*}
in which $\phi$ and $\psi$ are placeholder formulas (cf. (2) below). They correspond to syntax rules of the form ``if $\phi$ and $\psi$ are valid formulas, then `$\phi \text{ and }\psi$' is a valid formula'' and ``if $\phi$ and $\psi$ are valid formulas, then `$\phi \text{ or }\psi$' is a valid formula'', respectively.
\item[(2)] There are three types of formula templates:
\begin{itemize}
\item[(2a)] The two {\bf primitive logical operators} are the binary operators `and' and `or', which we sometimes denote by `$\wedge$' and `$\vee$'. Note that `not' and `implies' are \emph{not} considered primitive logical operators.
\item[(2b)] {\bf Atomic formula templates} take the usual form $R(t_1,\ldots,t_n)$, where $R$ is a string called a \emph{primitive relation} and $t_1,\ldots,t_n$ are placeholder terms. If $R$ is a negatable primitive relation, then `$\neg R(t_1,\ldots,t_n)$' is also an atomic formula template, though `$\neg R$' is not considered to be a primitive relation. Here, the background assumption is that to specify a proposition/term syntax, it is necessary to specify exactly which primitive relations of the syntax are negatable.

The binary equality relation `$=$' and the unary existence relation `$\ex$' are considered to be in the list of primitive relations of every proposition/term language. For convenience we will use the infix notations `$t_1=t_2$' and `$t\ex$' in place of the prefix notations `$=(t_1,t_2)$' and `$\ex(t)$', respectively. The unnegatable formula `$t\ex$' (``$t$ exists'' or `` `$t$' successfully denotes'') is considered to be true if the term $t$ has a referent, and to be indeterminate otherwise.

\item[(2c)] There are four possible {\bf quantifier templates}:
\begin{align}
\label{qtemplates}
&\LQ \forall x \bq t \; \phi(x)\RQ,&
&\LQ \exists x \bq t \; \phi(x)\RQ,&
&\LQ \forall x \; \phi(x)\RQ,&
&\LQ \exists x \; \phi(x)\RQ,&
\end{align}
where $t$ is a placeholder term and $\phi(x)$ is a placeholder formula that contains the placeholder variable $x$ as a free variable. The first two templates are said to contain ``bounded quantifiers'' while the last two are said to contain ``unbounded quantifiers''. A proposition/term syntax may contain any subset of these four templates, possibly leading to an asymmetry with respect to de Morgan duality. If a proposition/term syntax contains either of the first two templates, it must also contain an atomic formula template corresponding to a binary relation `$\bq$'. However, the symbol `$\bq$' is not taken to necessarily connote a partial order.
\end{itemize}
\item[(3)] There are four types of term templates:
\begin{itemize}
\item[(3a)] The term templates for {\bf variables}, {\bf constants}, and {\bf primitive functions} are the same as in the usual predicate calculus: variables and constants are terms, and $f(t_1,\ldots,t_n)$ is a term whenever $t_1,\ldots,t_n$ are terms and $f$ is a string known as a \emph{primitive function}.
\item[(3b)] The term template for a {\bf quantifier-like operator (QLO)} takes the form
\[
\Psi(\xx | \tt,\bfphi) = \Psi\big(x_1,\ldots,x_k \big| t_1,\ldots,t_n,\phi_1,\ldots,\phi_m\big)
\]
where $\tt = (t_1,\ldots,t_n)$ and $\bfphi = (\phi_1,\ldots,\phi_m)$ are lists of placeholder terms and formulas, and $\Psi$ is a string (i.e. the ``quantifier-like operator''). The operator $\Psi$ is thought of as ``quantifying'' over $x_1,\ldots,x_k$, so that while $x_1,\ldots,x_k$ may appear free in (the terms and formulas substituted for) $t_1,\ldots,t_n$ and $\phi_1,\ldots,\phi_m$, they are considered to be non-free variables with respect to the formula `$\Psi(\xx | \tt,\bfphi)$'.
\end{itemize}
\item[(4)] Although we usually deal with languages with a single domain of discourse, it is easy to modify the above definition to deal with multiple domains of discourse (i.e. so-called ``many-sorted'' languages). In this case each term template, placeholder term, and placeholder variable must be assigned a domain of discourse, and when using a template one cannot replace a placeholder term or variable by a term constructed using a template corresponding to a different domain of discourse. For the quantifier templates, a different subset of the templates may be considered valid for each domain of discourse.
\end{itemize}
\end{definition}

Although we do not treat negation as a primitive logical operator, we define the negation of a valid formula by recursion:
\begin{itemize}
\item The negation of a negatable atomic formula `$R(t_1,\ldots,t_n)$' is `$\neg R(t_1,\ldots,t_n)$', and vice-versa.
\item The negation of a conjuctive, disjunctive, or quantified formula is determined by the laws of de Morgan duality, for instance
\[
\neg \LQ\exists x \; \phi(x)\RQ = \LQ \forall x \; \neg \phi(x)\RQ,
\]
assuming that the right-hand side exists and is a valid formula. Note that the possible lack of symmetry in quantifier templates may lead to some formulas being unnegatable despite containing no unnegatable atomic formulas.
\end{itemize}
If $\phi$ is a negatable formula then we let
\begin{equation}
\label{definite}
\DD(\phi) \df \LQ \phi \text{ or } \neg \phi\RQ
\end{equation}
and we think of $\DD(\phi)$ as asserting ``$\phi$ has a definite truth-value''.

Although we can be very mathematically rigorous in the definition of the syntax of a proposition/term language, our description of its semantics may appear a little hazy. However, when we get to the business of assigning an axiom system to a proposition/term language, we will be able to be rigorous again.

\begin{definition}
\label{definitionPTlanguage}
A \emph{proposition/term language} is a proposition/term syntax together with an intuitive picture of how to interpret the valid terms and formulas of that syntax. Specifically, this intuitive picture must consist of
\begin{itemize}
\item an idea of what kind of objects the quantifiers and QLOs are supposed to be quantifying over (i.e. the domain of discourse), and of how to interpret the allowed quantifier templates;
\item for each primitive relation, an idea of what it would mean for a given list of objects to satisfy the relation;
\item for each negatable primitive relation, an idea of what it would mean for a given list of objects to fail to satisfy the relation;\Footnote{This may seem pedantic since it is intuitively plausible that if a speaker of a language knows what it would mean for a \statement\ to be true, then \he\ also knows what it would mean for the \statement\ to be false. However, we have argued against this intuitive assumption in Section \ref{sectionkripke}.}
\item an idea of which primitive relations are ``strongly negatable'' in the sense that every list of objects either satisfies or fails to satisfy the relation;
\item for each primitive function, an idea of how it would be applied to a given list of objects to produce another object;
\item for each quantifier-like operator, a \emph{reductive interpretation} of that operator as described below.
\end{itemize}
Here, a reductive interpretation of a quantifier-like operator $\Psi(\xx|\tt,\bfphi)$ is an equivalence of the form
\begin{equation}
\label{Psidef}
\Psi(\xx|\tt,\bfphi) = s
\;\;\Leftrightarrow\;\;
\Theta(\xx|\tt,s,\bfphi),
\end{equation}
where $\Theta(\xx|\tt,s,\bfphi)$ is a schematic formula (that is, a formula template constructed according to the syntax rules of the proposition/term language, but with placeholders terms, formulas, and variables allowed, as well as a primitive negation operation), written without using QLOs. The equivalence \eqref{Psidef} is understood as a conceptual equivalence, meaning that \eqref{Psidef} is either understood as giving the intuitive meaning of $\Psi$, or as following from this intuitive meaning via a straightforward argument. Note however that \eqref{Psidef} cannot be assumed to always give a coherent intuitive meaning to the operator $\Psi$, since for any given $\xx,\tt,\bfphi$ it may happen that there exist two terms $s_1,s_2$ referring to different objects such that the formulas $\Theta(\xx|\tt,s_1,\bfphi)$ and $\Theta(\xx|\tt,s_2,\bfphi)$ both hold, or else that there exist $s_1,s_2$ referring to the same object such that $\Theta(\xx|\tt,s_1,\bfphi)$ holds but $\Theta(\xx|\tt,s_2,\bfphi)$ does not. Nevertheless, in many circumstances the formula \eqref{Psidef} can indeed be understood as giving an unambiguous meaning to $\Psi$.

As an example, the quantifier-like operator `elt' (``the unique element of'') has a reductive interpretation
\begin{equation}
\label{russell}
\elt\{x:\phi(x)\} = s \;\;\Leftrightarrow\;\; \phi(s) \text{ and } \forall x \; (s=x \text{ or }  \neg\phi(x)),
\end{equation}
where $\phi$ is a placeholder formula, understood to be negatable. A bounded version of `elt', with template `$\elt\{x \bq t:\phi(x)\}$', can be treated similarly. In what follows we assume that every proposition/term language in which both of the unbounded (resp. bounded) quantifier templates are allowed also has the unbounded (resp. bounded) version of the `elt' QLO.

We take it as axiomatic that an intuitive picture of the form given above automatically leads to an intuitive picture of how to interpret any valid term or formula of the syntax of a proposition/term language, via the method of recursion. Thus, in any sufficiently precise proposition/term language we can talk about the referent of any term, and the truth-value of any formula, without ambiguity. However, we do not assume that terms always have referents or that formulas always have truth-values. Also note that according to our picture it is impossible for an unnegatable formula to be false, because a proposition/term language is not supposed to include an idea of what it would mean for such a formula to be false, but only an idea of what it would mean for such a formula to be true.

Note that from the syntactic point of view, a proposition/term language still contains two pieces of information not contained in a proposition/term syntax: a list of which primitive relations are ``strongly negatable'', and a list of ``conceptual equivalences'' between QLOs and schematic formulas. These pieces of information will both be important when we define the notion of a ``standard axiomatization''.

Also note that we can define entirely syntactically a class of \emph{strongly negatable} formulas: negatable formulas whose atomic subformulas are all strongly negatable and which contain no subterms constructed via quantifier-like operator templates. According to the intuitive picture of the proposition/term language, such formulas always have a definite truth-value.
\end{definition}

\begin{remark}
For the classical mathematician trained in working in \ZFC, we could make the following definition: a \emph{proposition/term language} is a proposition/term syntax together with a model for that language, i.e. a set representing the domain(s) of discourse, together with appropriate relations and functions on that set corresponding to the interpretation of the primitive relations and functions of the syntax, as well as a list of ``strongly negatable'' primitive relations and a list of pairings between QLOs and schematic formulas. However, this definition does not fully capture what we mean when we talk about ``proposition/term languages''.
\end{remark}

It will perhaps be helpful to give an example:

\begin{definition}
The language of \emph{gradualist number theory (\GNT)} is the proposition/term language in which
\begin{itemize}
\item the domain of discourse is the natural numbers,
\item both bounded and unbounded existential quantifiers are allowed, but only bounded universal quantifiers are allowed,
\item `$=$' and `$\bq$' are binary strongly negatable relations, while `$\ex$' is a unary unnegatable relation,
\item the numerals `$0$' and `$1$' are constants, while addition, multiplication, and exponentiation are binary functions,
\item in addition to the bounded `elt' operator, there is a quantifier-like operator $\LQ\fst$' which inputs a negatable formula $\phi(n)$ with $n$ as a free variable and outputs the expression `$\fst\{n : \phi(n)\}$', which is conceptually defined via the equivalence
\[
\fst\{n : \phi(n)\} = t
\;\;\Leftrightarrow\;\;
\phi(t) \text{ and } \forall n < t \; \neg\phi(n).
\]
(Verbally, $\fst\{n : \phi(n)\}$ is ``the first integer $n$ such that $\phi(n)$ holds''.)
\end{itemize}
Since \GNT\ has unbounded existential quantifiers but not unbounded universal quantifiers, a \statement\ of the form $\LQ\exists n \; \phi(n)$' is unnegatable, and thus can be either true or indeterminate, but never false. We can motivate this convention by an ``algorithmic'' interpretation of the semantics of \GNT. Namely, we can imagine an algorithm intended to compute the truth-value of $\LQ\exists n \; \phi(n)$' that proceeds by computing the truth-values of $\phi(0),\phi(1),\ldots$ in parallel. If it finds an $n$ such that $\phi(n)$ is true, then it returns `true' as the truth-value for `$\exists n \; \phi(n)$', and otherwise it continues the computation indefinitely without ever returning a value. This algorithm makes sense because observing finitely many $n$\,s such that $\phi(n)$ is false would never allow us to directly conclude that `$\exists n \; \phi(n)$' is false, so a brute-force algorithm should never return `false' as the truth-value for `$\exists n \; \phi(n)$'. Note that if none of the \statements\ $\phi(0),\phi(1),\ldots$ are true, then the algorithm will continue to run forever, and thus in the context of \GNT\ the formula `$\exists n \; \phi(n)$' will remain indeterminate forever.\Footnote{From a philosophical point of view, we should warn that the hypothetical ``If none of the \statements\ $\phi(0),\phi(1),\ldots$ are true'' is an outside-view way of looking at things; within the language \GNT, there is no way to talk about all of the \statements\ $\phi(0),\phi(1),\ldots$ simultaneously being true or false. The reason for this is that someone who accepts the validity of \GNT\ (for example a constructive mathematician who accepts it on the basis of the algorithmic interpretation described above) is not committed to accepting the validity of \statements\ of the form `$\forall n \; \phi(n)$'. See Section \ref{sectionnumberrealism}.}
\end{definition}

\begin{definition}
The language of \emph{classical number theory (\NT)} is the same as \GNT, except that unbounded universal quantifiers are allowed.
\end{definition}

Due to its algorithmic interpretation, the language of gradualist number theory should be acceptable to constructivist philosophers, who may be more suspicious of the language of classical number theory, which does not have such an algorithmic interpretation.

The languages \NT\ and \GNT\ are representative of two larger classes of proposition/term languages: the class of \emph{classical languages} and the class of \emph{selfmeta languages}, respectively. We proceed to define these classes and discuss their basic properties:

\subsection{Classical languages}
\label{subappendixclassical}

\begin{definition}
A proposition/term language is \emph{classical} if
\begin{itemize}
\item it allows unbounded quantification (both universal and existential), and
\item all primitive relations except for `$\ex$' are strongly negatable.
\end{itemize}
\end{definition}

Classical proposition/term languages are very similar to the languages of the standard predicate calculus, but differ in that they allow terms that do not necessarily have referents. As a consequence, classical proposition/term languages can have formulas that are neither true nor false (because they contain terms without referents). However, we will now show that every formula of a classical proposition/term language is canonically equivalent to a formula which is (in principle) either true or false. We call this equivalent formulation the \emph{Russell reformulation}, in honor of Bertrand Russell's philosophical analysis of the meaning of the word `the' \cite[Chapter XVI]{Russell}, which corresponds to defining the Russell reformulations of sentences involving the `elt' operator. The concept of Russell reformulations applies just as well to proposition/term languages that are not classical, although for non-classical languages Russell reformulations may remain indeterminate.

\begin{definition}
\label{definitionrussellreformulation}
Let $L$ be a proposition/term language allowing unbounded existential quantification. The \emph{Russell reformulation} of a formula $\phi$ of $L$, which we denote by $\lb \phi\rb$, is defined by the following recursive rules:
\begin{itemize}
\item[1.] $\lb\LQ \forall x \;\; \phi(x)\RQ\rb \;\df\; \LQ \forall x \;\;\lb \phi(x)\rb$' and the obvious analogues for `$\exists$', `and', and `or'.
\item[2.] If $R$ is a primitive relation other than `$\ex$', then
\[
\lb R(t_1,\ldots,t_n)\rb \;\;\df\;\; \LQ \exists x_1,\ldots,x_n \;\; R(x_1,\ldots,x_n)\wedge (t_1 \to x_1)\wedge\cdots\wedge (t_n\to x_n)\RQ.
\]
\item[3.] $\lb\LQ \forall x \bq t \;\; \phi(x)\RQ\rb \;\df\; \LQ \exists y \;\; (t\to y) \wedge \forall x \bq y \;\; \lb \phi(x)\rb$' and the obvious analogue for `$\exists$'.
\item[4.] $\lb \mq{t\ex} \rb \;\df\; \LQ \exists x \;\; (t \to x)$'.
\end{itemize}
The last three definitions contain expressions of the form $(t \to x)$, where $t$ is a term and $x$ is a variable. The meaning of such expressions is also given recursively, by the following rules:
\begin{itemize}
\item[1$'$.] If $f$ is a primitive function, then
\[
(f(t_1,\ldots,t_n)\to x) \;\df\; \LQ \exists y_1,\ldots,y_n \;\; f(y_1,\ldots,y_n) = x \wedge (t_1\to y_1)\wedge\cdots\wedge(t_n\to y_n)\RQ.
\]
\item[2$'$.] If $y$ is a constant or variable, then $(y\to x) \;\df\; \LQ y=x$'.
\item[3$'$.] If $\Psi$ is a quantifier-like operator with corresponding schematic formula $\Theta$, then
\[
(\Psi(\xx|\tt,\bfphi) \to y) \;\;\df\;\; \lb\Theta(\xx|\tt,y,\bfphi)\rb.
\]
\end{itemize}
\end{definition}

The idea behind the concept of Russell reformulations is that the Russell formulation of a formula $\phi$ should be equivalent to $\phi$. Similarly, if $t$ is a term and $x$ is a variable then the formula $(t\to x)$ should be equivalent to the formula `$t = x$'. However, in stating this result precisely we need to first specify what we mean by ``equivalent''; cf. Appendix \ref{appendiximplication}. In the next section we introduce the notion of a standard axiomatization of a proposition/term language, yielding the following:

\begin{lemma}
\label{lemmarussellequivalent}
The Russell reformulation of any formula $\phi$ is provably equivalent to $\phi$ relative to any standard axiomatization of $L$. Similarly, for all $t$ and $x$ the formula $(t\to x)$ is provably equivalent to \text{`$t=x$'}.
\end{lemma}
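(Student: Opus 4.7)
The proof plan is to proceed by simultaneous (mutual) induction on the syntactic complexity of the expressions involved. We need mutual induction because formulas contain terms (in atomic formulas and in the bounds of bounded quantifiers) while terms contain formulas (inside QLOs), so neither statement can be established in isolation. I would define a single complexity measure that assigns the same value to a formula and to all its subexpressions (subformulas and subterms), and induct on this measure, simultaneously proving (i) $\phi \leftrightarrow \lb \phi \rb$ for every formula $\phi$ and (ii) $(t \to x) \leftrightarrow (t = x)$ for every term $t$ and variable $x$, both provably in any standard axiomatization.

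For statement (ii), the base cases are trivial: when $y$ is a constant or variable, $(y \to x)$ is defined to be literally `$y = x$'. For the inductive step on primitive function terms, the right-hand side of rule (1$'$) is `$\exists y_1,\ldots,y_n \; f(y_1,\ldots,y_n) = x \wedge (t_1 \to y_1) \wedge \cdots \wedge (t_n \to y_n)$'; by the inductive hypothesis each $(t_i \to y_i)$ is equivalent to `$t_i = y_i$', and then standard equality reasoning collapses the existential to `$f(t_1,\ldots,t_n) = x$'. The QLO case (rule 3$'$) is where the definition of a standard axiomatization does its work: the reductive interpretation \eqref{Psidef} yields $\Psi(\xx|\tt,\bfphi) = y \leftrightarrow \Theta(\xx|\tt,y,\bfphi)$ as an axiom (or derivable schema), and the inductive hypothesis applied to the formula $\Theta$ gives $\Theta \leftrightarrow \lb \Theta \rb$, whose conjunction yields the desired equivalence.

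For statement (i), the inductive steps for conjunction, disjunction, and unbounded quantifiers are immediate from the recursion rules and the inductive hypotheses applied to the immediate subformulas. For an atomic formula $R(t_1,\ldots,t_n)$, the right-hand side `$\exists x_1,\ldots,x_n \; R(x_1,\ldots,x_n) \wedge (t_1 \to x_1) \wedge \cdots \wedge (t_n \to x_n)$' reduces via statement (ii) to `$\exists x_1,\ldots,x_n \; R(x_1,\ldots,x_n) \wedge t_1 = x_1 \wedge \cdots \wedge t_n = x_n$', which is provably equivalent to $R(t_1,\ldots,t_n)$ by equality substitution — provided each $t_i$ has a referent, which the standard axiomatization should enforce via the treatment of `$\ex$' and the convention that formulas containing non-denoting terms are indeterminate. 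The bounded-quantifier case requires verifying that `$\forall x \bq t \; \phi(x)$' is equivalent to `$\exists y \; (t \to y) \wedge \forall x \bq y \; \lb \phi(x) \rb$'; here one uses statement (ii) on $t$ to replace $(t \to y)$ by `$t = y$' and the inductive hypothesis on $\phi(x)$, then eliminates the existential using equality.

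The main obstacle is the interaction between the three-valued semantics and the axiomatic equivalences, particularly at the atomic and bounded-quantifier cases. For example, if $t$ has no referent, then both sides of the bounded-quantifier equivalence should be indeterminate rather than having a common truth-value, and the axiom system must be strong enough to prove the formulas equivalent as formal sentences without presupposing classical truth-value assignment. This is handled by the fact that a standard axiomatization, as will be specified in the next appendix, treats the QLO reductive interpretations as axioms and includes appropriate existence-preserving equality axioms (so that `$t = x$' provably implies `$t\ex$'). A secondary subtle point is that the induction measure must genuinely decrease when passing from $\Psi(\xx|\tt,\bfphi)$ to $\Theta(\xx|\tt,y,\bfphi)$; this is legitimate because $\Theta$ is a schematic formula written without QLOs and containing only the placeholder terms and formulas of $\Psi$, whose complexities are already less than that of $\Psi$ itself.
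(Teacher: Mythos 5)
Your proposal is correct and takes essentially the same route as the paper: the paper gives no written-out proof of Lemma \ref{lemmarussellequivalent}, remarking only that ``once the notion of a standard axiomatization is defined, the proof is an easy but tedious induction argument,'' and your simultaneous induction on formulas and terms, discharging the atomic and QLO cases via the equality/existence axioms and the reductive-interpretation equivalences of Definition \ref{definitionstandardaxiom}, is precisely that argument filled in. Your closing observation that the induction measure must be chosen so that passing from $\Psi(\xx|\tt,\bfphi)$ to $\Theta(\xx|\tt,y,\bfphi)$ decreases it (e.g.\ QLO-nesting depth rather than raw length, since $\Theta$ may repeat its placeholders) is a genuine detail the paper leaves implicit.
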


Once the notion of a standard axiomatization is defined, the proof of Lemma \ref{lemmarussellequivalent} is an easy but tedious induction argument. We will also need the following technical results:

\begin{lemma}
\label{lemmarusselldefinite}
In a classical proposition/term language, the Russell reformulation of any formula is strongly negatable.
\end{lemma}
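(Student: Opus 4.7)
The plan is to proceed by simultaneous structural induction, proving both (a) for every formula $\phi$, the Russell reformulation $\lb\phi\rb$ is strongly negatable, and (b) for every term $t$ and variable $x$, the formula $(t\to x)$ is strongly negatable. Recalling the syntactic definition, ``strongly negatable'' requires three things: the formula is negatable, every atomic subformula uses a strongly negatable primitive relation, and no subterm is constructed via a quantifier-like operator template. The classicality hypothesis gives us two crucial facts up front: first, every primitive relation except `$\ex$' is strongly negatable, so as long as the Russell reformulation eliminates all uses of `$\ex$', the atomic-subformula condition will be inherited from the starting relations $=$ and the primitive relations of $L$; second, both unbounded quantifier templates are available, so any quantifier produced by the reformulation is negatable and de Morgan duality is symmetric.

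The verification of (a) is a clause-by-clause check against Definition~\ref{definitionrussellreformulation}. Clause 1 and the connective cases reduce immediately to the inductive hypothesis on subformulas. Clause 4 rewrites `$t\ex$' as `$\exists x \;\; (t\to x)$', eliminating the sole unnegatable primitive relation and leaving only a formula $(t\to x)$ handled by (b). Clause 2 reintroduces the primitive relation $R$, but only applied to fresh variables $x_1,\dots,x_n$, so $R(x_1,\dots,x_n)$ is a strongly negatable atomic subformula by hypothesis, and the accompanying conjuncts $(t_i\to x_i)$ are strongly negatable by (b). Clause 3 produces an unbounded existential with a body made of $(t\to y)$ and a bounded-quantified $\lb\phi(x)\rb$; since $\bq$ is one of the strongly negatable primitive relations of a classical language, and since a bounded quantifier over a strongly negatable body remains strongly negatable, this case reduces to the inductive hypothesis.

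For (b) the verification is parallel. Clause~$2'$ produces the atomic equality `$y=x$', which is strongly negatable since `$=$' is by definition. Clause~$1'$ produces an existential conjunction whose atomic subformula is `$f(y_1,\dots,y_n)=x$' (again an equality applied to variables) together with the $(t_i\to y_i)$ formulas, both handled by (b). The only delicate case is clause~$3'$, which expands $(\Psi(\xx|\tt,\bfphi)\to y)$ as $\lb\Theta(\xx|\tt,y,\bfphi)\rb$. Here the recursion lands on the Russell reformulation of a formula that is not literally a subterm of the original, so ordinary structural induction does not directly apply. The key observation, which handles this obstacle, is that by Definition~\ref{definitionPTlanguage} the schematic formula $\Theta$ is written without any QLOs; hence every occurrence of a QLO inside $\Theta(\xx|\tt,y,\bfphi)$ sits strictly inside one of the placeholders $\tt,\bfphi$, each of which is a proper subconstituent of $\Psi(\xx|\tt,\bfphi)$.

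To make this rigorous I would carry out the induction on the lexicographic pair (total number of QLO occurrences, total symbol count) of the formula or term being reformulated. In every non-QLO clause the second coordinate drops while the first does not increase, so the induction hypothesis applies. In clause~$3'$ the first coordinate drops strictly: all QLOs in the new formula originate in the $\tt$ and $\bfphi$ that were already counted inside the original $\Psi(\xx|\tt,\bfphi)$, minus the one $\Psi$ we just eliminated. Thus the main obstacle---the apparent circularity of expanding a QLO into a schematic formula---is resolved by exploiting the syntactic restriction on $\Theta$, which guarantees that the QLO count is a well-founded measure on which to induct. With this measure in hand, the two claims propagate simultaneously and yield the lemma.
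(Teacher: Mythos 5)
Your overall strategy is the right one, and it is clearly the argument the paper has in mind when it dismisses this lemma with ``the proofs are easy induction arguments'': a simultaneous induction on $\lb\phi\rb$ and $(t\to x)$, checking each clause of Definition~\ref{definitionrussellreformulation}, with classicality supplying the strong negatability of every primitive relation other than `$\ex$' and the symmetry needed to negate the quantifiers that the reformulation introduces. You are also right to single out clause~$3'$ as the one place where the recursion is not structural, which is the only point where anything needs to be said at all.

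However, the well-founded measure you propose to handle that clause does not work. You take the first coordinate to be the \emph{total number of QLO occurrences} and claim it strictly drops when $(\Psi(\xx|\tt,\bfphi)\to y)$ is expanded to $\lb\Theta(\xx|\tt,y,\bfphi)\rb$, on the grounds that the QLOs of the expansion all ``originate in the $\tt$ and $\bfphi$ that were already counted.'' This tacitly assumes each placeholder occurs at most once in $\Theta$, and that is false for the paper's own canonical reductive interpretation: in \eqref{russell} the placeholder $\phi$ occurs \emph{twice} on the right-hand side. So if $\phi$ itself contains $k\geq 2$ QLO subterms, the term $\elt\{x:\phi(x)\}$ has $1+k$ QLO occurrences while its expansion has $2k>1+k$, and your first coordinate increases. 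The repair is easy but changes the measure: induct instead on the lexicographic pair (maximal \emph{nesting depth} of QLOs, symbol count). Since $\Theta$ itself is QLO-free, every QLO of $\Theta(\xx|\tt,y,\bfphi)$ lies inside some copy of a constituent of $\tt,\bfphi$, so the nesting depth drops by exactly one in clause~$3'$ no matter how many times placeholders are duplicated, and it is non-increasing (with the length strictly decreasing) in every other clause. With that substitution your argument goes through; as written, the induction is not grounded on a well-founded order.
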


\begin{lemma}
\label{lemmarusselllength}
For all $\phi$ and $t$ we have $|\lb \phi\rb| = O(|\phi|)$ and $|(t\to x)| = O(|t|)$. Here we recall that $|\omega|$ denotes the length of a string $\omega$.
\end{lemma}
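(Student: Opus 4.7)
The plan is to prove both bounds simultaneously by structural induction on the recursive definitions of $\lb\phi\rb$ and $(t\to x)$. For the non-QLO clauses of Definition~\ref{definitionrussellreformulation} — conjunction and disjunction, unbounded and bounded quantification, atomic formulas built from a primitive relation, the existence predicate, variables, constants, and primitive function applications — the reformulation wraps the recursive reformulations of the immediate subcomponents in a constant-sized scaffold consisting of punctuation, quantifier symbols, equality signs, the name of the primitive relation or function, and a bounded list of freshly introduced bound variables. Since a proposition/term language has finitely many primitive relations and functions, each of bounded arity, there is a uniform constant $K_0$ bounding the length of this scaffold; in each such clause the desired linear bound then follows immediately from the inductive hypothesis applied to the strict subcomponents.

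The only delicate case is clause 3$'$, which reduces $(\Psi(\xx|\tt,\bfphi)\to y)$ to $\lb\Theta_\Psi(\xx|\tt,y,\bfphi)\rb$. The right-hand side is the Russell reformulation of an expression which is \emph{not} a sub-parse-tree of the left-hand side, so a naive induction on parse-tree size fails to close here. The key observation is the stipulation in Definition~\ref{definitionPTlanguage} that the schematic formula $\Theta_\Psi$ attached to a QLO be written \emph{without using QLOs}; combined with the assumption that $L$ has only finitely many QLOs, this means that $\Theta_\Psi$ is a fixed template of bounded length in which each placeholder slot appears a bounded number of times. Consequently $|\Theta_\Psi(\xx|\tt,y,\bfphi)|$ is bounded by a language-dependent constant times $|\tt|+|\bfphi|+|y|+1$, and its Russell reformulation unfolds through the QLO-free structure of $\Theta_\Psi$ in a bounded number of scaffold-introducing steps before the recursion reaches the substituted subterms $t_i$ or subformulas $\phi_j$.

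To make this precise I would use a two-level induction: an outer induction on the QLO-nesting depth of $\phi$ (respectively $t$), which strictly decreases when passing from $\Psi(\xx|\tt,\bfphi)$ to its substituted subcomponents $\tt$ and $\bfphi$, together with an inner induction on parse-tree size handling all the non-QLO clauses. Equivalently, one first establishes the linear bound for QLO-free expressions by ordinary structural induction, then handles the general case by outer induction on QLO-depth, using the QLO-free bound to propagate through the fixed scaffold contributed by $\Theta_\Psi$ and invoking the outer inductive hypothesis on the strictly-lower-QLO-depth expressions in $\tt$ and $\bfphi$ at the leaves of that scaffold. The resulting constant depends on $K_0$ and on the maximum, over the finitely many QLOs $\Psi$ of the language, of $|\Theta_\Psi|$ times the maximum placeholder multiplicity in $\Theta_\Psi$. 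The main obstacle is precisely clause 3$'$; what rescues it is the QLO-freeness of $\Theta_\Psi$, which is designed into the notion of a proposition/term language exactly to make such recursions well-behaved.
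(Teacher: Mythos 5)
Your skeleton---a two-level induction, outer on QLO-nesting depth and inner on parse-tree size, with clause 3$'$ singled out as the only non-structural step---is the right one, and it is more explicit than the paper, which dismisses both lemmas as ``easy induction arguments.'' But the inductive step does not close as you have written it, and the obstruction is exactly the placeholder multiplicity that you mention only in passing. If a placeholder occurs $m_i>1$ times in $\Theta_\Psi$, then clause 3$'$ followed by the structural clauses produces $m_i$ disjoint copies of the Russell reformulation of the substituted subexpression, so the recurrence is
\[
|(\Psi(\xx|\tt,\bfphi)\to y)| \;\le\; A_\Psi+\sum_i m_i\,|(t_i\to x_i)|+\sum_j n_j\,|\lb\phi_j\rb|,
\]
and feeding in the inductive hypothesis with constant $C$ returns a bound with constant $C\cdot\max_{i,j}(m_i,n_j)$. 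The constant therefore degrades geometrically with QLO-depth, and the argument only yields $O(M^{d}\,|t|)$ where $d$ is the nesting depth, not $O(|t|)$. This is not a hypothetical worry: the paper's own reductive interpretations of $\elt$ (equation \eqref{russell}) and of $\fst$ each mention the placeholder formula $\phi$ twice. Concretely, taking $t_0$ to be a constant and $t_d=\LQ\elt\{x: x=t_{d-1}\}\RQ$, the two atomic occurrences of $t_{d-1}$ arising from `$\phi(s)$ and $\forall x\,(s=x$ or $\neg\phi(x))$' each spawn their own copy of $(t_{d-1}\to\cdot)$ under clause 2, so $|(t_d\to s)|\ge 2^{d}$ while $|t_d|=O(d)$. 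Your closing assertion that the final constant depends only on $K_0$ and on $\max_\Psi|\Theta_\Psi|$ times the maximal placeholder multiplicity therefore does not follow from the induction you describe.

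To be fair, the defect appears to be inherited from the statement rather than introduced by you: as literally defined, the Russell reformulation is not length-linear for languages whose QLO templates duplicate placeholders, and both of the paper's standard examples do. A correct proof needs one of the following repairs: (i) add the standing hypothesis that every placeholder occurs exactly once in each $\Theta_\Psi$ (violated by $\elt$ and $\fst$); (ii) modify clause 3$'$ to hoist each substituted term into a fresh existentially quantified variable, defining $(\Psi(\xx|\tt,\bfphi)\to y)$ as $\LQ\exists u_1,\ldots,u_n\;(t_1\to u_1)\wedge\cdots\wedge(t_n\to u_n)\wedge\lb\Theta(\xx|u_1,\ldots,u_n,y,\bfphi)\rb\RQ$, so that each $t_i$ is reformulated once no matter how often $\Theta_\Psi$ uses it (duplicated placeholder \emph{formulas} still need separate handling); or (iii) weaken the conclusion to a constant depending on QLO-depth and check that the downstream use in the proof of Theorem \ref{theoremrecstrat2B} survives. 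With repair (i) or (ii) in place, your two-level induction goes through verbatim.
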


Again, the proofs are easy induction arguments. Note that the above lemmas are all purely syntactic assertions.

\subsection{Selfmeta languages}
\label{subappendixselfmeta}

Like the definition of a proposition/term language, our definition of a selfmeta language relies upon slightly non-rigorous intuitive concepts. Again, we will be able to be more rigorous when we talk about axiomatizations.

\begin{definition}
A proposition/term language $\lang_1$ is \emph{capable of reproducing} another proposition/term language $\lang_2$ if the domains of discourse, primitive relations, and primitive functions of $\lang_2$ are all interpretable in $\lang_1$; i.e. if
\begin{itemize}
\item for each domain of discourse $D_2$ of $\lang_2$,
\begin{itemize}
\item there is domain of discourse $D_1$ of $\lang_1$ and an intuitive way of ``encoding'' objects in $D_2$ as objects in $D_1$;
\item there is a corresponding schematic formula of $\lang_1$ whose intuitive meaning is ``$x$ is an encoded version of an object in $D_2$'';
\item each allowed quantifier template of $D_2$ can be translated into a valid schematic formula of $\lang_1$ via the encoding described above; and
\end{itemize}
\item for each primitive relation or function of $\lang_2$, there is a corresponding schematic formula or term of $\lang_1$ whose intuitive meaning can be thought of as being identical via the encoding.
\end{itemize}
\end{definition}

\begin{definition}
A proposition/term language $\lang$ is \emph{capable of producing lists}, or \emph{list-compatible}, if
\begin{itemize}
\item $\lang$ has only one domain of discourse $D$;
\item $\lang$ is capable of reproducing \GNT;
\item given any tuple of objects $(a_1,\ldots,a_n)$ in $D$, one can associate a new object $[a_1,\ldots,a_n]$ in $D$ which can intuitively be thought of as the ``list'' consisting of the objects $a_1,\ldots,a_n$;
\item there are schematic formulas and terms in $\lang$ whose intuitive meanings are ``$x$ is a list'', ``the $n$th element of the list $x$'' and ``the length of the list $x$''. We will denote these formulas and terms by $\List(x)$, $x[n]$, and $|x|$, respectively.
\end{itemize}
\end{definition}

The standard languages \GNT, \NT, \GST, and \ST\ are all list-compatible.

\begin{definition}
The \emph{metalanguage} of a list-compatible proposition/term language $\lang$ is the proposition/term language $\lang+1$ given as follows:
\begin{itemize}
\item The domain of discourse $D$ is the same as the domain of discourse of $\lang$, and all templates of $\lang$ are also templates of $\lang+1$. Note that since $\lang$ is capable of reproducing \GNT, which can talk about strings via the identification of a number with its binary expansion, $\lang$ can talk about strings.
\item There are two additional templates, corresponding to a new negatable primitive relation $\TT$ and a new primitive function $\RR$. Let $\phi$ and $t$ be terms intended to represent strings, and let $\xx = (x_1,\ldots,x_k)$ (resp. $\aa = (a_1,\ldots,a_k)$) be a list of terms intended to represent strings (resp. objects in $D$).
\begin{itemize}
\item The formula $\TT(\phi\;|\;\xx=\aa)$ (resp. $\neg\TT(\phi\;|\;\xx=\aa)$) means ``$\phi$ (resp. $\neg\phi$) is a true \statement, given the variable assignment $\lv\xx=\aa\rv$''.
\item The term $\RR(t\;|\;\xx=\aa)$ means ``the referent of the term $t$, given the variable assignment $\lv\xx=\aa\rv$''.
\end{itemize}
\end{itemize}
We take it as axiomatic that the intuitive picture of a list-compatible proposition/term language $\lang$ automatically leads to an intuitive picture for the metalanguage $\lang+1$.

A language is \emph{selfmeta} if it is capable of reproducing its metalanguage.
\end{definition}

\begin{theorem}[Tarski's theorem, rephrased]
\label{theoremtarski}
No classical language can be selfmeta.
\end{theorem}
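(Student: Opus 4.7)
The plan is to run a version of the classical Tarski undefinability argument, adapted to the proposition/term language setup. Suppose for contradiction that $L$ is a classical proposition/term language that is selfmeta, i.e.\ capable of reproducing its metalanguage $L+1$. Since $L$ is list-compatible (being selfmeta and hence capable of reproducing \GNT\ through the reproduction of $L+1$), we may fix an encoding of strings (in particular, of well-formed formulas of $L$) as objects in the domain of discourse of $L$. Let $\ulcorner\phi\urcorner$ denote the term of $L$ that names the code of a formula $\phi$. By the selfmeta hypothesis there is a schematic formula $\tau(x)$ of $L$ whose intuitive meaning is ``$x$ is the code of a true \statement\ of $L$''; this is the reproduction in $L$ of the relation $\TT$ of $L+1$.

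Next, I would invoke a diagonal lemma in the present setting: because $L$ reproduces \GNT\ and has the list-compatibility machinery, the usual substitution/quining construction goes through, producing a closed formula $\phi$ of $L$ such that the equivalence $\phi \leftrightarrow \neg\tau(\ulcorner\phi\urcorner)$ is provable in any standard axiomatization of $L$ (and, a fortiori, holds semantically under any valid axiomatization). I would then replace $\phi$ and $\neg\tau(\ulcorner\phi\urcorner)$ by their Russell reformulations; by Lemma \ref{lemmarusselldefinite} these are strongly negatable formulas of a classical language, so each is either true or false, and by Lemma \ref{lemmarussellequivalent} the equivalence with the original formulas is preserved. In particular $\phi$ itself has a definite truth-value.

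Finally, I would derive the contradiction by cases on this truth-value. Using the defining property of $\tau$ (which comes from the intuitive meaning of $\TT$ transferred via the reproduction map), $\tau(\ulcorner\phi\urcorner)$ is true precisely when $\phi$ is true; hence $\neg\tau(\ulcorner\phi\urcorner)$ is true precisely when $\phi$ is false. Combined with the diagonal equivalence $\phi \leftrightarrow \neg\tau(\ulcorner\phi\urcorner)$, this says $\phi$ is true iff $\phi$ is false, contradicting the fact that $\phi$ has a definite truth-value. Hence no classical $L$ can be selfmeta.

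The main obstacle I anticipate is justifying the diagonal lemma inside the proposition/term formalism, rather than inside first-order predicate calculus where it is standard. The delicate points are: (i) confirming that the formula $\tau$ supplied by the reproduction of $\TT$ really does have the semantic property ``$\tau(\ulcorner\phi\urcorner)$ holds iff $\phi$ is true'' (as opposed to merely some weaker correspondence with the primitive relation $\TT$ of $L+1$), and (ii) checking that the substitution/self-reference trick, which in standard treatments relies on a recursion theorem for primitive recursive functions, survives in \GNT, whose unbounded universal quantification has been stripped away. For (i) I would unpack the definition of ``capable of reproducing'' and note that the intuitive meaning of $\TT$ in $L+1$ is exactly semantic truth in $L$, so any encoding of $\TT$ as a schematic formula of $L$ inherits this meaning. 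For (ii) I would observe that the diagonal construction only needs bounded universal and unbounded existential quantification together with a modest store of primitive functions on strings, all of which are available in \GNT\ (and hence in $L$).
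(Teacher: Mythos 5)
Your proposal is correct and follows essentially the same route as the paper's proof sketch: a quining/diagonal construction producing a formula provably equivalent to the negation of its own truth predicate, followed by a contradiction from the classical assumption that the formula must be true or false. The only difference is that you take slightly more care than the paper in justifying definiteness (via Russell reformulations and Lemma \ref{lemmarusselldefinite}) rather than appealing directly to classicality, which is a reasonable refinement but not a different argument.
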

\begin{proof}[Proof sketch]
For each schematic term $T = T(t)$, let $f(T)$ denote the formula `$\neg \TT(T(\LQ T\RQ))$' (here the variable assignment is taken to be trivial). Since the map $f$ is computable, there exists a schematic term $T_f$ ``encoding'' $f$ in the sense that for every schematic term $T$, the term $T_f(\LQ T\RQ)$ denotes a formula equivalent to $f(T)$. Then the term $T_f(\LQ T_f\RQ)$ denotes a formula $\phi$ equivalent to `$\neg \TT(T_f(\LQ T_f\RQ))$'. By the definition of $\TT$, the formula $\phi$ is equivalent to its own negation. But since $\lang$ is classical, $\phi$ must be either true or false, and either assumption leads to a contradiction.
\end{proof}

To make this proof more rigorous, we would first have to make clear what kind of proof we are looking for, since the claim is not a purely syntactic one. In the next section we introduce the notion of an axiomatization, and the above proof can be made rigorous in the sense that it can be formalized in any standard axiomatization of a classical selfmeta language. We would also have to make clear what we mean by ``equivalent'': we can let it mean ``provably equivalent with respect to a fixed standard axiomatization $A$''.

\section{Axiomatizations}
\label{appendixaxiomatizations}

We now turn to the question of what sort of axiom system is appropriate for a proposition/term language. Due to the fact that `implies' is not a primitive logical operator, these axiom systems will need to depend more heavily on rules of inference than standard axiom systems do.

\begin{definition}
\label{definitionproof}
An \emph{axiomatization} of a proposition/term language $\lang$ is a list of \emph{axiom schemas} and \emph{inference rules}. An axiom schema is simply a schematic formula, and an inference rule is a string of the form $\Psi_1(\tt,\bfphi),\ldots,\Psi_k(\tt,\bfphi) \Rightarrow \Psi_0(\tt,\bfphi)$, where $\Psi_0(\tt,\bfphi),\ldots,\Psi_k(\tt,\bfphi)$ are schematic formulas containing the placeholder terms and formulas $\tt$ and $\bfphi$.

In the remainder of this definition we fix a proposition/term language $\lang$ and an axiomatization $A$. A \emph{proof step} is a sentence taking one of the following forms:
\begin{itemize}
\item[(1)] `Assume $\phi$'
\item[(2)] `Then $\phi$'
\item[(3)] `Fix $x[<t]$'
\item[(4)] `Find $x[<t]$ such that $\phi(x)$'
\item[(5)] `Case 1/2: $\phi$'
\item[(6)] `Fix $m$ such that $\phi(m)$'
\end{itemize}
In (3) and (4), the brackets mean that `$<t$' should be included or not corresponding to the allowed quantifier templates of $\lang$ (with (3) corresponding to universal quantifiers and (4) corresponding to existential quantifiers). In (5), `1/2' means that either `1' or `2' should be written here. A step of type (1) is called a \emph{global hypothesis}, while a step of type (2) is called an \emph{inference}, while a step one of the types (3)-(6) is called an \emph{intermediate hypothesis}. If $S_k$ is a steps of one of the types (1),(2),(4),(5),(6), then the formula $\phi$ is called the \emph{conclusion} of $S_k$.

A \emph{proof with global free variables $\xx = (x_1,\ldots,x_n)$} is a finite list of proof steps $S_1,\ldots,S_\ell$, together with a binary ``dependence'' relation between the steps, with the following properties:
\begin{enumerate}[(I)]
\item Each step depends only on steps preceding it, and dependence is transitive.
\item The set of steps depending on $S_j$ is of the form $\{S_{j+1},\ldots,S_k\}$ for some $k$.
\item If a variable $y$ appears free in the conclusion of of a step $S_k$, then either $y$ is a global free variable (i.e. a member of $\xx$), or $S_k$ depends on a step $S_j$ of type (3) or (4) ``introducing'' $y$.
\item All global hypotheses appear consecutively at the beginning of the proof.
\item The last step is an inference that depends only on global hypotheses and inferences. The conclusion of the last step is called the \emph{global conclusion} of the proof.
\item A step $S_k$ of type (4) always depends on a prior step $S_j$ whose conclusion is of the form $\exists x[<t] \; \phi(x)$, where $\phi(x)$ is the conclusion of $S_k$.
\item Steps of type (5) come in pairs: if $S_{k_1}$ is of the form `Case 1: $\phi_1$', then there exists $S_{k_2}$ of the form `Case 2: $\phi_2$' such that both $S_{k_1}$ and $S_{k_2}$ depend directly on a prior step $S_j$ whose conclusion is of the form $\phi_1\vee\phi_2$. Here, ``depend directly'' means that $S_{k_1}$ and $S_{k_2}$ do not depend on any steps other than $S_j$ and the steps that $S_j$ depends on.
\item A step $S_k$ is said to ``relatively depend'' on an inference $S_j$ modulo a third step $S_i$ if $S_j$ depends on $S_i$, and all other steps that $S_j$ depends on either are inferences or are also steps that $S_k$ depends on.
\item If $S_k$ is an inference with conclusion $\phi$, then one of the following is true:
\begin{itemize}
\item[(2a)] There is an axiom schema of which $\phi$ is an instance.
\item[(2b)] There is an inference rule $\Psi_1,\ldots,\Psi_k \Rightarrow \Psi_0$ such that $\phi$ is an instance of $\Psi_0$ while all of the corresponding instances of $\Psi_1,\ldots,\Psi_k$ are the conclusions of prior steps that $S_k$ depends on.
\item[(2c)] (Generalization) $S_k$ relatively depends on a step of the form `Then $\mu(x)$' modulo a step of the form `Fix $x[<t]$', where $\phi = \forall x[<t] \; \mu(x)$.
\item[(2d)] (Specification) $S_k$ relatively depends on a step of the form `Then $\phi$' modulo a step of the form `Find $x[<t]$ such that $\mu(x)$', and depends on the corresponding step with conclusion $\exists x[<t] \; \mu(x)$, and $x$ does not appear free in $\phi$.
\item[(2e)] (Division into cases) For each $i=1,2$, $S_k$ relatively depends on a step of the form `Then $\phi$' modulo a step of the form `Case $i$: $\mu_i$', and depends on the corresponding step with conclusion $\mu_1\vee\mu_2$.
\item[(2f)] (Induction) $S_k$ relatively depends on a step of the form `Then $\mu(m+1)$' modulo a step of the form `Fix $m$ such that $\mu(m)$', and depends on steps with formula $\mu(0)$ and $t\ex$, where $\phi = \mu(t)$.\Footnote{One can also consider more general forms of induction such as transfinite induction. For simplicity we consider here only induction over the natural numbers.}
\end{itemize}
\end{enumerate}
\end{definition}

\begin{definition}
If there is a proof with global free variables $\xx$, global hypotheses $\phi_1(\xx),\ldots,\phi_k(\xx)$, and global conclusion $\psi(\xx)$, then $\psi(\xx)$ is said to be \emph{relatively provable} from $\phi_1(\xx),\ldots,\phi_k(\xx)$. In this case, the string
\begin{equation}
\label{implication}
\LQ\forall \xx \;\; \phi_1(\xx),\ldots,\phi_k(\xx) \;\Rightarrow \;\psi(\xx)\RQ
\end{equation}
is said to be a \emph{theorem} of the axiom system in question, and the proof in question is said to be a proof of this theorem. In general, strings of the form \eqref{implication} are called \emph{implications} or \emph{conditional \statements}. Note that implications are \emph{not} syntactically valid formulas, since they involve the implication symbol `$\Rightarrow$', as well as unbounded universal quantification (which may or may not be allowed in the proposition/term language under consideration).

If $k=0$ and $\xx=()$ is the empty list, then $\psi$ is called \emph{absolutely provable} or just \emph{provable}.

If `$\forall \xx \;\; \phi(\xx) \;\Rightarrow \;\psi(\xx)$' and `$\forall \xx \;\; \psi(\xx) \;\Rightarrow \;\phi(\xx)$' are both theorems, then
\begin{equation}
\label{equivalence}
\LQ\forall \xx \;\; \phi(\xx) \;\Leftrightarrow \;\psi(\xx)\RQ
\end{equation}
is also said to be a theorem. In general, strings of the form \eqref{equivalence} are called \emph{equivalences}.

We will use the notation $\phi\Rightarrow_A \psi$ as shorthand for `The implication $\LQ\phi\Rightarrow\psi$' is a theorem of the axiom system $A$'.
\end{definition}

\begin{definition}
\label{definitionstandardaxiom}
An axiomatization $A$ of a proposition/term language $\lang$ is \emph{standard} if it:
\begin{itemize}
\item[(1)] $A$ contains all of the following axiom schemata and inference rules:
\begin{align*}
\phi\wedge\psi &\;\Rightarrow\;\phi;&
\phi\wedge\psi &\;\Rightarrow\;\psi;&
\phi,\, \psi &\;\Rightarrow\;\phi\wedge\psi;&
\phi &\;\Rightarrow\; \phi\vee\psi;&
\psi &\;\Rightarrow\; \phi\vee\psi;&
\phi \wedge \neg\phi &\;\Rightarrow \; \psi;
\end{align*}
\begin{align*}
&x\ex;&
&c\ex;&
f(t_1,\ldots,t_n)\ex &\;\Leftrightarrow\; t_1\ex\wedge\cdots\wedge t_n\ex;&
R(t_1,\ldots,t_n) &\;\Rightarrow\; t_i\ex;&
\end{align*}
\begin{align*}
t\ex &\;\Rightarrow\; t=t;&
t_1 = t_2 &\;\Rightarrow\; T(t_1) = T(t_2);&
t_1 = t_2,\,\phi(t_1) &\;\Rightarrow\; \phi(t_2);&
\end{align*}
\item[(2)] $A$ contains all of the following inference rules allowed by its quantifier templates:
\begin{align*}
\phi(t) &\;\;\Rightarrow\;\; \exists x \;\phi(x);&
\forall x \;\phi(x),\, t\ex &\;\;\Rightarrow\;\; \phi(t);&
\\
t_1\bq t_2 \wedge \phi(t_1) &\;\;\Rightarrow\;\; \exists x \bq t_2 \;\phi(x);&
\forall x \bq t_2 \;\phi(x),\, t_1 \bq t_2 &\;\;\Rightarrow\;\; \phi(t_1);
\end{align*}
\[
\forall x \bq t \; \phi(x) \;\;\Rightarrow\;\; t\ex;
\]
\begin{align} \label{CPQ}
\forall x \; \DD(\phi(x)) &\;\;\Rightarrow\;\; \DD(\forall x \; \phi(x));&
\forall x\bq t \; \DD(\phi(x)) &\;\;\Rightarrow\;\; \DD(\forall x\bq t \; \phi(x));
\end{align}
where in \eqref{CPQ} we use the notation \eqref{definite}.
\item[(3)] $A$ contains all reductive interpretations of quantifier-like operators, i.e. equivalences of the form $\Phi(\xx|\tt,\bfphi) = s \Leftrightarrow \Theta(\xx|\tt,s,\bfphi)$;
\item[(4)] For every strongly negatable primitive relation $R$, $A$ contains the inference rule
\[
t_1\ex,\ldots,t_n\ex \;\;\Rightarrow\;\; \DD(R(t_1,\ldots,t_n));
\]
where we use the notation \eqref{definite};
\item[(5)] If $\lang$ is list-compatible, then $A$ contains corresponding inference rules allowing it to handle lists, for example
\begin{align*}
\List(t_1), \; t_2 < |t_1| \;\;&\Rightarrow\;\; t_1[t_2]\ex
\end{align*}
as well as the standard arithmetical axioms for numbers (we do not write down a complete list of the necessary axioms);
\item[(6)] If $\lang$ is selfmeta, then $A$ contains corresponding inference rules allowing it to handle the notions of truth and reference, see Definition \ref{definitionmetasystem} below. However, a standard axiom system need not contain any inference rule of the form \eqref{provabletrue}.
\end{itemize}
Note that we could also consider a slightly weaker notion of a standard axiom system, namely one where (1)-(6) are required to be theorems but not inference rules. This technical distinction will be of no significance to us.
\end{definition}

\begin{lemma}
Suppose that the implications $\phi\Rightarrow \mu$ and $\psi\Rightarrow\nu$ are theorems of a standard axiomatization of a proposition/term language. Then the implications
\begin{itemize}
\item[(A)] $\phi\wedge\psi \Rightarrow \mu\wedge\nu$
\item[(B)] $\phi\vee\psi \Rightarrow \mu\vee\nu$
\item[(C)] $\exists x [\bq t] \;\phi(x) \Rightarrow \exists x [\bq t] \;\mu(x)$
\item[(D)] $\forall x [\bq t] \;\phi(x) \Rightarrow \forall x [\bq t] \;\mu(x)$
\end{itemize}
are also theorems. Moreover if $\phi\Rightarrow \mu$ and $\psi\Rightarrow\nu$ can be proven in $m$ and $n$ symbols respectively, then \text{(A)} and \text{(B)} can be proven in $m+n+O(1)$ symbols while \text{(C)} and \text{(D)} can be proven in $m+O(1)$ symbols.
\end{lemma}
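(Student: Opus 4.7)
The plan is to construct, for each of (A)--(D), an explicit proof that invokes the given proofs of $\phi\Rightarrow\mu$ and $\psi\Rightarrow\nu$ as sub-proofs, surrounded by a small (constant-length) scaffolding of standard axiom schemata and inference rules from Definition \ref{definitionstandardaxiom}. The length bounds will then follow by counting: (A) and (B) each incorporate both sub-proofs, contributing $m+n$ symbols, while (C) and (D) each incorporate only the $\phi\Rightarrow\mu$ sub-proof, contributing only $m$ symbols; the constant overhead comes from the finitely many extra proof steps in each scaffold.

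For (A), I would take as global hypothesis \text{`Assume $\phi\wedge\psi$'}, then use the axiom schemata $\phi\wedge\psi\Rightarrow\phi$ and $\phi\wedge\psi\Rightarrow\psi$ to infer $\phi$ and $\psi$ respectively, splice in the given proofs to derive $\mu$ and $\nu$, and conclude with the inference rule $\phi,\psi\Rightarrow\phi\wedge\psi$ applied to $\mu$ and $\nu$. For (B), the global hypothesis would be \text{`Assume $\phi\vee\psi$'}; I would then use the division-into-cases rule (2e) by introducing \text{`Case 1: $\phi$'} and \text{`Case 2: $\psi$'}. In Case 1, splice in the proof of $\phi\Rightarrow\mu$ to derive $\mu$, then use $\mu\Rightarrow\mu\vee\nu$; in Case 2, symmetrically derive $\mu\vee\nu$ from $\nu$; finally conclude $\mu\vee\nu$ by (2e).

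For (C), with global hypothesis \text{`Assume $\exists x[\bq t]\;\phi(x)$'}, I would introduce an intermediate hypothesis of type (4), \text{`Find $x[\bq t]$ such that $\phi(x)$'}, splice in the proof of $\phi(x)\Rightarrow\mu(x)$ to derive $\mu(x)$, apply existential introduction to obtain $\exists x[\bq t]\;\mu(x)$ (using $t\ex$ as derived from the bounded quantifier axiom when needed), and close with the specification rule (2d); since the target formula $\exists x[\bq t]\;\mu(x)$ has no free $x$, this is legal. For (D), with global hypothesis \text{`Assume $\forall x[\bq t]\;\phi(x)$'}, I would introduce \text{`Fix $x[\bq t]$'}, derive $\phi(x)$ by universal specification, splice in the proof to get $\mu(x)$, and close with the generalization rule (2c) to obtain $\forall x[\bq t]\;\mu(x)$.

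The main obstacle is the splicing operation itself: the sub-proof of $\phi\Rightarrow\mu$ begins with the global hypothesis \text{`Assume $\phi$'}, but inside the larger proof, $\phi$ must be obtained as an inferred conclusion (or as the conclusion of a \text{`Find'}/\text{`Fix'} step), not as a global hypothesis, since condition (IV) requires global hypotheses to appear consecutively at the very beginning. The fix is to delete the \text{`Assume $\phi$'} step from the sub-proof and redirect every dependency that pointed to it toward the newly introduced step that derives $\phi$; one must then verify that the transitivity and interval conditions (I)--(II), the variable-scoping condition (III), the pairing condition for case steps (VII), and the relative-dependence conditions (VIII)--(IX) are all preserved. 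In (C) and (D) there is an additional bookkeeping check that the variable $x$ that appears globally free in the sub-proof now plays the role of the variable introduced by the \text{`Find'} or \text{`Fix'} step, and that no other free variable of the ambient context accidentally collides with it (this can always be arranged by $\alpha$-renaming at no cost to proof length). Once this routine but tedious verification is carried out, conditions (I)--(IX) hold for the spliced proof, the required implications are theorems, and the stated length bounds follow immediately.
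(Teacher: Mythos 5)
Your proof is correct and follows exactly the route the paper intends: the paper leaves this lemma as an exercise, noting only that it ``makes crucial use of (2c)--(2e) of Definition \ref{definitionproof}'', and your constructions for (A)--(D) deploy precisely those rules (division into cases for (B), specification for (C), generalization for (D)) together with the standard conjunction/disjunction schemata, with the correct length accounting. Your identification of the splicing issue --- converting the sub-proof's global hypothesis `Assume $\phi$' into a dependency on an inferred or intermediate-hypothesis step while preserving conditions (I)--(IX) --- is the right ``tedious but routine'' core of the exercise.
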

We leave the proof as an exercise to the reader, noting that it makes crucial use of (2c)-(2e) of Definition \ref{definitionproof}.

The inference rules \eqref{CPQ}, or the rules for {\bf conservation of precision under quantification (CPQ)}, deserve further comment. They function as a partial substitute for the axiom schema of excluded middle, since they describe circumstances in which certain formulas (namely universally quantified ones) may be inferred to be either true or false. The other axiom that functions as a partial substitute for the axiom schema of excluded middle is the axiom of excluded middle for atomic formulas, i.e. (4). Together, these two axioms can be used to prove the following:

\begin{theorem}
\label{theoremSNdefinite}
Every strongly negatable statement is definite.
\end{theorem}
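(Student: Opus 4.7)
The plan is to prove the theorem by structural induction on strongly negatable formulas $\phi$, showing that $\DD(\phi)$ is provable from any standard axiomatization. The induction rests on an auxiliary lemma about terms: for every term $t$ containing no subterm constructed via a quantifier-like operator template, the formula $t\ex$ is provable. This follows by a short structural induction on $t$, using the axioms $x\ex$ and $c\ex$ for variables and constants together with the equivalence $f(t_1,\ldots,t_n)\ex \Leftrightarrow t_1\ex\wedge\cdots\wedge t_n\ex$ from clause (1) of Definition \ref{definitionstandardaxiom}. I would also note syntactically that the class of strongly negatable formulas is closed under direct-subformula extraction and under the formal negation operation, since de Morgan duality introduces no new QLOs into subterms and preserves the list of primitive relations appearing in atomic subformulas.

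For the base case, a strongly negatable atomic formula has the form $R(t_1,\ldots,t_n)$ or $\neg R(t_1,\ldots,t_n)$, where $R$ is a strongly negatable primitive relation and each $t_i$ is QLO-free. The auxiliary lemma gives $t_i\ex$ for each $i$, and then the inference rule in clause (4) of Definition \ref{definitionstandardaxiom} directly yields $\DD(R(t_1,\ldots,t_n))$; the $\neg$-variant follows by commutativity of $\vee$, since the negation of $\neg R(t_1,\ldots,t_n)$ is $R(t_1,\ldots,t_n)$.

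For the inductive step, the cases $\phi = \psi_1\wedge\psi_2$ and $\phi = \psi_1\vee\psi_2$ are routine propositional manipulations: two nested divisions into cases (clause (2e) of Definition \ref{definitionproof}) applied to the inductive hypotheses $\DD(\psi_1)$ and $\DD(\psi_2)$, combined with the introduction and elimination rules for $\wedge$ and $\vee$, assemble the desired $\DD(\phi)$. For $\phi = \forall x\;\psi(x)$ (and identically for the bounded version $\forall x\bq t\;\psi(x)$), the inductive hypothesis applied to $\psi(x)$ with $x$ treated as a global free variable, together with generalization (clause (2c) of Definition \ref{definitionproof}), yields $\forall x\;\DD(\psi(x))$, and then the CPQ rule from clause (2) of Definition \ref{definitionstandardaxiom} delivers $\DD(\forall x\;\psi(x))$. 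For the existential case $\phi = \exists x\;\psi(x)$, I would reduce to the universal case: up to commutativity of $\vee$, the formula $\DD(\exists x\;\psi(x))$ coincides with $\DD(\forall x\;\neg\psi(x))$ because the negation of $\phi$ is $\forall x\;\neg\psi(x)$ by de Morgan, and $\neg\psi$ is strongly negatable by the closure observation.

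The main obstacle is essentially bookkeeping: closure of strong negatability under the operations encountered in the induction (especially in the existential reduction, where I invoke $\neg\psi$), and the subtle point that the premise $t\ex$ required for the bounded-quantifier rules is automatic, since such a $t$ appears as a subterm of the strongly negatable formula $\phi$ and is therefore itself QLO-free, placing it within the scope of the auxiliary lemma. The argument is otherwise purely formal, with proof length linear in the size of $\phi$.
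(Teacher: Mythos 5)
Your proof is correct and follows exactly the route the paper intends: the paper gives no written-out proof, only the one-sentence indication that the atomic excluded-middle rule (clause (4)) and the CPQ rules together yield the result, and your structural induction (rule (4) at the atomic base case, CPQ plus generalization at the quantifier step, de Morgan reduction for $\exists$) is precisely that argument spelled out. The auxiliary observation that QLO-free terms provably exist is a sensible piece of bookkeeping the paper leaves implicit.
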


\subsection{Meta-axiomatizations}
Just as every proposition/term language has a metalanguage, so also every axiomatization of a proposition/term language can be extended into an axiomatization of the metalanguage.

\begin{definition}
\label{definitionmetasystem}
Let $A$ be a standard axiomatization of a list-compatible proposition/term language $\lang$. The \emph{metasystem} of $A$, denoted $A+1$, is the standard axiomatization of $\lang+1$ consisting of the axiom schemata and inference rules of $A$, together with the following inference rules concerning the primitive relation $\TT$ and the primitive function $\RR$:
\begin{align*}
\TT(\LQ \forall x[<t] \; \phi(x)\RQ) \;\;&\Leftrightarrow\;\; \forall a[<\RR(t)] \; \TT(\phi(x)|x=a),&
\TT(\phi\wedge\psi) \;\;&\Leftrightarrow\;\; \TT(\phi) \wedge \TT(\psi),\\
\TT(\LQ \exists x[<t] \; \phi(x)\RQ) \;\;&\Leftrightarrow\;\; \exists a[<\RR(t)] \; \TT(\phi(x)|x=a),&
\TT(\phi\vee\psi) \;\;&\Leftrightarrow\;\; \TT(\phi) \vee \TT(\psi),
\end{align*}
\begin{align*}
\TT([\neg]R(t_1,\ldots,t_n)) \;\;&\Leftrightarrow\;\; [\neg]R(\RR(t_1),\ldots,\RR(t_n)),\\
\RR(f(t_1,\ldots,t_n))\ex \;\;&\Rightarrow\;\; \RR(f(t_1,\ldots,t_n)) = f(\RR(t_1),\ldots,\RR(t_n)),\\
\RR(t_1)\ex,\ldots,\RR(t_n)\ex \;\;&\Rightarrow\;\; \RR(f(t_1,\ldots,t_n)) = f(\RR(t_1),\ldots,\RR(t_n)),
\end{align*}
\begin{align*}
\RR(c) &= c,&
\RR(x \; | \; x=a) &= a,
\end{align*}
\begin{equation}
\label{provabletrue}
\TT(\phi), \; \phi\Rightarrow_A \psi \;\; \Rightarrow \;\; \TT(\psi).
\end{equation}
as well as the relativizations of these inference rules to an arbitrary variable assignment $\lv\xx=\aa\rv$. Note that in \eqref{provabletrue}, the formula $\phi\Rightarrow_A \psi$ is being interpreted as a syntactic \statement\ about the strings $\phi$ and $\psi$.

We also define the \emph{weak metasystem} of $A$, denoted $(A+1)^*$, to be the system consisting of all axiom schemata and rules of inference of $A+1$ except for \eqref{provabletrue}.
\end{definition}

\subsection{Shorthand}

The definition of a ``proof'' given in Definition \ref{definitionproof} to a great extent matches what mathematicians mean when they talk about a ``proof''. However, one important difference is the common mathematical practice of allowing proofs to include their own notation and definitions. This difference is significant because allowing such shorthand may significantly decrease the size of a proof. Thus, we formalize the notion of mathematical shorthand as follows:

\begin{definition}
\label{definitionshorthand}
A \emph{proof with shorthand} in an axiom system $(\lang,A)$ consists of the following:
\begin{itemize}
\item A finite list of \emph{shorthand definitions}. A shorthand definition is an expression of the form
\[
R(\xx) \df \Phi(\xx) \;\;\;\;\; \text{ or } \;\;\;\;\; f(x) \df T(\xx),
\]
where on the left-hand side, $R$ is an undefined string (i.e. a string with no previously assigned meaning), and $\Phi$ is a schematic formula, while on the right-hand side, $f$ is an undefined string and $T$ is a schematic term. The string $R$ is subsequently interpreted as a relation and the string $f$ is subsequently interpreted as a function. The schematic formula or term appearing in a shorthand definition may contain relations or functions defined by previous shorthand definitions.
\item A proof in the sense of Definition \ref{definitionproof} for the axiom system $(\lang',A')$, where $\lang'$ is the language formed by augmenting $\lang$ with the relations and functions of the shorthand definitions, and $A'$ is the axiom system formed by augmenting $A$ with the axiom schemata $f(\xx) = T(\xx)$ and the inference rules $R(\xx) \Leftrightarrow \Phi(\xx)$.
\end{itemize}
\end{definition}

\section{Proofs}
\label{appendixproofs}

All proofs in this section should be understood as proofs in the axiom system $A+1$, where $A$ is any standard axiomatization of $\lang$, or a specific standard axiomatization if this is mentioned. If we use the inference rule \eqref{provabletrue} in a proof, then we will indicate this by writing that we assume the axiom system $A$ is ``valid''.

{\bf Note.} On a philosophical level, one can ask what the proofs in this section actually allow us to conclude. A trivial answer is that they allow us to conclude that for any proposition/term syntax $\lang$ and standard axiomatization $A$, the theorems stated in this section are theorems of the axiom system $A+1$. A more interesting answer is that whenever $\lang$ is a proposition/term \emph{language}, that is, a proposition/term syntax together with an intuitive idea of its meaning as described in Definition \ref{definitionPTlanguage}, and whenever $A$ is a \emph{valid} axiomatization of it, that is, one that formalizes accepted methods of reasoning within $\lang$, then the conclusions of the theorems of this section are \emph{true} (whenever the premises are satisfied).

\begin{definition}
\label{definitionmax}
Let $\lang$ be a proposition/term language capable of reproducing \GNT. Let $t(m)$ be a term and let $\phi(m)$ be a negatable formula, in both cases with $m$ as a free variable intended to represent a number. Then we write
\[
\LQ\max\{t(m) : m \leq n,\; \phi(m)\}\RQ \df \LQ\fst\{k : \forall m < n+1 \;\;\; \phi(m) \Rightarrow t(m) \leq k\}\RQ
\]
where $\Rightarrow$ denotes material implication (i.e. $\phi\Rightarrow\psi$ is shorthand for $\neg\phi\vee\psi$).
\end{definition}

It is easy to see that this definition captures the intuitive notion of ``the maximum of $t(m)$ over all $m\leq n$ such that $\phi(m)$ holds''.

\begin{definition}
A formula or term is \emph{quantifier-free} if it does not contain any quantifiers or quantifier-like operators. A formula is $\Sigma_1$ if it is of the form $\exists x \; \phi(x)$, where $\phi$ is a quantifier-free formula.
\end{definition}

Although the sentence `$\phi$ is true', where $\phi$ is a general formula, cannot necessarily be translated into $\lang$, we will show now how to translate this sentence in the case where $\phi$ is quantifier-free.

\begin{definition}
\label{definitionQFF}
Let $\lang$ be a list-compatible proposition/term language allowing unbounded existential quantification and let $\phi$ be a quantifier-free formula of $\lang$. A \emph{verification} of $\phi$ (relative to a given variable assignment) is an assignment of truth-values to the subformulas of $\phi$, together with an assignment of values to the subterms of $\phi$, satisfying the obvious constraints, and such that $\phi$ is assigned the truth-value `true'.

If $\Phi = \LQ \exists x \; \phi(x)\RQ$ is a $\Sigma_1$ formula, then a \emph{verification} of $\Phi$ relative to a variable assignment $\lv \yy=\bb\rv$ is a pair $(a,V)$, where $V$ is a verification of $\phi(x)$ relative to the variable assignment $\lv \yy=b,x=a\rv$.
\end{definition}

Then we can translate `$\phi$ is true' into $\lang$ as `$\phi$ has a verification'. Although the validity of this translation is intuitively clear, we note that one way to justify it is by appealing to the fact that `for all $\phi$, $\phi$ is true if and only if $\phi$ has a verification' is a theorem of $A+1$.

In the theorems below, when we say that one strategy wins against another, we mean that this is true for all sufficiently large values of $n$. In fact, numbers such as $n=10^{100}$ are sufficiently large for this purpose (given physically realistic constraints on the setup), though we will not prove this explicitly.

\begin{theorem}[Cf. Theorem \ref{theoremrecstrat}]
\label{theoremrecstratB}
Let $\lang$ be a classical list-compatible proposition/term language. Then for each number $n$, there is a term $\phi(n)$ of $\lang$ of length $O(n)$ whose referent is the largest number that can be represented in $\lang$ by a term of length $\leq n$. The term $\phi(n)$ can be written explicitly.
\end{theorem}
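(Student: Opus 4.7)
The plan is a Berry-style diagonalization. I would define
\[
\phi(n) \df \max\{k : \exists s \leq c^n \;\; \mathrm{Term}_{\leq n}(s) \wedge \mathrm{Ref}(n, s, \emptyset) = k\},
\]
where $\mathrm{Term}_{\leq n}(s)$ says ``$s$ is the G\"odel number of a well-formed term of $\lang$ of length at most $n$'' and $\mathrm{Ref}(d, s, V)$ is a schematic formula returning the referent, under variable assignment $V$, of the term encoded by $s$, valid whenever that term has QLO-depth at most $d$.

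First I would fix a G\"odel encoding of $\lang$'s syntax by exploiting list-compatibility to reproduce $\GNT$ inside $\lang$; this makes every syntactic predicate (well-formedness, length, subterm extraction, substitution, variable-assignment manipulation) available as a schematic formula of constant length. The bound $c^n$ and the numeral $n$ inside $\mathrm{Term}_{\leq n}$ then contribute $O(\log n)$ symbols through binary numerals and the exponentiation primitive.

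The central step is constructing $\mathrm{Ref}(d, s, V)$ together with a companion truth predicate $\mathrm{TV}(d, s, V)$ as schematic formulas whose length is independent of $d$ and $s$. This is the usual construction of a partial satisfaction relation: one defines them by bounded recursion on $(d, s)$, arithmetizing the recursion via G\"odel's $\beta$-function so that the entire computation trace is existentially witnessed by a single sequence-code instead of being syntactically unfolded. Each case mirrors one template of $\lang$'s syntax: atomic formulas defer to $\lang$'s primitive relations; propositional and quantifier cases use $\lang$'s own $\wedge, \vee, \forall, \exists$ applied to recursive values (classicality supplying the unbounded quantifiers); and each QLO case instantiates that QLO's reductive interpretation (Definition \ref{definitionPTlanguage}), substituting recursive invocations of $\mathrm{Ref}(d-1, \cdot, \cdot)$ and $\mathrm{TV}(d-1, \cdot, \cdot)$ for the subterm and subformula placeholder slots. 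Since $\lang$ has only finitely many templates, the total overhead is $O(1)$, so $|\phi(n)| = O(\log n) \subseteq O(n)$. Correctness---that $\phi(n)$ refers to the largest number nameable by a term of length $\leq n$---then follows by induction on the structure of $\lang$-terms, using Theorem \ref{theoremSNdefinite} at each step to guarantee that $\mathrm{Ref}$ evaluates to a definite answer.

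The main obstacle is handling QLOs whose reductive interpretation admits multiple witnesses or none. The definition of $\mathrm{Ref}$ must explicitly refuse to assign a value when existence or uniqueness of the witness fails, so that the outer $\max$ (interpreted via Definition \ref{definitionmax}) correctly skips non-referring terms. The Russell reformulation (Definition \ref{definitionrussellreformulation}) furnishes the right schematic template to encode both conditions uniformly across all QLOs, and classicality (Lemma \ref{lemmarusselldefinite}) ensures that the resulting predicate is strongly negatable and hence definite. A secondary point is that Definition \ref{definitionmax} unfolds $\max$ via material implication, so one must check that values of $k$ not witnessed by any valid term are vacuously excluded from the supremum; this again reduces to excluded middle for strongly negatable formulas.
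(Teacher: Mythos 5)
Your overall architecture (a $\max$ over codes of terms of length $\leq n$, guarded by a definiteness check so that non-referring terms are skipped, with the Russell reformulation and Theorem \ref{theoremSNdefinite} supplying the guard) matches the paper's construction, but the central step fails. You claim that $\mathrm{Ref}(d,s,V)$ and $\mathrm{TV}(d,s,V)$ can be built as schematic formulas of length \emph{independent} of $d$ and $s$ by coding the recursion's computation trace with G\"odel's $\beta$-function. That device works only when the evaluation tree is a finite object codable by a single number. Here $\lang$ is classical, so formulas contain unbounded quantifiers, and the clause for `$\forall x \; \psi(x)$' makes the truth of one node depend on one child per element of the domain of discourse --- infinitely many --- so no finite sequence-code witnesses the trace, and the quantifier complexity (hence the length) of any partial truth predicate must grow with the quantifier depth of the formulas it evaluates. (QLO-depth, the parameter you recurse on, is the wrong one: QLOs are eliminable via their reductive interpretations, but unbounded quantifier alternations are not.) The resulting $O(\log n)$ bound is not merely unproved but contradictory: for large $n$ the term `$\phi(n)+1$' would then have length $\leq n$ and would name a number exceeding the largest number nameable by any term of length $\leq n$ --- the Berry paradox. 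The $O(n)$ in the statement is essential; in fact $|\phi(n)| \geq n - O(1)$, precisely because `$\phi(n)+1$' must cost more than $n$ symbols.

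The paper's proof pays the linear cost up front: each term $t$ with $|t|\leq n$ is normalized to the provably equivalent form `$\elt\{x : \forall y_1 \, \exists z_1 \cdots \forall y_n \, \exists z_n \;\; \phi_{t,n}(x,y_1,\ldots,y_n,z_1,\ldots,z_n)\}$' with $\phi_{t,n}$ quantifier-free and computable from $(t,n)$, and the defining term $\phi(n)$ writes those $n$ quantifier alternations out \emph{explicitly in the syntax of $\lang$}, applying a fixed-length satisfaction predicate (the ``has a verification'' notion of Definition \ref{definitionQFF}) only to the quantifier-free matrix. Your supporting observations about the reductive interpretations, strong negatability, and the material-implication reading of Definition \ref{definitionmax} are all the right ingredients, but they must sit inside that explicitly unfolded $2n$-quantifier prefix rather than inside a constant-length truth formula.
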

\begin{proof}
We leave it to the reader to check:
\begin{itemize}
\item Any term $t$ in $\lang$ is provably equivalent to a term of the form `$\elt \{x : \Phi_t(x)\}$', where $\Phi_t$ is a formula with one free variable. Here, two terms $t_1$ and $t_2$ are said to be \emph{provably equivalent} if the formulas $t_1 = x$ and $t_2 = x$ are provably equivalent, where $x$ is a variable that does not appear free in $t_1$ or $t_2$, with respect to any standard axiomatization of $\lang$.
\item For any $n\geq |t|$, the formula $\Phi_t$ can be chosen to take the form
\begin{equation}
\label{m1mn}
\Phi_t(x) = \LQ\forall y_1 \;\exists z_1 \; \cdots \; \forall y_n \;\exists z_n \;\; \phi_{t,n}(x,y_1,\ldots,y_n,z_1,\ldots,z_n)\RQ,
\end{equation}
where $\phi_t$ is a quantifier-free formula with $2n+1$ free variables. (For more details see the proof of Theorem \ref{theoremrecstrat2B} below.)
\item There is an algorithm that computes $\phi_{t,n}$ as a function of $t$ and $n$. In particular, the function $(t,n)\mapsto \phi_{t,n}$ is definable in \NT\ and thus also in $\lang$.
\end{itemize}
Using Definition \ref{definitionQFF}, we can see that the term
\[
\phi(n) \df \LQ\max\{\text{Val}_n(t) : t\in L,\; |t| \leq n,\; \lb\text{Val}_n(t)\in\N\rb\}\RQ
\]
has the desired property, where $\text{Val}_n(t)$ is shorthand for
\[
\elt\{x : \forall y_1 \;\exists z_1 \; \cdots \; \forall y_n \;\exists z_n \;\; \text{$\phi_{t,n}$ is satisfied by $x,y_1,\ldots,y_n,z_1,\ldots,z_n$}\}
\]
and $\lb \phi\rb$ denotes the Russell reformulation of a \statement\ $\phi$ (cf. Definition \ref{definitionrussellreformulation}).
\end{proof}

\begin{theorem}[Cf. Theorem \ref{theoremaxiomstrat}]
\label{theoremaxiomstratB}
Let $A$ be a valid axiomatization of a proposition/term language $\lang$ capable of reproducing \GNT. For each $n$, there is a program of length $O(\log(n))$ encoding an algorithm that succeeds at computing the largest number which is the output of some algorithm for which there is a proof in $A$ of length\Footnote{Here we define the length of a proof to be the sum of the lengths of its steps.} $\leq n$ that this algorithm halts. This program can be written explicitly.
\end{theorem}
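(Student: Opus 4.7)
The plan is to construct $\alphaA{n}$ as a short meta-program whose only $n$-dependent component is a compact numeral for $n$, with the rest being a fixed ``proof-searching'' procedure of constant size (depending only on $\lang$ and $A$).

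First I would fix standard encodings of strings, algorithms, and would-be proofs in $A$ as natural numbers; these are available because $\lang$ reproduces \GNT\ and in particular the ambient programming language can manipulate strings. Under this encoding, there is a constant-size subroutine $\mathsf{Check}$ that, given $\rho$, decides whether $\rho$ is a valid proof in the sense of Definition \ref{definitionproof}: every ingredient of that definition (axiom schemata, inference rule matching, the step-dependence bookkeeping in (I)--(IX), and the arithmetical side-conditions on bound variables) is primitive recursive, so this check is routine and its code size depends only on $A$. Similarly, there is a fixed subroutine $\mathsf{Extract}$ which, given a valid $\rho$ whose global conclusion is syntactically of the form ``the algorithm $\beta$ halts'' (a $\Sigma_1$ statement about $\beta$ expressible in the reproduction of \GNT\ inside $\lang$), returns the string $\beta$; and a fixed universal simulator $\mathsf{Run}$ which executes $\beta$ and returns its output whenever $\beta$ halts.

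The program $\alphaA{n}$ is then: (i) hard-code $n$ in binary; (ii) initialise $M:=0$; (iii) enumerate lexicographically all strings $\rho$ with $|\rho|\le n$, and for each one run $\mathsf{Check}(\rho)$; if $\rho$ passes as a proof of ``$\beta$ halts'' for some $\beta$, set $\beta:=\mathsf{Extract}(\rho)$, compute $y:=\mathsf{Run}(\beta)$, and update $M:=\max(M,y)$; (iv) after the loop, return $M$. The length accounting is then immediate: the loop logic together with $\mathsf{Check}$, $\mathsf{Extract}$, $\mathsf{Run}$, and the comparison is a fixed constant-size block depending only on $\lang$ and $A$, while the binary numeral for the bound $n$ contributes $O(\log n)$ symbols, giving $|\alphaA{n}| = O(\log n)$ overall.

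The main subtle point -- and the place where the hypothesis that $A$ is \emph{valid} is actually used -- is the $\mathsf{Run}(\beta)$ call inside step (iii). Without validity, $A$ might syntactically prove the false statement that some nonhalting $\beta$ halts, and then $\alphaA{n}$ would hang forever on that simulation and fail to return. Validity of $A$ means that every syntactic proof in $A$ is a semantic proof, so every $\beta$ extracted in (iii) genuinely halts, each $\mathsf{Run}$ call terminates, and the outer loop finishes. Correctness is then automatic: the set over which the maximum is taken is exactly the set of outputs of algorithms whose halting is provable in $A$ by a proof of length $\le n$, which is the definition of $\fA{n}$. I expect the only delicate steps to be the precise formalisation of $\mathsf{Check}$ against Definition \ref{definitionproof} (purely mechanical but tedious) and this termination-by-validity argument; the $O(\log n)$ bound itself is essentially a bookkeeping observation once $n$ is stored compactly rather than in unary.
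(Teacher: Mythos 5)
Your proposal is correct and is essentially the paper's own argument: enumerate all strings of length at most $n$, check each for being a proof in $A$ that some algorithm halts, run the extracted algorithms, return the maximum, with the $O(\log(n))$ bound coming from encoding $n$ compactly and the termination guarantee coming from the validity of $A$ (which the paper formalizes as an instance of the inference rule \eqref{provabletrue} in the metasystem $A+1$). The only cosmetic difference is that you spell out the $\mathsf{Check}$/$\mathsf{Extract}$/$\mathsf{Run}$ decomposition and the binary-numeral bookkeeping that the paper leaves implicit.
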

\begin{proof}
The algorithm in question can be given as follows: For each string of $n$ symbols, check whether that string is a proof in $A$ that some algorithm $\alpha$ halts. If it is, run $\alpha$ and store the value it returns. After all strings have been checked, return the largest stored value.

This algorithm can be encoded as a program whose length is equal to a constant plus the number of symbols needed to encode the number $n$, which is $O(\log(n))$.

Finally, to prove that the algorithm halts, we need to verify that whenever there is a proof in $A$ that an algorithm $\alpha$ halts of length at most $n$, then the algorithm $\alpha$ in fact halts. But this is a special case of \eqref{provabletrue}.
\end{proof}

\begin{corollary}[Cf. Corollary \ref{corollaryaxiomstrat}]
\label{corollaryaxiomstratB}
Let $(L_1,A_1)$ and $(L_2,A_2)$ be valid axiom systems, and suppose that the axiom system $(L_1,A_1)$ is at least as powerful as the metasystem $(L_2+1,A_2+1)$ in the sense that $L_1$ is capable of reproducing $L_2+1$, and all axioms and rules of inference of $A_2+1$ are theorems of $A_1$. Then in the Busy Beaver contest, the axiomatic strategy of $(L_1,A_1)$ wins agains the axiomatic strategy of $(L_2,A_2)$.
\end{corollary}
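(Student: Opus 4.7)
The plan is to show that the number $f_{A_1}(10^{100})$ output by the axiomatic strategy of $(L_1,A_1)$ strictly exceeds the number $f_{A_2}(10^{100})$ output by the axiomatic strategy of $(L_2,A_2)$. I will do so by exhibiting an explicit algorithm $\alpha'$ of length $O(\log(10^{100}))$ whose output is $f_{A_2}(10^{100})+1$, together with an $A_1$-proof that $\alpha'$ halts of length much less than $10^{100}$. Since $\alpha'$ is then eligible for the max defining $f_{A_1}(10^{100})$, one concludes $f_{A_1}(10^{100})\geq f_{A_2}(10^{100})+1$, so the $A_1$-player beats the $A_2$-player.

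For $\alpha'$, take the program that first runs $\alpha_{A_2}\{10^{100}\}$ (which is short by Theorem \ref{theoremaxiomstratB}) and then adds one to its output. To obtain the halting proof, first apply Theorem \ref{theoremaxiomstratB} to $(L_2,A_2)$ at $n=10^{100}$, yielding a proof in the metasystem $A_2+1$ that $\alpha_{A_2}\{10^{100}\}$ halts. The key observation, read off from the proof of that theorem, is that this proof is \emph{schematic in $n$}: the numeral for $n$ enters only through an $O(\log n)$-symbol substitution, and the surrounding reasoning (which boils down to a single application of the inference rule \eqref{provabletrue}) has constant size. So the $A_2+1$-proof has length $O(\log(10^{100}))+C_2$ for some $C_2$ depending only on $(L_2,A_2)$, and appending a constant-size increment step yields a proof of the same order that $\alpha'$ halts.

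Next, translate this $A_2+1$-proof into an $A_1$-proof using the hypothesis of the corollary. The hypothesis supplies two ingredients: (a) a fixed syntactic map $\phi\mapsto\widehat\phi$ from $L_2+1$-formulas to $L_1$-formulas, realising the reproduction of $L_2+1$ in $L_1$; and (b) for every axiom schema and inference rule of $A_2+1$, a fixed $A_1$-derivation of the translated schema or rule, whose length is independent of the particular instance. Replacing each step of the $A_2+1$-proof by the corresponding bounded $A_1$-subproof produces an $A_1$-proof of halting of $\alpha'$ of length $O(\log(10^{100}))+C_1$, which is vastly less than $10^{100}$. This completes the comparison, and all objects involved are of size $O(\log(10^{100}))$, hence physically implementable.

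The step requiring the most care will be formalizing the step-by-step translation of the previous paragraph, i.e.\ verifying that every inference rule of $A_2+1$ -- in particular the rule \eqref{provabletrue}, the CPQ rules of \eqref{CPQ}, and the $\TT$- and $\RR$-rules of Definition \ref{definitionmetasystem} -- expands into an $A_1$-subproof of size bounded by a constant depending only on the rule itself. This breaks into an induction on formula complexity that $\widehat{\cdot}$ commutes with each formula constructor up to bounded overhead, and a finite case check (one case per rule) that each rule's translation is an $A_1$-theorem with a fixed-size proof. Once these ingredients are in place, the length accounting needed to fit inside the $f_{A_1}(10^{100})$-budget is immediate.
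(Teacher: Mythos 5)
Your proposal is correct and follows essentially the same route as the paper's own proof: invoke Theorem \ref{theoremaxiomstratB} for $(L_2,A_2)$, observe that the halting proof of $\alpha_{A_2}\{n\}$ lives in $A_2+1$ with $O(\log n)$ symbols, translate it into $A_1$ via the reproduction hypothesis, and conclude that the $A_2$-strategy's output is among the terms in the maximum computed by the $A_1$-strategy. Your ``add one'' modification and the explicit bookkeeping of the schema-by-schema translation are refinements the paper leaves implicit (it settles for the output of $\rho_n(A_2)$ being a term in the maximum computed by $\rho_m(A_1)$ for $m \gg \log(n)$), but they do not change the underlying argument.
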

\begin{proof}
Let $\rho_n(A_2)$ be the algorithm given by the previous theorem. Then the previous proof shows that the \statement\ `$\rho_n(A_2)$ halts and outputs a number' can be proven in $A_2+1$, and thus in $A_1$, using $O(\log(n))$ symbols. Consequently for all $m\gtrgtr \log(n)$, the output of $\rho_n(A_2)$ is a term in the maximum computed by $\rho_m(A_1)$. So the output of $\rho_m(A_1)$ is larger than the output of $\rho_n(A_2)$.
\end{proof}

\begin{theorem}[Cf. Theorem \ref{theoremselfmeta}]
\label{theoremselfmetaB}
Let $L_1$ and $L_2$ be proposition/term languages, such that $L_1$ is capable of reproducing $L_2$. Let $A$ be a valid standard axiomatization of $L_1$. Then for each $n$, there is a term $\phi(n)$ of $L_2+1$ of length $O(\log(n))$ which succeeds at referring to the largest number named by any term $t$ such that $A$ proves that $t$ succeeds at naming a number using less than $n$ symbols. This term can be written explicitly. If $L_2$ is selfmeta, the term can be translated into $L_2$.
\end{theorem}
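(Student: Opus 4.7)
The plan is to explicitly construct $\phi(n)$ as a bounded max of the form in Definition \ref{definitionmax}, applied to the reference operator $\RR$ of $L_2+1$, ranging over $L_2$-terms that possess a short proof of existence in $A$; then the validity of $A$ (formalized by the inference rule \eqref{provabletrue} inside $A+1$) will guarantee that the max exists and equals the desired number.

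First I would exploit the reproduction hypothesis. Since $L_1$ reproduces $L_2$, there is a computable translation $\iota$ sending each $L_2$-term $t$ to an $L_1$-term $\iota(t)$ with the same referent. Since $A$ is a standard axiomatization, the predicate ``$p$ is a proof in $A$ of the $L_1$-formula $\psi$'' is a decidable syntactic predicate on strings; by list-compatibility of $L_2+1$, it is expressible as a quantifier-free schematic formula $\mathrm{Proof}_A(p,\psi)$ in $L_2+1$. Writing $N$ for the numeral encoding $n$ (of length $O(\log n)$), I define
\[
\Phi_N(t) \;\df\; \exists p \leq C^N \;\; \mathrm{Proof}_A\big(p,\;\LQ \iota(t)\ex\RQ\big),
\]
where $C$ is a constant depending on the alphabet; thus $\Phi_N(t)$ expresses ``$t$, viewed as an $L_2$-term, is proved in $A$ to succeed at naming a number, by a proof of length $\leq n$''. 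The template is fixed and $N$ appears a bounded number of times, so $|\Phi_N|=O(\log n)$.

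Second, set
\[
\phi(n) \;\df\; \max\big\{\RR(t) \;:\; t \leq C^N,\; \Phi_N(t)\big\},
\]
where $\max$ is unfolded via Definition \ref{definitionmax} into the $\fst$-operator of $L_2+1$, and where the bounded quantifier $t \leq C^N$ encodes ``$t$ is a string of length $\leq N$''. Since every $n$-dependent subexpression is the numeral $N$ (or $C^N$, which is itself $O(\log n)$ symbols using the primitive exponentiation inherited from $\GNT$), the total length is $O(\log n)$. To see that $\phi(n)$ succeeds at naming a number, note (i) the set being maximized is finite, being contained in the strings of length $\leq N$, so the $\fst$ unfolding of $\max$ does terminate at some $k$; and (ii) for each $t$ satisfying $\Phi_N(t)$, the rule \eqref{provabletrue} in $A+1$ (the formalization of the validity of $A$) yields that $\iota(t)\ex$ is true, hence $t$ refers to something and $\RR(t)$ exists and is a number. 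The value is by construction the largest such $\RR(t)$, which is exactly what the theorem requires. The selfmeta clause is then immediate: if $L_2$ is selfmeta then $L_2$ reproduces $L_2+1$ by definition, so the constructed term can be translated back into $L_2$ with only a constant-factor increase in length.

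The main obstacle is bookkeeping rather than conceptual: carefully arranging the syntactic predicate $\mathrm{Proof}_A$, the translation $\iota$, and the invocation of $\RR$ so that their schemas are of fixed size and the only $n$-dependence flows through the numeral $N$, so that the $O(\log n)$ bound survives; and second, ensuring the application of \eqref{provabletrue} composes correctly through $\iota$ (which is why ``valid axiomatization'' was assumed in the hypothesis). Once the syntactic encodings are set up, the verification that $\phi(n)$ has the claimed referent is a direct unfolding of definitions against \eqref{provabletrue}.
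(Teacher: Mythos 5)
Your proposal is correct and follows essentially the same route as the paper's proof: translate the (strongly negatable, hence definite) syntactic predicate ``$A$ proves in $<n$ symbols that $t$ names a number'' into $L_2+1$, take the bounded maximum of $\RR(t)$ over such $t$ via Definition \ref{definitionmax}, observe that all $n$-dependence flows through a numeral of length $O(\log n)$, and invoke \eqref{provabletrue} to conclude that each such $t$ really has a referent so the term succeeds. The paper's version is just a terser sketch of the same argument; your extra bookkeeping (the translation $\iota$ and the explicit proof predicate) fills in details the paper leaves implicit.
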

\begin{proof}
Clearly, the formula `$A$ proves that $t$ succeeds at naming a number using less than $n$ symbols' can be translated into \GNT, and thus in $L_2+1$, as a strongly negatable formula, and by Theorem \ref{theoremSNdefinite}, such a formula is definite. This implies that the phrase `the largest number ... less than $n$ symbols' can be translated into $L_2+1$ in a straightforward way using Definition \ref{definitionmax}. The length of the resulting term will be equal to a constant times the number of symbols needed to name $n$ in $L_2+1$, which is $O(\log(n))$.

Finally, to prove that the resulting term names a number, we need to verify that whenever there is a proof in $A$ that a term $t$ names a number, then $t$ in fact names a number. Again, this is a special case of \eqref{provabletrue}.
\end{proof}

\begin{corollary}[Cf. Corollary \ref{corollaryselfmeta}]
\label{corollaryselfmetaB}
Let $\lang_*$ be a selfmeta language, and let $(L_1,A_1)$ and $(L_2,A_2)$ be valid axiom systems capable of reproducing $\lang_*$. Suppose in addition that $A_1$ is at least as powerful as the metasystem $A_2+1$. Then in the largest number contest of $\lang_*$, the axiomatic strategy of $A_1$ wins against the axiomatic strategy of $A_2$.
\end{corollary}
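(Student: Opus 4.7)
The plan is to mirror the proof of Corollary \ref{corollaryaxiomstratB} essentially verbatim, substituting the selfmeta-language terms $\psi_n(A)$ supplied by Theorem \ref{theoremselfmetaB} for the Busy Beaver programs $\rho_n(A)$. First I would apply Theorem \ref{theoremselfmetaB} twice, once for $i=1$ and once for $i=2$, taking the theorem's ``$L_2$'' to be $\lang_*$ itself and its ``$L_1$'' to be $L_i$; the hypothesis that $(L_i,A_i)$ reproduces $\lang_*$ supplies the reproduction requirement, and selfmetaness of $\lang_*$ lets us translate each resulting term from $\lang_*+1$ back into $\lang_*$. This gives, for each $n$, a term $\psi_n(A_i)$ of $\lang_*$ of length $O(\log n)$ whose referent is the largest number named by any term that $A_i$ proves names a number in fewer than $n$ symbols. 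The two axiomatic strategies submit $\psi_{10^{100}}(A_1)$ and $\psi_{10^{100}}(A_2)$.

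Next I would extract from the proof of Theorem \ref{theoremselfmetaB} the fact that ``$\psi_n(A_2)$ succeeds at naming a number'' admits a proof of length $O(\log n)$ in the metasystem $A_2+1$; the only nontrivial ingredient is the inference rule \eqref{provabletrue}, which is built into $A_2+1$. By hypothesis every axiom and rule of inference of $A_2+1$ is a theorem of $A_1$, so this proof transports into $A_1$ with only a constant-factor blowup, yielding a proof in $A_1$ of length $O(\log n)$ that $\psi_n(A_2)$ names a number.

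Setting $n=m=10^{100}$, the transported proof has length $O(\log m) \ll m$, so $\psi_n(A_2)$ is one of the terms entering the maximum defining $\psi_m(A_1)$, giving $[\psi_m(A_1)] \geq [\psi_n(A_2)]$. To upgrade this to a strict inequality, I would observe that the term $\psi_n(A_2)+1$ is also of length $O(\log n)$ in $\lang_*$ (since $\lang_*$ reproduces \GNT\ via selfmetaness/list-compatibility) and admits a proof in $A_1$ of length $O(\log n)$ that it names a number, obtained by appending a single arithmetical step to the previous proof. Hence $\psi_n(A_2)+1$ is itself a candidate for the maximum, and $[\psi_m(A_1)] \geq [\psi_n(A_2)]+1 > [\psi_n(A_2)]$, so $A_1$'s strategy wins.

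The main obstacle, such as it is, will be bookkeeping: verifying that the constant hidden in the $O(\log n)$ proof length in $A_1$ is small enough that with $m = 10^{100}$ the candidate term really does fit within the $m$-symbol budget governing the maximum defining $\psi_m(A_1)$. This reduces to counting symbols in (i) the translation of the $A_2+1$-proof through the reproduction of $L_2+1$ in $L_1$, and (ii) the appended arithmetical step that adds $1$. Both counts are routine given the standing hypotheses, and the argument otherwise carries no new content beyond that of Corollary \ref{corollaryaxiomstratB}.
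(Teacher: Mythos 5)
Your proposal follows essentially the same route as the paper's own proof: invoke Theorem \ref{theoremselfmetaB}, observe that the validity of $A_2$'s entry is provable in $A_2+1$ and hence in $A_1$ in $O(\log n)$ symbols, and conclude that $A_2$'s term enters the maximum defining $A_1$'s term. Your extra ``$+1$'' step to force strict inequality is a small refinement the paper leaves implicit, but it does not change the argument.
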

\begin{proof}
Let $t_n(A_2)$ be the term given by the previous theorem. Then the previous proof shows that the \statement\ `$t_n(A_2)$ names a number' can be proven in $A_2+1$, and thus in $A_1$, using $O(\log(n))$ symbols. Consequently the number named by $t_n(A_2)$ appears as a term in the maximum occuring in the term $t_m(A_1)$, whenever  $m \geq O(\log(n))$. So the number named by $t_m(A_1)$ is larger than the number named by $t_n(A_2)$.
\end{proof}

\begin{theorem}[Cf. Theorem \ref{theoremlingstrat}]
\label{theoremlingstratB}
Let $L_1$ and $L_2$ be two proposition/term languages, at least one of which is classical, and suppose that $L_1$ is capable of reproducing the metalanguage $L_2 + 1$. Then for any $n$, the phrase `the largest number that can be represented in $L_2$ using at most $n$ symbols' can be translated into $L_1$ as a term of length $O(\log(n))$ which succeeds at naming a number. In particular, in any philosophical largest number contest in which both strategies are considered valid, the linguistic strategy of $L_1$ will win against the linguistic strategy of $L_2$.
\end{theorem}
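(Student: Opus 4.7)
The plan is to mimic the proof of Theorem \ref{theoremselfmetaB}, with the reference function $\RR = \RR_{L_2}$ of $L_2+1$ playing the role that the proof-search apparatus plays there. Because $L_1$ reproduces $L_2+1$ by hypothesis, every primitive of $L_2+1$ is available in $L_1$; in particular $L_1$ contains the syntactic predicate ``$t$ is an $L_2$-term with $|t|\le n$'' (computable, hence available through the reproduction of \GNT), the reference function $\RR$, and the compound predicate ``$\RR(t)\ex$ and $\RR(t)\in\N$''. Using Definition \ref{definitionmax}, I would propose as the translation the term
\[
\phi(n) \;\df\; \max\bigl\{\RR(t) : t\in L_2,\; |t|\le n,\; \RR(t)\ex \wedge \RR(t)\in\N\bigr\}.
\]
Unwinding the $\max$ produces a $\fst\{k : \forall t\ldots\}$ expression in which the only subexpression whose size depends on $n$ is the numeral for $n$; so $|\phi(n)|=O(\log n)$ by inspection.

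The main step will be to prove, in $A+1$ (where $A$ is any standard axiomatization of $L_1$), that $\phi(n)$ succeeds in naming a number, i.e.\ that the bounded-universal body inside the $\fst$ is definite for each candidate $k$, and that sufficiently large $k$ satisfy it. Existence of a satisfying $k$ is easy: there are finitely many strings of length $\le n$, hence finitely many referents in play, and a finite set of numbers has a maximum. Definiteness is where the hypothesis that one of $L_1,L_2$ is classical is used. In the case that $L_1$ is classical, I would replace each subformula of the bounded-universal body by its Russell reformulation in $L_1$ (Definition \ref{definitionrussellreformulation}); by Lemma \ref{lemmarusselldefinite} it is strongly negatable, and by Theorem \ref{theoremSNdefinite} it is definite. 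In the case that $L_2$ is classical, I would apply Lemma \ref{lemmarusselldefinite} and Theorem \ref{theoremSNdefinite} inside a standard axiomatization of $L_2$ to conclude that ``$\RR(t)\ex$'' and ``$\RR(t)\in\N$'' are provably definite in $L_2+1$, and then transfer this definiteness through the reproduction into $L_1$. Either way, the CPQ rule \eqref{CPQ} lifts pointwise definiteness to the bounded-universal formula inside $\fst$.

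The contest claim then follows immediately: if $N$ is the physical writing bound, the linguistic strategy of $L_1$ can submit $\phi(n)$ for any $n$ with $|\phi(n)|\le N$, and hence for $n$ exponentially large in $N$, so the referent of $\phi(n)$ dominates every number named by any $L_2$-term of length $\le n$, in particular every number obtainable by the linguistic strategy of $L_2$ under the same bound $N$. I expect the subtlest point to be the second branch of the definiteness argument --- rigorously transferring ``provably definite in a standard axiomatization of $L_2+1$'' across the reproduction into ``provably definite in $A+1$ over $L_1$'' --- since it requires spelling out exactly what the reproduction is required to preserve in the definition of ``capable of reproducing''. Everything else is either a routine length count or a direct invocation of Definitions \ref{definitionmax} and \ref{definitionrussellreformulation}, Lemma \ref{lemmarusselldefinite}, Theorem \ref{theoremSNdefinite}, and the CPQ rule.
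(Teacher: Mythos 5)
Your proposal is correct and follows essentially the same route as the paper's own (much terser) proof: translate the syntactic predicate and the reference function $\RR$ of $L_2+1$ into $L_1$, apply Definition \ref{definitionmax}, use the classicality of $L_1$ or $L_2$ (via Russell reformulations, Lemma \ref{lemmarusselldefinite}, Theorem \ref{theoremSNdefinite}, and CPQ) to get definiteness of the body of the $\fst$, count $O(\log n)$ symbols, and conclude via the near-optimality of the brute force strategy of $L_1$. The only nitpick is that the linguistic strategy of $L_1$ does not literally ``submit $\phi(n)$'' but rather the brute-force term of $L_1$, whose referent dominates that of $\phi(n)$ since $\phi(n)$ is itself an $L_1$-term within the length budget --- exactly the near-optimality step the paper makes explicit.
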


\begin{proof}
First of all, the formula `$t$ is a term of $L_2$ that succeeds at naming a number using less than $n$ symbols' can be translated into the metalanguage of $L_2$. If $L_2$ is classical, then this formula is definite and negatable in $L_2+1$, and if $L_1$ is classical, the formula is definite and negatable in $L_1$. Thus using Definition \ref{definitionmax}, the phrase given in the theorem can be translated into $L_1$. It is easy to see that the term transcribing this phrase is of length $O(\log(n))$ and succeeds at naming a number.

Thus, there is a strategy for the largest number contest of $L_1$ which will win against any entry for the largest number contest of $L_2$, and in particular against the linguistic strategy. Since the linguistic strategy of $L_1$ is nearly optimal, it will also win against the linguistic strategy of $L_2$.
\end{proof}

The following theorem is motivated by Theorem \ref{theoremrecstrat2}, but importantly different from it:

\begin{theorem}
\label{theoremrecstrat2C}
Let $A$ be a standard axiomatization of $\lang=\GNT$. Then for each $n$, the implication
\begin{equation}
\label{provabletruen}
\forall \phi,\psi,\lv\xx=\aa\rv \;\;\;\;\;\; \TT(\phi(\xx) \;|\; \xx=\aa),\; \phi \Rightarrow_{A,n} \psi \;\;\Rightarrow\;\; \TT(\psi(\xx) \;|\; \xx=\aa)
\end{equation}
can be proven in $A$ using $O(n\log(n))$ symbols. Here $\phi \Rightarrow_{A,n} \psi$ means `the implication $\phi\Rightarrow\psi$ can be proven in $A$ using $\leq n$ nested inductions'.
\end{theorem}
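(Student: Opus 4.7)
The plan is to construct, for each $n$, an explicit proof $P_n$ in $A$ of the theorem's conclusion by concatenating proofs of intermediate lemmas $L_0, L_1, \ldots, L_n$, where $L_k$ asserts that any valid proof $\rho$ of $\phi \Rightarrow \psi$ in $A$ of nested induction depth $\leq k$ preserves truth: $\TT(\phi \;|\; \xx=\aa)$ implies $\TT(\psi \;|\; \xx=\aa)$. Each $L_k$ is a meta-level implication with global free variables $\rho, \phi, \psi, \xx, \aa$ rather than a formula of \GNT, and the sequence $L_0, L_1, \ldots, L_n$ is produced by a meta-induction on $k$.

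\emph{Base case.} The proof of $L_0$ is by a single induction within $A$ on the step number $i$ of $\rho$, with the invariant that for every step $j \leq i$, the conclusion of step $j$ is $\TT$-true under every variable assignment extending $\lv\xx=\aa\rv$ that is consistent with the intermediate Fix/Find/Case hypotheses upon which step $j$ depends. The inductive step is a case analysis on the step type from Definition \ref{definitionproof}: global hypothesis (discharged by the global hypothesis $\TT(\phi)$), atomic/propositional axiom or rule, specification, division into cases, and the intermediate Fix/Find/Case steps themselves. Each case is verified using the compositional rules for $\TT$ from Definition \ref{definitionmetasystem}, which convert each syntactic rule of $A$ into a corresponding provable truth-preservation fact. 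Since $A$ has only finitely many rule types other than induction, this base proof has size $O(1)$.

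\emph{Inductive step.} To promote $L_k$ to $L_{k+1}$ we repeat the same inner induction on step number and add a single new case for the induction rule (2f). If step $j$ derives $\mu(t)$ by induction, then by the definition of nested induction depth the relatively-dependent sub-derivation from ``Fix $m$ such that $\mu(m)$'' to ``Then $\mu(m+1)$'' has depth $\leq k$; applying $L_k$ to this sub-derivation yields, for each $m$, that $\TT(\mu(m))$ implies $\TT(\mu(m+1))$. Combined with $\TT(\mu(0))$ from the inner inductive hypothesis and $t\ex$ from a prior step, we apply $A$'s induction rule with induction formula $\TT(\mu(m))$ to conclude $\TT(\mu(t))$. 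Writing out this extension requires naming $L_k$ and the new depth $k+1$, costing $O(\log k)$ symbols.

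\emph{Length and main obstacle.} After $n$ such extensions, $L_n$ has been derived; the theorem's conclusion follows by applying specification to the global hypothesis $\phi \Rightarrow_{A,n} \psi$ to obtain a concrete witness proof $\rho$ of depth $\leq n$, and then invoking $L_n$. The total proof length is $O(1) + \sum_{k=1}^{n} O(\log k) = O(n \log n)$. The main obstacle is stating the inner invariant so that it correctly tracks the Fix/Find/Case branching and the variable assignments introduced at intermediate hypotheses; once this invariant is in place, its propagation through non-induction steps is a routine (if tedious) finitely-many-cases verification, and the new induction case at depth $k+1$ cleanly reduces to a single use of $A$'s induction rule together with one invocation of $L_k$ on the sub-derivation.
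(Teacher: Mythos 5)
Your argument is essentially the paper's: the same invariant (your inner invariant is the paper's notion of a \emph{partial validation} of $\rho$), the same outer induction on nesting depth, the same treatment of the induction case (2f) by applying the depth-$\leq k$ statement to the sub-derivation, and the same length accounting $\sum_k O(\log k)=O(n\log n)$. One structural point in your write-up does not survive contact with the deduction system of Definition \ref{definitionproof}, however. You propose to concatenate proofs of $L_0,\ldots,L_n$ and have the proof of $L_{k+1}$ \emph{invoke} $L_k$ at a cost of only $O(\log k)$ symbols. But each $L_k$ is an implication in the sense of \eqref{implication} --- it universally quantifies over all $\phi$, $\psi$ and all variable assignments --- and \GNT\ has no unbounded universal quantifier, so $L_k$ is not a formula of the language, cannot be the conclusion of a proof step, and no inference rule lets a later step cite it by name. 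The only way to use the depth-$k$ statement inside the proof of $L_{k+1}$ is to splice its entire proof in at the single place it is needed (the new (2f) case), where its global hypotheses $\TT(\mu(m))$ and $\mu(m)\Rightarrow_{A,k}\mu(m+1)$ are already available as prior steps; this is exactly what the paper does, and it replaces your sum by the recurrence $|P_{k+1}|\leq|P_k|+O(\log k)$, which still yields $O(n\log n)$, so the bound is unharmed. A second, smaller point: your invariant asserts truth under \emph{every} assignment consistent with the intermediate hypotheses, which is correct for `Fix' steps but not for `Find' (where only an existentially chosen witness is propagated) or `Case' (where the assignments are split disjunctively between the two branches); the paper's partial-validation conditions encode these asymmetries explicitly, and your verification will need them as well.
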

\begin{proof}
We begin with a definition. Let $\rho$ be a proof, and let $\lv\xx=\aa\rv$ be a variable assignment. A \emph{partial validation of $\rho$ of length $k$ with respect to $\lv\xx=\aa\rv$} is a finite list of pairings between steps and variable assignments such that for each step $S_j$, if $V_j$ denotes the set of variable assignments paired with $S_j$, then
\begin{itemize}
\item $V_{-1} = \{\lv\xx=\aa\rv\}$;
\item if $\lv\yy=\bb\rv \in V_j$, and $S_j$ is not of type (3) (cf. Definition \ref{definitionproof}), then the conclusion of $S_j$ is true relative to the assignment $\lv\yy=\bb\rv$;
\end{itemize}
and if furthermore $j\leq k$ and $S_j$ does not depend on any steps of type (6), then the following conditions hold where $S_i$ is the last step that $S_j$ depends on, unless $S_j$ does not depend on any other steps in which case we let $i=-1$:\Footnote{Note that by condition (II) of Definition \ref{definitionproof}, $S_j$ directly depends on $S_i$.}
\begin{itemize}
\item if $S_j$ is of type (1) or (2), then $V_j = V_i$;
\item if $S_j$ is of the form `Fix $z<t(\yy)$', then for each $\lv\yy=\bb\rv\in V_i$ and for each $c<\RR(t(\yy) \;|\; \yy=\bb)$, we have $\lv\yy=\bb,z=c\rv \in V_j$;
\item if $S_j$ is of the form `Find $z$', then  for each $\lv\yy=\bb\rv\in V_{j-1}$ there exists $c$ such that $\lv\yy=\bb,z=c\rv \in V_j$;
\item if $S_j$ is of the form `Case 1: $\phi_1$', then $V_j\cup V_{j'} = V_i$, where $j'$ is the corresponding step of the form `Case 2: $\phi_2$'.
\end{itemize}
We now proceed with the proof by induction on $n$. We will show that \eqref{provabletruen} can be proven in $\leq C f(n)$ symbols, where
\[
f(n) = (n+2) \log(n+3)
\]
and $C$ is a large constant. Note that when $n=-1$, \eqref{provabletruen} is trivially provable because it is vacuously true. Thus, fix $n\geq -1$ and suppose that \eqref{provabletruen} can be proven in $A$ using $\leq C f(n)$ symbols. Now $A$ can reason as follows:

``Let $\phi,\psi,\lv\xx=\aa\rv$ satisfy $\TT(\phi(\xx) \;|\; \xx=\aa)$ and $\phi \Rightarrow_{A,n+1} \psi$. Then there is a proof $\rho$ in $A$ of $\phi\Rightarrow\psi$ using $\leq n+1$ nested inductions. We will prove by induction that for all $k\leq\ell$, there exists a partial validation of $\rho$ of length $k$ for the variable assignment $\lv\xx=\aa\rv$.

``Fix $k < \ell$ and suppose that there exists a partial validation of $\rho$ of length $k$. Let $j=k+1$. We need to show that the partial validation can be extended to a partial validation of length $j$. If $S_j$ is not of type (2f), then the proof is tedious and left as an exercise to the reader.

``If $S_j$ is of type (2f), then fix a variable assignment $\lv\yy=\bb\rv\in V_j = V_i$. We need to show that the conclusion of $S_j$, which we denote by $\mu(\yy,t(\yy))$, is true relative to $\lv\yy=\bb\rv$. For ease of notation we assume that $\lv\yy=\bb\rv = \lv\rv$ is the trivial assignment.

``Fix $m$ and suppose that $\phi' = \mu(m)$ is true. By the structure of $\rho$, the inference $\mu(m) \;\Rightarrow\;\mu(m+1)$ is provable in $A$ using $\leq n$ nested inductions. So [insert proof of \eqref{provabletruen} using less than $\leq C f(n)$ symbols], and thus $\psi' = \mu(m+1)$ is true. Thus by induction, $\mu(\RR(t))$ is true, and thus $\mu(t)$ is true.

``Thus there exists a partial validation of $\rho$ of length $j=k+1$. So by induction, there exists a partial validation of $\rho$ of length $\ell$. Since by assumption $S_\ell$ depends only on global hypotheses and inferences, we have $V_\ell = V_{-1} = \{\lv\xx=\aa\rv\}$ and thus the conclusion of $S_\ell$ is true relative to $\lv\xx=\aa\rv$.''

The length of this proof is $\leq C f(n) + O(\log(n))$,\Footnote{Note that the length $O(\log(n))$ is required not only to keep track of the numeral $n$ appearing in the proof, but also to distinguish between the different variables that appear at different levels of nesting.} so if $C$ is chosen appropriately then the length is $\leq C f(n+1)$. This completes the inductive step.
\end{proof}

\begin{theorem}[Cf. Theorem \ref{theoremrecstrat2}]
\label{theoremrecstrat2B}
Let $\lang$ be a classical list-compatible proposition/term language, and let $A$ be a standard axiomatization of $\lang$. Then for each $n$, the \statement\ `every $\Sigma_1$ \statement\ provable in $A$ using $\leq n$ nested quantifiers is true' (where by `is true', we mean `has a verification' as per Definition \ref{definitionQFF}) can be proven in $A$ using $O(n^2\log(n))$ symbols, or $O(n\log(n))$ if we allow shorthand.
\end{theorem}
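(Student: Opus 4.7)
The plan is to adapt the inductive argument of Theorem~\ref{theoremrecstrat2C} to the classical setting, replacing ``nested inductions'' by ``nested quantifiers'' and the hypothesis $\TT(\phi\,|\,\xx=\aa)$ by the weaker requirement that the target formula be $\Sigma_1$ with a verification in the sense of Definition~\ref{definitionQFF}. Concretely, I will prove by induction on $n$ the auxiliary claim $\Psi(n)$ that every proof $\rho$ in $A$ of a $\Sigma_1$ statement $\Phi$, all of whose intermediate formulas have quantifier depth at most $n$, yields a verification of $\Phi$. The base case $n=0$ is immediate: a quantifier-free proof of $\exists x\;\phi(x)$ must justify the conclusion via a step of type (2a)--(2b) applied to a ground term $t$ witnessing the existential, and the verification is read off from the assignment of values to subterms of $\phi(t)$.

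For the inductive step I will introduce a \emph{Skolemized partial validation} of $\rho$: a map assigning to each of the first $k$ steps $S_j$ a finite set $V_j$ of pairs $(\lv\yy=\bb\rv,\,w_j)$, where $\lv\yy=\bb\rv$ is a variable assignment for the variables introduced so far and $w_j$ is a concrete verification of the conclusion of $S_j$ under that assignment whenever that conclusion is $\Sigma_1$. The compatibility conditions for steps of types (1)--(6) mirror those of Theorem~\ref{theoremrecstrat2C}. The new critical case is generalization: a ``Fix $y$'' step followed by an inference of $\forall y\;\mu(y)$, where $\mu$ has quantifier depth $\leq n$. By the inductive hypothesis applied to the sub-proof of $\mu(y)$ under each assignment of $y$, we obtain the needed verifications; dually, ``Find $y$'' yields a Skolem witness for an existential, and the classical rules from Definition~\ref{definitionstandardaxiom}(2) are handled by essentially the same case analysis. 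A standard prenex-normal-form preprocessing reduces arbitrary quantifier-depth-$n$ consequences to the $\Sigma_1$ statement of interest.

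The length accounting proceeds exactly as in Theorem~\ref{theoremrecstrat2C}: the level-$(n+1)$ proof literally inserts the level-$n$ proof as a sub-lemma, together with the new case analysis. Without shorthand the bookkeeping at each level contributes $O(n\log n)$ symbols, because references to quantifier-depth-$n$ formulas cost $O(n)$ per occurrence and numeric parameters cost $O(\log n)$. This gives the recurrence $|P_{n+1}| \leq |P_n| + O(n\log n)$ and hence $|P_n| = O(n^2\log n)$. Allowing shorthand via Definition~\ref{definitionshorthand}, I abbreviate the partial-validation relation, the case-by-case extension lemma, and the verification map as a small fixed collection of defined symbols, so the per-level overhead drops to $O(\log n)$, yielding $|P_n| = O(n\log n)$ and matching the bound in Theorem~\ref{theoremrecstrat2C}.

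The main obstacle I expect is the interaction between ``Find $x$'' steps and nested universal quantifiers in the inductive step: a proof that ultimately concludes a $\Sigma_1$ statement may pass through $\Pi_n$ lemmas that require skolemization at an intermediate stage, and the IH must be applied to the correct sub-proof under the correct assignment. To handle this cleanly, I anticipate strengthening $\Psi(n)$ to ``every proof of a $\Sigma_1$ consequence of a finite set of $\Sigma_n$ hypotheses yields a verification parametrized by Skolem witnesses for those hypotheses''. This is the classical analogue of the relation between inferences and partial validations used in Theorem~\ref{theoremrecstrat2C}, and once the strengthening is in place the inductive step reduces to the case analysis described above, at which point the length estimate is routine.
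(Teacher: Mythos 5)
Your route is genuinely different from the paper's, and as it stands it has a gap that, once repaired, forces you back to the paper's construction. The paper does not induct on quantifier depth at the level of the soundness argument. It first builds, by a tower of $n$ separate (non-recursive) shorthand definitions, a stratified partial truth predicate --- ``$i$-true'' for prenex formulas with $i\leq n$ quantifier blocks --- proves the Tarskian compositional clauses for it (the statement $(*)$) in $O(n\log(n))$ symbols, extends $n$-truth to arbitrary formulas via Russell reformulations and prenexing, checks that this realizes all of the weak metasystem $(A+1)^*$ for formulas of length $\leq n/C$, and then invokes, once, the relativization of the soundness argument of Theorem \ref{theoremprovabletrue} (a single internal induction over the steps of the proof being validated, costing $O(1)$ further symbols). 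The only objects whose size scales with $n$ are the truth definitions and the clauses relating them, which is where $O(n\log(n))$ with shorthand and $O(n^2\log(n))$ without (each occurrence of ``$i$-true'' expanded in full) actually come from.

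The gap in your plan is that the invariant $\Psi(n)$ cannot be stated, even in the strengthened Skolemized form. A proof of a $\Sigma_1$ conclusion may derive a $\Pi_2$ (or deeper) lemma by generalization and then instantiate it at unboundedly many values introduced by later `Fix' steps; your partial validation must therefore record that such a lemma is \emph{true}, and in a classical language the only way to say this for a depth-$n$ formula is via a depth-$n$ truth definition --- exactly Tarski's obstruction, and exactly what the paper's $i$-truth hierarchy supplies. Skolem witnesses do not rescue this: a Skolem witness for a $\Pi_2$ formula is a function on the domain of discourse, a higher-type object you cannot quantify over in a list-compatible first-order language, and a finite approximation is useless because you cannot bound in advance which instances later steps will need. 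Relatedly, your outer induction does not align with the proof structure the way the induction of Theorem \ref{theoremrecstrat2C} does: there the sub-proof inside an induction step genuinely has one fewer nested induction, so the inductive hypothesis applies to it, whereas here the sub-derivation above a generalization step lives inside a proof whose other formulas may still have depth $n+1$, so $\Psi(n)$ does not apply to it. Finally, Definition \ref{definitionshorthand} forbids recursive shorthand, so the depth-parametrized notions cannot be packaged into a small fixed collection of defined symbols; you need $n$ separate definitions of total length $O(n\log(n))$. Once you introduce the stratified truth predicates needed to state your invariant, the outer induction on $n$ becomes superfluous and you have reconstructed the paper's proof.
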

\begin{proof}
First we consider the case where shorthand is allowed. For each $i\in\N$, we will call a formula $\Phi$ an \emph{$i$-formula} if it is of the form
\begin{equation}
\label{prefix}
\Phi(\xx) = \LQ\forall y_i \;\exists z_i \; \cdots \; \forall y_1 \;\exists z_1 \;\; \phi(\xx,\yy,\zz)\RQ,
\end{equation}
where $\phi$ is a quantifier-free formula. (The reason for the decreasing indices will be apparent later.) In particular, a formula is a $0$-formula if and only if it is quantifier free.

The following description of shorthand can be translated into $\lang$: ``A $0$-formula is said to be \emph{$0$-true} relative to a variable assignment $\lv\xx=\aa\rv$ if it has a verification (cf. Definition \ref{definitionQFF}). A $1$-formula $\Phi = \LQ\forall y_1 \;\exists z_1 \; \phi(\xx,y_1,z_1)\RQ$ is said to be \emph{$1$-true} relative to a variable assignment $\lv\xx=\aa\rv$ if $\phi$ is a $0$-formula such that for all $b$, there exists $c$ such that $\phi$ is true relative to the variable assignment $\lv\xx=\aa,y_1 = b,z_1 = c\rv$. [Repeat the previous sentence $n$ times, each time incrementing all numerical values by 1.]'' We remark that an $i$-formula is $i$-true if and only if it is true, though this fact will not be needed for the proof.

To compute the length of the description of shorthand in the previous paragraph, we observe that it consists of $n$ definitions of shorthand, each of which  must refer to some natural number between $0$ and $n$ (e.g. via its binary expansion). Thus each definition is of length $O(\log(n))$, and the length of the sequence of definitions is $O(n\log(n))$.

\begin{remark*}
The shorthand description ``For all $i < n$, an $(i+1)$-formula $\Phi$ is said to be \emph{$(i+1)$-true} if [etc]'' cannot be translated into $\lang$, because our shorthand conventions (cf. Definition \ref{definitionshorthand}) do not allow recursive definitions. Allowing shorthand to include recursive definitions changes the situation, and we do not analyze the modified version here.
\end{remark*}

Now for each $i,j \leq n$ such that $i \leq j$, we define the \emph{$j$-extension} of an $i$-formula $\Phi$ to be the formula `$\forall y_j \;\exists z_j \; \cdots \; \forall y_{i+1} \;\exists z_{i+1} \;\Phi(\xx)$'. Note that since the notion of a $j$-extension is purely syntactic, it can be defined in $\lang$ simultaneously for all $j$. An $i$-formula $\Phi$ is said to be \emph{$j$-true} relative to a variable assignment $\lv\xx=\aa\rv$ if the $j$-extension of $\Phi$ is $j$-true relative to $\lv\xx=\aa\rv$.

Now for each $j < n$, the $\lang$-formula $\Xi(j)=\;$`For every $i\leq j$, for every $i$-formula $\Phi$, and for every variable assignment $\lv\xx=\aa\rv$, $\Phi$ is $j$-true relative to $\lv\xx=\aa\rv$ if and only if $\Phi$ is $(j+1)$-true relative to $\lv\xx=\aa\rv$' can be proven in $A$ using $O(\log(n))$ symbols. Now let $\Theta(j)$ denote the $\lang$-formula `For every $i \leq j$, for every $i$-formula $\Phi$, and for every variable assignment $\lv\xx=\aa\rv$, $\Phi$ is $j$-true relative to $\lv\xx=\aa\rv$ if and only if $\Phi$ is $n$-true relative to $\lv\xx=\aa\rv$'. Note that both $\Xi(j)$ and $\Theta(j)$ are \statements\ rather than merely implications due to our assumption that the language $\lang$ is classical.  Now for all $j<n$, the implication $\Theta(j+1) \wedge \Xi(j)\Rightarrow\Theta(j)$ can be proven in $A$ using $O(\log(n))$ symbols. Since $\Theta(n)$ is trivially provable, it follows that $\Theta(1),\ldots,\Theta(n)$ can be simultaneously proven in $A$ using $O(n\log(n))$ symbols.

A division into cases of the form $i=i_0$ shows that $A$ can prove the following \statement\ using $O(n\log(n))$ symbols:
\begin{itemize}
\item[$(*)$] For every $i < n$, for every $(i+1)$-formula $\Phi = \LQ \forall y_{i+1} \;\exists z_{i+1}\; \phi(\xx,y_{i+1},z_{i+1})$', and for every variable assignment $\lv\xx=\aa\rv$, $\Phi$ is $n$-true relative to $\lv\xx=\aa\rv$ if and only if for all $b$, there exists $c$ such that $\phi$ is $n$-true relative to the variable assignment $\lv\xx=\aa,y_{i+1} = b,z_{i + 1} = c\rv$.
\end{itemize}
This completes the part of the proof where we need to keep track of the length of the proofs in $A$. In the remainder of the proof, we will simply use $(*)$ as a black box and use reasoning that can entirely be formalized in $A$ (using $O(1)$ symbols). The following lemma shows that the conjunction operator `and' interacts nicely with the notion of $n$-truth:

\begin{lemma}
Fix $i,j\leq n$ with $i + j \leq n$. Let $\Phi$ be an $i$-formula, and let $\Psi$ be a $j$-formula. Then there exists an $(i+j)$-formula $\Theta$ (computed explicitly from $\Phi$ and $\Psi$) such that for any variable assignment $\lv\xx=\aa\rv$, $\Theta$ is $n$-true relative to $\lv\xx=\aa\rv$ if and only if $\Phi$ and $\Psi$ are both $n$-true relative to $\lv\xx=\aa\rv$.
\end{lemma}
\begin{proof}
If
\begin{align*}
\Phi(\xx) &= \LQ\forall y_i \;\exists z_i \; \cdots \; \forall y_1 \;\exists z_1 \; \phi(\xx,\yy,\zz)\RQ\\
\Psi(\xx) &= \LQ\forall y_j \;\exists z_j \; \cdots \; \forall y_1 \;\exists z_1 \; \psi(\xx,\yy,\zz)\RQ
\end{align*}
then we let
\[
\Theta(\xx) = \LQ\forall y_{i+j} \; \exists z_{i+j}\; \cdots \; \forall y_1 \; \exists z_1 \; \phi(\xx,\yy_1^i,\zz_1^i) \wedge \psi(\xx,\yy_{i+1}^{i+j},\zz_{i+1}^{i+j})\RQ.
\]
To prove that $\Theta$ has the desired property, we apply induction on $j$ and use $(*)$.
\end{proof}


Similar lemmas can be proved for the `or' operator and for both types of quantifiers. These lemmas can be used to define the \emph{prefix version} of any formula that can be written using `and', `or', quantifiers, and quantifier-free formulas, i.e. any formula that does not contain quantifier-like operators. Since the Russell reformulation of a formula does not contain quantifier-like operators, this motivates the following definitions:
\begin{itemize}
\item a formula  $\phi$ is $n$-true relative to a variable assignment $\lv\xx=\aa\rv$ if the prefix version of the Russell reformulation $\lb \phi\rb$ is $n$-true relative to $\lv\xx=\aa\rv$;
\item the $n$-referent of a term $t$ relative to a variable assignment $\lv\xx=\aa\rv$ is the unique object $b$ (if one exists) such that the prefix version of the formula $(t\to y)$ is $n$-true relative to the assignment $\lv\xx=\aa,y=b\rv$.
\end{itemize}
By Lemma \ref{lemmarusselllength}, there exists a constant $C$ such that if $|\phi|\leq n/C$ (resp. $|t|\leq n/C$), then the prefix version of $\lb \phi\rb$ (resp. $(t\to y)$) is an $i$-formula for some $i < n$. Thus, these definitions provide a way for $\lang$ to partially reproduce the metalanguage $\lang+1$, subject to the restriction that the notions of truth and reference are only used for formulas and terms of length $\leq n/C$. It is routine to check that all axiom schemata and inference rules of the metasystem $A+1$ except for \eqref{provabletrue} are theorems of $A+\text{(*)}$.\Footnote{In order to check the forward direction of the equivalence on the third line, one first needs to prove by induction on $|t|$ that there can be at most one $b$ such that the prefix version of $(t\to y)$ is $n$-true relative to the variable assignment $\lv\xx=\aa,y=b\rv$.} Thus since the proof of Theorem \ref{theoremprovabletrue} can be formalized in $(A+1)^*$, its relativized version can be proven in $A+\text{(*)}$.\Footnote{Note that there is something to check here, namely that the proof of Theorem \ref{theoremprovabletrue} does not rely on the notions of truth and reference for formulas and terms that are significantly longer than the proof whose conclusion we wish to show is true.} But this relativized version says that every \statement\ provable in $A$ using less than $n/C$ symbols is $n$-true, and it is easy to check that for $\Sigma_1$ \statements, $n$-truth coincides with the notion of truth given by Definition \ref{definitionQFF}. This completes the proof.

If we are not allowed to use shorthand, then each time the notion of $i$-truth appears in the proof it must be expanded in full, which costs $O(n\log(n))$ symbols instead of $O(\log(n))$. So the final length would be $O(n^2\log(n))$ rather than $O(n\log(n))$.
\end{proof}

\begin{theorem}[In $(A+1)^*$]
\label{theoremprovabletrue}
Let $A$ be a valid axiomatization of a classical list-compatible proposition/term language $\lang$. Then for each $n$, the \statement\ \eqref{provabletruen} is true, where $\phi\Rightarrow_{A,n}\psi$ means `the implication $\phi\Rightarrow \psi$ can be proven using $\leq n$ nested steps'.
\end{theorem}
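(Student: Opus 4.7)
The plan is to proceed by induction on $n$, closely paralleling the argument in Theorem \ref{theoremrecstrat2C} but now working inside the weak metasystem $(A+1)^*$, where the primitives $\TT$ and $\RR$ together with their defining axioms let us manipulate the semantic notions of truth and reference directly, without having to build up an ``$n$-truth'' machinery via prefix normal forms. For the base case the claim is either vacuous or follows immediately from the $\TT$-axioms of $(A+1)^*$. For the inductive step, suppose the $n$th instance of \eqref{provabletruen} is a theorem of $(A+1)^*$, and let $\rho$ be a proof in $A$ of $\phi\Rightarrow\psi$ using at most $n+1$ nested steps.

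Following the template of Theorem \ref{theoremrecstrat2C}, I would introduce a notion of \emph{partial validation} of $\rho$ relative to a variable assignment $\lv\xx=\aa\rv$: for each step $S_j$ a set $V_j$ of variable assignments extending $\lv\xx=\aa\rv$ which transforms correctly along the dependence relation according to the type of step (Fix/Find/Case/etc.), and such that the conclusion of $S_j$ is $\TT$-true under every assignment in $V_j$. The core step of the proof is then to show, by an inner induction inside $(A+1)^*$ on $k$, that a partial validation of length $k$ exists for all $k\leq\ell$, where $\ell$ is the length of $\rho$. Taking $k=\ell$ yields $V_\ell=\{\lv\xx=\aa\rv\}$ (since $S_\ell$ depends only on global hypotheses and inferences), and hence $\TT(\psi|\xx=\aa)$, which is what we want. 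The extension from $k$ to $k+1$ proceeds by case analysis on the type of $S_{k+1}$. For steps of types (1)-(5) and subcases (2a)-(2e) of inferences, the $\TT$-truth of the new conclusion follows routinely from the $\TT$-axioms of $(A+1)^*$ for conjunction, disjunction, bounded and unbounded quantifiers, atomic formulas, and equality, combined with the fact that every axiom schema and inference rule of $A$ is, by standardness and validity, translated via $\TT$ into a theorem of $(A+1)^*$.

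The delicate subcase is (2f), the induction step inside $\rho$. Here the subproof of $\mu(m)\Rightarrow\mu(m+1)$ has depth at most $n$, so the \emph{outer} inductive hypothesis applies and furnishes, as a theorem of $(A+1)^*$, the implication $\TT(\mu(m)|\cdots)\Rightarrow\TT(\mu(m+1)|\cdots)$; ordinary induction within $(A+1)^*$ then delivers $\TT(\mu(\RR(t))|\cdots)$, and hence $\TT(\mu(t)|\cdots)$ via the $\RR$-axioms.

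The main obstacle will be carrying the argument out entirely inside $(A+1)^*$ in spite of the absence of rule \eqref{provabletrue}. The crucial observation is that for the fixed proof $\rho$ we never need to promote a generic ``provability-implies-truth'' assertion: for each concrete step we invoke only a specific $\TT$-translated axiom or inference rule of $A$, and for the (2f) subcase we invoke the outer inductive hypothesis on $n$, which was itself established in $(A+1)^*$. The remaining bookkeeping -- tracking the dependence relation, producing existential witnesses for `Find' steps, and pairing off `Case' steps -- is routine and follows exactly the pattern of Theorem \ref{theoremrecstrat2C}, and I would leave it as an exercise in any written-up proof.
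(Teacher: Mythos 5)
Your outer structure---induction on $n$, an inner induction over the steps of a fixed proof $\rho$ carried out inside $(A+1)^*$, and the observation that rule \eqref{provabletrue} is never needed because each concrete step only invokes a specific $\TT$-translated axiom or the outer inductive hypothesis---matches the paper. But the inner induction you propose is not the one the paper uses, and the difference matters. The paper does \emph{not} import the partial-validation machinery of Theorem \ref{theoremrecstrat2C} here. It instead inducts over the inferences of $\rho$ that do not depend on any intermediate hypotheses, and for \emph{every} inference that discharges an intermediate hypothesis---(2c), (2d), (2e), not just (2f)---it applies the outer inductive hypothesis to the subproof sitting under that hypothesis, which by definition uses at most $n$ nested steps. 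Indeed the one case the paper works out in detail is (2c): to see that $\forall y\;\mu(y)$ is true relative to $\lv\xx=\aa\rv$, it notes that $\nu_1,\ldots,\nu_r\Rightarrow\mu(y)$ is provable with $\leq n$ nested steps and applies the outer IH separately for each value $b$ of $y$.

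The gap in your version is precisely your claim that (2a)--(2e) ``follow routinely from the $\TT$-axioms'' via partial-validation bookkeeping. A partial validation in Theorem \ref{theoremrecstrat2C} is a \emph{finite} list of pairings between steps and variable assignments; that is adequate there because $\lang=\GNT$ has only bounded universal quantifiers, so a `Fix $z<t$' step generates finitely many extended assignments. In the present theorem $\lang$ is classical, so a step `Fix $x$' with an unbounded quantifier would force $V_j$ to contain an extension of each assignment in $V_i$ by \emph{every} object of the domain, and the finite-list definition, together with the inner induction on the length $k$, no longer makes sense as stated. You would either have to replace the $V_j$ by definable classes of assignments and re-verify that the inner induction survives in $(A+1)^*$, or---more simply---adopt the paper's inner induction, which never forms these sets because the quantifier cases are absorbed into applications of the outer inductive hypothesis. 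Your treatment of (2f), of the base case, and of the formalizability in $(A+1)^*$ is otherwise correct.
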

Note that \eqref{provabletruen} is a \statement\ rather than an implication because of the assumption that $\lang$ is classical.
\begin{proof}
Clearly, \eqref{provabletruen} is true for $n=-1$. Suppose that \eqref{provabletruen} is true, and fix $\phi,\psi,\lv\xx=\aa\rv$ such that $\phi(\xx)$ is true relative to $\lb\xx=\aa\rv$ and $\phi \Rightarrow_{A,n+1}\psi$. Let $\rho$ be the proof of $\phi\Rightarrow\psi$. We will prove by induction that if $S_j$ is an inference that does not depend on any intermediate hypotheses, then the conclusion of $S_j$ is true relative to $\lv\xx=\aa\rv$. Indeed, suppose that $S_j$ is of type (2c), with conclusion $\forall y \; \mu(y)$. Let $\nu_1,\ldots,\nu_r$ be the conclusions of the steps that $S_j$ depends on. Then the implication $\nu_1,\ldots,\nu_r\Rightarrow \mu(y)$ is provable in $A$ using $\leq n$ nested steps. Since $\nu_1,\ldots,\nu_r$ are all true relative to $\lv\xx=\aa\rv$, it follows that for all $b$, $\mu(y)$ is true relative to $\lv\xx=\aa,y=b\rv$ and thus by the induction hypothesis $\forall y \; \mu(y)$ is true. The other cases are similar. This completes the induction with respect to $S_j$. Letting $S_j$ be the last step of $\rho$, we see that $\psi$ is true relative to $\lv\xx=\aa\rv$, which completes the induction with respect to $n$.
\end{proof}

\ignore{
Old proof:

It turns out to be easiest to prove (ii) first and then prove (i) as a variant.
\begin{proof}[Proof of \text{(ii)}]
\begin{lemma}
\label{lemmaAinference}
Let $A$ be a standard axiomatization of a selfmeta language $\lang$. Then the implication `Every formula of $\lang$ that can be inferred from true formulas using a single application of an inference rule of $A$ is true' is provable in $A$.
\end{lemma}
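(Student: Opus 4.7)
The plan is to formalize the informal statement as a finite conjunction indexed by the inference-rule schemas of Definition \ref{definitionstandardaxiom} together with the quantifier rules of (2): for each schema $\Psi_1,\ldots,\Psi_k \Rightarrow \Psi_0$, we prove in $A$ the implication
\[
\forall \text{instantiations} \;\; \TT(\Psi_1) \wedge \cdots \wedge \TT(\Psi_k) \;\Rightarrow\; \TT(\Psi_0),
\]
relativized as in Definition \ref{definitionmetasystem} to an arbitrary variable assignment $\lv \xx = \aa\rv$. Since the list of schemas is finite, it suffices to handle them one at a time; the full lemma then follows by conjoining (or, equivalently, by a proof that does case analysis on the name of the rule encoded as a numeral).

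Before the case analysis I would prove a \emph{substitution lemma}: for every formula $\phi$ with a free variable $x$, every term $t$, and every variable assignment $\lv \xx = \aa\rv$,
\[
\TT\big(\phi(t) \;\big|\; \xx = \aa\big) \;\Leftrightarrow\; \TT\big(\phi(x) \;\big|\; \xx = \aa,\, x = \RR(t \;|\; \xx = \aa)\big),
\]
together with the companion statement for terms, $\RR(s(t) \,|\, \xx=\aa) = \RR(s(x) \,|\, \xx=\aa, x=\RR(t\,|\,\xx=\aa))$. Both are proved by induction on the length (really, the build-up) of $\phi$ and $s$ using only the compositional axioms of the metasystem $A+1$ that are, by clause (6) of Definition \ref{definitionstandardaxiom}, already part of $A$. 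This lemma is what makes the $\forall$-instantiation and $\exists$-introduction rules go through.

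With the substitution lemma in hand, each case is routine unfolding. For the purely propositional schemas such as $\phi,\psi \Rightarrow \phi\wedge\psi$ or $\phi\wedge\neg\phi \Rightarrow \psi$, the compositional axioms for $\wedge$, $\vee$, and negated atomic formulas reduce the claim to the corresponding logical axiom of $A$ applied to the internal truth-values. For the existence and equality rules the relevant tools are the function-compositional axiom $\RR(f(t_1,\ldots,t_n)) = f(\RR(t_1),\ldots,\RR(t_n))$, the relation axiom $\TT(R(t_1,\ldots,t_n)) \Leftrightarrow R(\RR(t_1),\ldots,\RR(t_n))$, and the axiom $R(t_1,\ldots,t_n) \Rightarrow t_i\ex$. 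For the CPQ rules \eqref{CPQ} I would unfold $\DD$ via \eqref{definite} and use the $\forall$-compositional axiom together with the CPQ rule \emph{at the meta level}, which is again part of $A$ by (2). The quantifier rules reduce, via substitution, to the $\exists$- and $\forall$-compositional axioms of Definition \ref{definitionmetasystem}.

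The main obstacle is the substitution lemma, both to state precisely (one needs to track variable assignments carefully, particularly avoiding capture when $t$ contains variables of $\xx$) and to prove: the induction is on the syntactic build-up of $\phi$, but $A$ only sees this via the encoding of strings, so one must phrase the induction as an induction on the length of the string $\phi$ and use the fact that the set of well-formed formulas is primitive recursive. Once substitution is in place, the remaining case analysis is short and mechanical, and yields a single proof in $A$ of length depending only on the number of schemas in Definition \ref{definitionstandardaxiom}, not on any external parameter.
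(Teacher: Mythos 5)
Your proposal is correct and follows essentially the same route as the paper's proof: reduce to the finitely many rule schemas, show that the truth predicate commutes with schema instantiation (the paper asserts the equivalence $\TT(\Psi_i(\tt,\bfphi)) \Leftrightarrow \Psi_i(\wbar\tt,\wbar\bfphi)$ as a consequence of $A$ being standard, where your substitution lemma is exactly the inductive content behind that assertion), and then apply each rule internally to the unfolded truth-values. You merely make explicit a step the paper leaves implicit, which is a reasonable amount of extra care rather than a different argument.
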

\begin{proof}
Since $A$ has only finitely many inference rules, it suffices to show that if $R$ is an inference rule of $A$, then `Every formula of $\lang$ that can be inferred from true formulas using a single application of rule $R$ is true' is a theorem of $A$. So let $R$ be an inference rule of $A$. Then $R$ is encoded as a schema $\Psi_1(\tt,\bfphi),\ldots,\Psi_k(\tt,\bfphi) \Rightarrow \Psi_0(\tt,\bfphi)$, where $\tt$ and $\bfphi$ are lists of placeholder terms and formulas. Let $\wbar\tt$ (resp. $\wbar\bfphi$) denote the result of replacing each term $t_i$ in $\tt$ by the term `the referent of $t_i$'
 (resp. of replacing each formula $\phi_j$ in $\bfphi$ by the formula `$\phi_j$ is true'). Since $A$ is standard, for each $i = 0,\ldots,k$ the equivalence $\TT(\Psi_i(\tt,\bfphi)) \Leftrightarrow \Psi_i(\wbar\tt,\wbar\bfphi)$ is a theorem of $A$.

Thus the following reasoning can be translated into $A$: ``Suppose that $\phi$ is a formula of $\lang$ that can be inferred from true formulas using a single application of rule $R$. By definition, this means that there exist $\tt,\bfphi$ such that the formulas $\Psi_1(\tt,\bfphi),\ldots,\Psi_k(\tt,\bfphi)$ are all true, and in addition $\phi = \Psi_0(\tt,\bfphi)$. Thus [insert $\Psi_1(\wbar\tt,\wbar\bfphi),\ldots,\Psi_k(\wbar\tt,\wbar\bfphi)$ here], so by rule $R$, [insert $\Psi_0(\wbar\tt,\wbar\bfphi)$ here]. Thus $\phi = \Psi_0(\tt,\bfphi)$ is true.'' Here, the editorial mark `[insert $\psi$ here]' is a note for the person tasked with translating this reasoning into $\lang$, indicating that they should insert the formula $\psi$ (which is a well-formed formula of $\lang$, but not a legitimate English phrase) rather than translating any English phrase. We are forced to include this awkward notation in the English version of the reasoning because the formulas $\Psi_i(\wbar\tt,\wbar\bfphi)$ cannot be straightforwardly translated into English without more information about their content. (The nearest substitute, `$\Psi_i(\wbar\tt,\wbar\bfphi)$ is true', would be translated back into $\lang$ as `$\TT(\Psi_i(\wbar\tt,\wbar\bfphi))$', which is a different $\lang$-formula from $\Psi_i(\wbar\tt,\wbar\bfphi)$. However, in the sequel we will be less worried about this distinction.)
\end{proof}

Now for each $n$, let $\Phi(n)$ denote the \statement\ `Every sentence of $\lang$ provable in $A$ using at most $n$ symbols is true'. Note that here we are treating $\Phi(n)$ as a \statement\ rather than as an implication; this is possible because we can write $\Phi(n) = \LQ \forall |\phi| \leq n \;\; P(\phi,A,n) \Rightarrow \TT(\phi)$', where $P(\phi,A,n)$ is short for ``$\phi$ is provable in $A$ using at most $n$ symbols'', and $|\phi|$ denotes the length of $\phi$. The quantifier is bounded, and the $\Rightarrow$ symbol can be treated as a material implication.

We claim that for each $n$, the implication $\Phi(n) \Rightarrow \Phi(n+1)$ is a theorem of $A$ which can be proven using $O(\log(n))$ symbols. Indeed, the following reasoning can be formalized in $A$: ``Suppose that $\Phi(n)$ is true. Let $\phi$ be a sentence of $\lang$ provable in at most $n+1$ symbols. Then $\phi$ can be inferred from true sentences using a single application of an inference rule of $A$. Thus, $\phi$ is true. Since $\phi$ was arbitrary, $\Phi(n+1)$ is true.'' In the second-to-last sentence, we have used Lemma \ref{lemmaAinference}. Since the preceding reasoning requires a representation of $n$, its length is $O(\log(n))$.

But now by concatenation, for each $n$, $A$ can prove the \statement\ $\Phi(n)$ using $O(n\log(n))$ symbols.
\end{proof}

\begin{remark}
The previous proof can be formalized in \GNT$_1$; the ``by concatenation'' part means that we must go up a level.
\end{remark}

\begin{proof}[More complicated and more correct proof of \text{(ii)}]
Let $\rho$ be a proof. The \emph{aggregation} of $\rho$ is the formula constructed as follows:
\begin{itemize}
\item Start with the empty string. Move the counter to the first step.
\item Wherever the counter is now, if that step is `Assume $\phi$' or `Then $\phi$', then add `$\wedge\phi$' to the current string, omitting the wedge if we currently have the empty string.
\item If that step is `Fix $x[<t]$', then let $\phi(x)$ be the aggregation of the list of steps that depend on this step. Then add `$\wedge\forall x[<t] \;\phi(x)$' to the current string, while removing the dependencies from circulation.
\item If that step is `Find $x[<t]$ such that $\phi(x)$', then let $\psi(x)$ be the aggregation of dependencies. Then add `$\wedge\exists x[<t] \;\phi(x)\wedge\psi(x)$'.
\item If that step is `Case 1: $\mu_1$', then find the corresponding step `Case 2: $\mu_2$', and if none exists then pretend it is at the end with nothing depending on it. Let $\psi_1$ (resp. $\psi_2$) be the aggregation of dependencies, and then add `$\wedge ((\mu_1\wedge\psi_1)\vee (\mu_2\wedge\psi_2))$'.
\end{itemize}
Then the inductive claim is that the aggregation of the first $n$ steps is a true \statement. [Continue\internal]

Now let $\rho$ be a proof and let $\phi$ be a step with no hypotheses. A \emph{partial validation} of $\rho$ consists of the following objects:
\begin{itemize}
\item A subset $S$ of the steps of $\rho$, with the property that every hypothesis used by a member of $S$ is an element of $S$.
\item A variable assignment with respect to which all members of $S$ are true.
\end{itemize}
Now for each $n$, let $\Phi(n,\rho,\phi)$ denote the \statement\ `Either $\phi$ is true, or there exists a partial validation of $\rho$ of cardinality at least $n$''. As before, we claim that for each $n$, the implication $\forall \rho \; \Phi(n,\rho,\phi) \Rightarrow \Phi(n+1,\rho,\phi)$ is a theorem of $A$ which can be proven using $O(\log(n))$ symbols. Indeed, the following reasoning can be formalized in $A$:

``Suppose that $\Phi(n,\rho,\phi)$ is true, i.e. either $\phi$ is true, or there exists a partial validation $(S,\LQ\xx = \aa\RQ)$ of $\rho$ of cardinality $n$. In the first case, we are done. In the second case, since $\phi$ has no hypotheses, we may choose $\phi'$ to be the first step of $\rho$ which is not a member of $S$ but whose hypotheses are all elements of $S$. If $\phi'$ is the instantiation of an axiom schema or an inference rule, then Lemma \ref{lemmaAinference} completes the proof. Otherwise, we are in case 3 or 4.

``Suppose we are in case 3, and let $\mu$ and $\nu$ be as there. Then since `$\mu$ or $\nu$' is in $S$, it is true under the current variable assignment, so either $\mu$ is true or $\nu$ is true. Without loss of generality suppose $\mu$ is true. Then we can add $\mu$ to $S$ if it is not already in $S$. If it is already in $S$, then by the minimality assumption on $\phi'$, the earlier occurrence of $\phi'$ is also in $S$. Thus we can add $\phi'$ to $S$.

``Suppose we are in case 4, and let $\mu(x)$ be as there. Then since `$\exists x \; \mu(x)$' is in $S$, it is true under the current variable assignment, so we may select $a$ such that $\mu(a)$ is true. Then $\mu(x)$ is true under the variable assignment $\{x = a\}$ (augmented with the current variable assignment). So we can add $\mu(x)$ to $S$ if it is not already in $S$. If it is already in $S$, then by the minimality assumption on $\phi'$, the earlier occurrence of $\phi'$ is also in $S$. Thus we can add $\phi'$ to $S$.''

[Cases 5 and 6 needed\internal]

As before, since the preceding reasoning requires a representation of $n$, its length is $O(\log(n))$, and thus by concatenation, for each $n$, $A$ can prove the \statement\ $\Phi(n)$ using $O(n\log(n))$ symbols.
\end{proof}

}

\ignore{
\section{A framework for math}
\label{appendixframework}

In this section we describe a proposition/term language and corresponding axiom system capable of transcribing the arguments of the previous section. This section is not strictly necessary [etc]

\begin{definition}
The language of \emph{abstract gradualist mathematics} is the proposition/term language in which
\begin{itemize}
\item there are five domains of discourse: strings/numbers, objects, categories, functions, and relations
\item quantification: is allowed over objects and tuples only within a fixed category; unbounded universal quantification is only allowed in a complete category.
\item there are several constants: the category of numbers, the arithmetic operations (thought of as members of the category of functions), `$=$', `$\leq$', `$\ex$' (thought of as members of the category of relations)
\item there are two primitive relations: membership (between objects and categories) and satisfaction (between objects and relations)
\item there are several primitive function: application (which inputs objects and functions and outputs objects), arity (relations, functions $\to \N$), 
\item in addition, there is a unary relation for categories called ``completeness''; intuitively, a category is complete if the principle of conservation of precision under unbounded quantification is satisfied.
\end{itemize}
This language is capable of describing both the syntax and semantics of proposition/term languages: a proposition/term language $\lang$ is a tuple consisting of a collection of syntax rules (thought of as a string) together with a list of categories, relations, and functions, corresponding to the variable types and primitive relations/functions for the syntax rules.
\end{definition}
}

\section{Implication and induction}
\label{appendiximplication}


{\it The meaning of implication.} A fundamental tool in mathematical and philosophical thought is \emph{hypothetical reasoning}, the mode of reasoning in which a reasoner ``pretends'' that a certain hypothesis is true and then observes what kind of conclusions his mind generates based on this ``information''. For example, a mathematician may pretend that the symbol `$n$' denotes a number and then perform manipulations under this pretense, for example concluding that the \statement\ `$n + n = 2n$' is true. The implicit claim is that knowing the results of hypothetical reasoning can be useful when encountering new situations. For example, if the mathematician decides that he will temporarily let the symbol `$n$' denote the number $10^{100}$, then the existence of his previous hypothetical reasoning gives him good reason to believe that the \statement\ `$10^{100} + 10^{100} = 2\cdot 10^{100}$' is true, and thus that $10^{100} + 10^{100} = 2\cdot 10^{100}$.

It is worth analyzing more closely the thought process of a reasoner who applies previously performed hypothetical reasoning to a current situation. The thought process must go something like ``I was reasoning well then, and if I reasoned in the same way regarding the current situation, then I would come to the same conclusion as I did when I was reasoning hypothetically. Thus, I can save myself the time and effort of re-reasoning by simply accepting that the conclusion of my previous reasoning holds in the new situation.'' We should point out that this kind of reasoning is not always valid. First of all, it is not necessarily true that if the reasoner reasoned in the same way in the new situation, he would reach the same conclusion. He might notice flaws in the reasoning that were not noticed when he was just pretending. Secondly, even if he would reason in the same way, the resulting reasoning could be invalid even though he would accept it. In this case, the reasoner is still correct that he can save himself time and effort by accepting the conclusion of the hypothetical reasoning in the current situation, but he is simply getting to nowhere faster.

The more serious difficulty is that the notion of ``reasoning well'' is not very precise. Indeed, hypothetical reasoning may be based on false premises, in which case it is hard to see what it would mean to say that it was ``reasoned well''.\Footnote{Philosophers distinguish between ``sound'' and ``valid'' reasoning; the difference is that ``sound'' reasoning is required to have true premises but ``valid'' reasoning isn't. What we are doing is contesting the preciseness of the concept of ``valid'' reasoning.} One may mean merely that it conforms to standard methods of reasoning, but we shall see the difficulties with this kind of response below.

Due in part to these difficulties, mathematicians have developed various notions of \emph{formal implication}, with the intention that after performing hypothetical reasoning, there should be some \emph{fact} that can be concluded simply from the existence of the hypothetical reasoning. If such a fact has the additional property that there are standard modes of reasoning via which a person, believing the fact and being confronted with a new situation, may conclude that the conclusion of their previous hypothetical reasoning is valid in the new situation, then perhaps these standard modes of reasoning may be found to be less problematic than the reasoning based on ``saving time and effort'' given above.

Two canonical examples are \emph{material implication} and \emph{universal quantification}. If $P$ and $Q$ are two claims, then $P$ is said to \emph{materially imply} $Q$ if either $Q$ is false or $P$ is true. Now suppose that $P$ and $Q$ both have definite truth-values, and that there is reasoning that yields $Q$ as a conclusion using $P$ as a hypothesis. If $P$ is true and $Q$ is false, then the reasoning has true premises but a false conclusion, meaning that it must be flawed. Since $P$ and $Q$ have definite truth-values, in all other cases $P$ materially implies $Q$. So either the reasoning is flawed, or $P$ materially implies $Q$. If we hold the reasoning in our mind and believe that there is no flaw in it, then we appear to be justified in concluding that $P$ materially implies $Q$, a fact which can then be used later if we discover that $P$ is true without reference to our previous hypothetical reasoning.

If the hypothetical reasoning can be formalized in an axiom system, then there is an even better way to reach the conclusion that $P$ materially implies $Q$. 
Namely, each claim $X$ that we make in our hypothetical reasoning may be replaced by the unconditional claim ``$P$ materially implies $X$''. The hypothesis, $P$, can be justified because the fact that $P$ has a definite truth-value means that it materially implies itself. Reproducing the hypothetical reasoning as ``material-conditional reasoning'' yields the conclusion that $P$ materially implies $Q$.

In contrast to material implication, which is well-known for confusing beginning students in logic, the concept of \emph{universal quantification} appears to be relatively intuitive. A universally quantified \statement\ is simply one of the form `Every member of category $C$ has property $P$'. In other words, a universally quantified \statement\ is one that talks about all members of a given category simultaneously. As we have argued above, any school of thought except for ultrafinitism eventually yields the conclusion that there are categories for which it does not make sense to talk about all members of the category simultaneously.

Let us call a category \emph{complete} if it makes sense to talk about all members of that category simultaneously. Suppose that $C$ is a complete category, and that there is reasoning yielding the conclusion that the object denoted by `$x$' has property $P$, using the hypothesis that `$x$' denotes a member of $C$. Then for every member $z$ of $C$, it seems that the symbol `$x$' \emph{could} denote the object $z$, in which case our reasoning would yield the conclusion that $z$ has property $P$. Moreover, if every member of $C$ is describable then we can strengthen the case by saying that \emph{we could have made the convention} that `$x$' denotes $z$, rather than talking about the symbol `$x$'\,'s ability to denote $z$ in the abstract. This appears to yield the conclusion that all members of $C$ have property $P$, as long as there was nothing wrong with our reasoning. If we later talk about a concrete member $z$ of $C$, then we can use this fact to conclude that $z$ has property $P$ without reference to our hypothetical reasoning.

Analogously to material implication, the case can be strengthened if the reasoning can be formalized in an axiom system. This time, every claim that we make while hypothetically reasoning is replaced by the claim that the previous claim holds for all members of the given category.

There is another important way to make the notion of ``reasoning well'' mathematically precise: one may say that one is reasoning well if one's reasoning can be formalized in some fixed axiom system. To be precise, a claim $Q$ is said to be \emph{relatively provable from $P$} with respect to an axiom system $A$ if $A$ can prove $Q$ using $P$ as a hypothesis (see Appendix \ref{appendixaxiomatizations} for a precise definition in the framework of this paper). The claim that $Q$ is relatively provable from $P$ is equivalent to the assertion that a certain algorithm halts, namely the algorithm that enumerates proofs of $A$ using $P$ as a hypothesis, and halts if it finds a proof of $Q$. Thus it is uncontroversial (ignoring ultrafinitism) that claims of relative provability are precise claims.

However, it is easy to show that no notion of relative provability can fully capture what we mean by `reasoning well'. Namely, if we think that the axiom system $A$ represents valid reasoning, then we tend to also think that the axiom system $A$ augmented with the inference rule `if $P$ is relatively provable from $Q$ in $A$, and $P$ is true, then $Q$ is true' also represents valid reasoning. But if $A$ is at least as strong as Peano arithmetic, then by G\"odel's incompleteness theorem this new axiom system must be strictly stronger than $A$.

Let us conclude with a simple argument against those who would say that the intuitive notion of ``valid'', ``logical'', or ``unproblematic'' reasoning is precise. Namely, one who holds such a position must answer the question of whether the reasoning process ``$P$ is true and $P$ logically implies $Q$, so $Q$ must be true'' is an example of logical reasoning. If the notion of ``logical'' reasoning is coherent and precise, then it is hard to see what is wrong with this reasoning process, and consequently there is no obvious reason why it should not be called ``logical''. But if we allow this type of reasoning, then we encounter a contradiction upon considering the following variation on the liar's paradoxical sentence: `This sentence logically implies that $0 = 1$'. If the sentence is true, and if logical reasoning is valid, then it appears that $0 = 1$. Moreover, it appears that the previous sentence constitutes an instance of logical reasoning, and thus the sentence logically implies that $0 = 1$. But this is what the sentence itself asserts, a contradiction. It is hard to see what the flaw in this argument could be if it is not the assumption that the notion of logical reasoning is precise.\\

{\it Induction.} The ambiguity inherent in the intuitive notion of ``implication'' becomes especially relevant when considering the mathematical principle of \emph{induction}, which we formulate as follows. Note that we are careful to avoid universally quantifying over the category of numbers, since we do not assume that this category is complete. Even if it is complete, any philosophical position will have to deal with similar issues for its own incomplete categories.

\begin{principle}[Principle of Induction for Numbers\Footnote{There are similar principles of induction for ordinals (for gradualist set realists), truth-values (for logicist set and number realists) and for complete categories (for philosophers who follow \GC). The general rule of thumb is that the most powerful principle of induction is the principle that discusses members of an incomplete category.}]
Let $P$ be a property of numbers, and suppose that there exists valid and precise reasoning yielding that the number $n$ denoted by `$n$' has property $P$, using the hypothesis that all numbers less than $n$ have property $P$, as well as the hypothesis that `$n$' denotes a number. Finally, let $N$ be a number. Then $N$ has property $P$.
\end{principle}
The intuitive explanation for why the principle is legitimate is as follows. Suppose that the hypotheses are true, i.e. that $P$ is a property of numbers, and there is valid and precise reasoning yielding that $n$ has property $P$, using the hypothesis that all numbers less than $n$ have property $P$, where $n$ is a free variable. Then we could make the convention that $n$ denotes $0$; if we do this, then the hypotheses of the reasoning will be satisfied because there are no numbers less than $0$. Since the reasoning is valid and precise, it appears that this implies that $0$ has property $P$. Next we could change conventions so that $n$ denotes $1$; the hypotheses of the reasoning are still satisfied because $0$, the only number less than $1$, has property $P$. We could continue this process indefinitely, eventually getting to the number $N$ and showing that $N$ has property $P$.

As formulated above, the most serious objection to the principle of induction is the fact that the phrase `valid and precise reasoning' cannot be made precise, as we demonstrated above. This objection can be dealt with by fixing a precise notion of ``valid and precise reasoning'' and formalizing the principle of induction for this precise notion. For example, the special case of the principle of induction for material implication and universal quantification suffices to formalize its applications to classical mathematics, since all \statements\ in classical mathematics have definite truth-values. 

However, such an approach is insufficient when dealing with largest number contests, since the (vague) general principle of induction can be used to justify the validity of descriptions of larger numbers than can be justified with only the principle of induction for material implication.

\ignore{
\section{Formalization}
\label{sectionformalization}

Mathematical readers may be getting impatient, not only with the philosophical nature of the reasoning used in the previous sections, but also its lack of \emph{formality}. To \emph{formalize} reasoning is to encode it as a string of symbols in a precise language, as well as to specify an axiom system in that language which endorses (the formalized version of) the reasoning. In this section, we address these concerns by defining precise languages for dealing with the philosophical concepts of meaning and reference.

We begin by defining \emph{gradualist number theory}, the basic framework which we will use for our investigations. Gradualist number theory should be philosophically acceptable to anyone who is not an ultrafinitist.

\begin{definition}
The \emph{language of gradualist number theory \GNT} has three basic types of utterances: \emph{negatable formulas}, \emph{unnegatable formulas}, and \emph{terms}. A negatable formula should be understood as one for which the assertion that the formula is false is precise. Nevertheless, a negatable formula does not necessarily have a definite truth-value.

\GNT\ allows the following syntactical constructions for creating formulas:
\begin{itemize}
\item Bounded quantifiers of both types (existential and universal) are allowed. Applying a bounded quantifier to a negatable formula yields a negatable formula; applying a bounded quantifier to an unnegatable formula yields an unnegatable formula.
\item The `and' and `or' operations are allowed, with the same restrictions as above.
\item The `not' operation can be applied to a negatable formula, yielding another negatable formula.
\item Unbounded existential quantifiers are allowed, but the resulting formula is unnegatable.
\item If $x$ is a term, then `$x$ exists' is an unnegatable formula.
\end{itemize}
\GNT\ allows the following syntactical constructions for creating terms:
\begin{itemize}
\item The numerals `$0\RQ,\LQ 1$' and the standard arithmetic operations of addition, multiplication, and exponentiation.\Footnote{The inclusion of exponentiation makes it much easier to formalize other areas of ``finite'' mathematics such as finite set theory and computability theory within number theory. \ego\ am not sure whether \GNT\ without exponentiation is capable of formalizing these fields.}
\item The term `$\fst\{n : \phi(n)\}$', where $\phi$ is a negatable formula with $n$ as a free variable. This term should be read ``the first $n$ such that $\phi(n)$ is true''.
\end{itemize}
The semantics of \GNT\ can be understood algorithmically in the sense that there is a single algorithm that inputs a \statement\ or term of \GNT\ and returns its truth-value or referent, respectively, if it exists, and otherwise fails to halt. The algorithm is defined recursively as follows:
\begin{itemize}
\item To compute the truth-value of a \statement\ of the form $\Phi = \LQ \forall k \leq t \; \phi(k)$', first compute the referent of the term $t$, call it $\ell$, and then compute the truth-values of the \statements\ $\phi(1),\ldots,\phi(\ell)$ in parallel. If any of the computations return the value `false', return `false' as the truth-value of $\Phi$. If all of the computations return the value `true', return `true' as the truth-value of $\Phi$.
\item Apply similar rules with `and', `or', and `not'.
\item To compute the truth-value of a \statement\ of the form $\Phi = \LQ \exists k \; \phi(k)$', compute the truth-values of the \statements\ $\phi(1),\phi(2),\ldots$ in parallel. (The number of \statements\ that we attempt to compute simultaneously increases throughout time via standard techniques.) If any of the computations returns the value `true', return `true' as the truth-value of $\Phi$. There are no circumstances under which we return `false' as the truth-value.
\item To compute the truth-value of a \statement\ of the form $\Phi=$`$t$ exists', compute the referent of the term $t$. If this computation halts, return `true' as the truth-value of $\Phi$. There are no circumstances under which we return `false' as the truth-value.
\item The numerals and standard arithmetic operations are computed normally. To compute the referent of the term `$\fst\{k : \phi(k)\}$', compute the truth-values of the \statements\ $\phi(1),\phi(2),\ldots$ in succession (not in parallel). Return the first $k$ such that the truth-value of $\phi(k)$ is computed as `true' as the referent of the term. Note that the truth-value of $\phi(k)$ will never be computed unless the truth-values of $\phi(1),\ldots,\phi(k - 1)$ have all been computed as `false'. 
\end{itemize}
Note that according to these semantics, a \statement\ containing a term without a referent does not have a truth-value unless the truth-value can be computed without reference to the referent of the term. In particular, this phenomenon may prevent a negatable \statement\ from having a definite truth-value.
\end{definition}

\begin{remark*}
\GNT\ is similar to the language $\Sigma_1(\NT)$ (``the $\Sigma_1$ sentences of number theory'') which is well-known to mathematical logicians. However, it differs from it in that $\Sigma_1(\NT)$ has no term construction rule analogous to the $\LQ\fst$' term construction rule. This means that \GNT\ has a larger class of negatable formulas than $\Sigma_1(\NT)$.
\end{remark*}

To illustrate the power of \GNT, we prove the following result:
\begin{theorem}
\label{theoremGNTalgorithm}
\GNT\ has a term formalizing the phrase `the output of the algorithm whose G\"odel number is $\alpha$', where $\alpha$ is a free variable.
\end{theorem}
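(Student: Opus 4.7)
The plan is to construct the desired term explicitly using G\"odel numbering and the $\fst$ operator, exploiting the fact that \GNT\ has exponentiation as a primitive function. First I would fix a standard programming model (say Turing machines, or the programming language implicit in Contest Definition \ref{lnc2}) together with a G\"odel numbering that assigns to each algorithm a natural number; since the theorem only asserts the existence of a term, any convenient encoding will do. Following the convention of the Busy Beaver contest we think of the algorithm as taking no input, so its ``output'' is just whatever natural number it returns when run from its initial state.

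The key technical step is to code finite sequences of natural numbers as single natural numbers in such a way that the ``$i$th element'' and ``length'' operations are expressible in \GNT. This is where exponentiation becomes essential: the standard encodings---G\"odel's $\beta$-function, prime-power coding via unique factorization, or iterated Cantor pairing---can all be formalized using only $+$, $\cdot$, exponentiation, and bounded quantifiers over indices. With such an encoding in hand, a computation history of the algorithm coded by $\alpha$, that is, a finite sequence of configurations $c_0, c_1, \ldots, c_k$ whose initial term is the starting configuration, whose consecutive pairs respect the transition function extractable from $\alpha$, and whose final term is a halting configuration, can itself be coded as a single number $h$. Verifying that $h$ so codes a halting computation of $\alpha$ involves only bounded quantification over indices less than the length of $h$ together with arithmetic on configuration codes, so it is expressible as a negatable formula, which I would denote $\text{Halt}(\alpha, h)$. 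Similarly, extracting the output from such a computation is a bounded arithmetic operation, so there is a term $\text{Out}(\alpha, h)$ of \GNT\ returning the output.

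With these ingredients in place, the desired term is
\[
T(\alpha) \df \text{Out}\bigl(\alpha,\; \fst\{h : \text{Halt}(\alpha, h)\}\bigr),
\]
which denotes the output of the algorithm coded by $\alpha$ when that algorithm halts, and correctly fails to denote when it does not (in the latter case no $h$ witnesses $\text{Halt}(\alpha, h)$, so the inner $\fst$-term has no referent and propagates this indefiniteness outward through $\text{Out}$). The main obstacle is the middle step: actually exhibiting a workable encoding and verifying that sequence operations together with single-step transition checking really do reduce to bounded formulas over $+$, $\cdot$, exponentiation. This is standard material in bounded arithmetic---essentially the classical construction of a Kleene $T$-predicate and $U$-function inside a weak arithmetical theory---but it is long and tedious; everything else in the proof is routine packaging around it.
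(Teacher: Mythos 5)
Your proposal is correct and follows essentially the same route as the paper's proof: use exponentiation to code finite sequences/sets so that a bounded (hence negatable) computation predicate can be written, then apply the $\fst$ operator to extract a witness, with non-halting correctly yielding a term without a referent. The only cosmetic difference is that you apply a single $\fst$ to a coded computation history and then read off the output, whereas the paper first recovers the halting time via $\fst\{t : \exists k < t \; \phi(\alpha,k,t)\}$ (noting that the output bound $k<t$ keeps the existential bounded and the formula negatable) and then extracts the output with a second $\fst$.
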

\begin{proof}
This is mostly standard, but we highlight the first step in the proof where exponentiation is needed, as well as the last two steps where the $\LQ\fst$' operator is needed. First of all, note that the predicate
\[
\phi(n,m) = \LQ \exists k \leq m \; \exists 0 \leq \ell < 2^n \;\;\; m = k\cdot 2^{n + 1} + 2^n + \ell\RQ
\]
formalizes the phrase `the $n$th binary digit of $m$ is a 1'. This predicate can be thought of as a ``membership'' relation, allowing gradualist finite set theory to be encoded into gradualist number theory. Standard techniques now show that the phrase ``$\alpha$ is the G\"odel number of an algorithm that terminates in $t$ steps or fewer and returns the number $k$'' can be encoded as a negatable predicate $\phi(\alpha,k,t)$. Moreover, $\phi(\alpha,k,t)$ is always false unless $k < t$. Thus the term
\[
T(\alpha) = \LQ \fst\{t : \exists k < t \; \phi(\alpha,k,t)\}\RQ
\]
encodes the phrase `the amount of time it takes for the algorithm $\alpha$ to halt'. Finally, the term
\[
\LQ\fst\{k : \phi(\alpha,k,T(\alpha))\}\RQ
\]
encodes the phrase `the output of the algorithm whose G\"odel number is $\alpha$'.
\end{proof}

Combining Theorem \ref{theoremGNTalgorithm} with our algorithmic description of the semantics of \GNT\ shows that \GNT\ can formalize its own truth predicate; if $\alpha$ is the algorithm computes the truth-value of any \statement\ of \GNT, then the phrase `$\phi$ is a true \statement\ of \GNT' can be formalized in \GNT\ as the phrase `the algorithm $\alpha$ returns `true' on the input $\phi$'.

Let us now address the question of how to axiomatize \GNT. We will denote by \GNT$_0$ the axiomatization of \GNT\ consisting of the following axioms and inference rules:
\begin{itemize}
\item There are axiom schemas corresponding to the standard arithmetic identities, such as $x(y+z)=x\cdot y + x\cdot z$. Each of these axiom schemas should be understood as an inference rule: if $x,y,z$ are terms such that `$x$ exists', `$y$ exists', and `$z$ exists' are believed, we are allowed to infer the corresponding identity.
\item If $\phi(1),\ldots,\phi(\ell - 1)$ are all believed to be true, we are allowed to infer $\forall k < \ell \; \phi(k)$. Conversely, if $\forall k < \ell \; \phi(k)$ has been proven we are allowed to infer $\phi(k)$ for any $k < \ell$.
\item If $\phi(k)$ is believed to be true, we are allowed to infer $\exists k \; \phi(k)$.
\item Apply similar rules with `and' and `or'.
\item De Morgan's laws constitute rules of inference for `not'.
\item If $\forall k < \ell \;\neg \phi(k)$ and $\phi(\ell)$ are believed to be true, we are allowed to infer `$\fst\{k : \phi(k)\} = \ell$' and `$\fst\{k : \phi(k)\} \text{ exists}$'.
\end{itemize}

Note that this axiom system is very weak: it does not have any induction rule. The reason for this is that the ``material implication'' induction can't be used here because there's no universal quantification over \N. Nevertheless, \GNT$_0$\ is capable of proving all true formulas of \GNT.

To add an induction axiom, we observe that the concept of relative provability in \GNT$_0$\ can be formalized in \GNT. We denote the \statement\ `$\phi$ is provable relative to $\psi$ in \GNT$_0$' by $\LQ\phi\Rightarrow_0 \psi$'. Now \GNT$_1$ is \GNT$_0$\ together with the following induction rule:
\begin{itemize}
\item If `$\phi(n) \Rightarrow_0 \phi(n + 1)$' is believed to be true, where $n$ is a numeral representing a free variable, then we can infer $\phi(n)$ for any number $n$.
\end{itemize}
Obviously, if we have any notion of inference we can replace the induction rule by the rule corresponding to that notion of inference. For example, if we denote relative provability in \GNT$_1$ by $\Rightarrow_1$, then we will denote the corresponding axiom system by \GNT$_2$.

}
\ignore{

\section{The relation between \GDST\ and \C(\TC)}
\label{appendixKP}

In this appendix we prove the following claim made in \6\ref{subsectionDST}:
\begin{theorem}
\label{theoremGDSTCTC}
Suppose that there exists a model of \KPC\ in which there exists an uncountable (relative to the model) set. Then \GDST\ is capable of reproducing $\C(\TC)$.
\end{theorem}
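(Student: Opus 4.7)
The plan is to use the hypothesized model of \KPC\ to manufacture inside \GDST\ an admissible ordinal $\kappa$ that exceeds every ordinal nameable by a term of \GDST, and then to run the fixed-point argument of the footnote in \6\ref{subsectionDST} at the ordinal $\kappa$. First I would augment \GDST\ with a constant $\kappa$ together with the bounded axiom ``$L_\kappa$ models \KPC\ and contains an uncountable set''; the hypothesis of the theorem guarantees that this axiom is satisfiable and the constructibility of $\kappa$ keeps the axiom descriptionally legitimate. Next, using Choice inside $L_\kappa$ together with the observation that \GDST\ has only countably many terms, I would argue that $\kappa$ itself cannot be referred to by any \GDST-term: any purported description of $\kappa$ could be transferred, via the Mostowski collapse of a countable $L_\kappa$-elementary submodel, into an ordinal strictly below $\kappa$, contradicting the description's intended denotation. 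This reduces the hypothesis of Theorem \ref{theoremGDSTCTC} to the axiom ``there is an ordinal not referred to by any term of \GDST'' used in the footnote.

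Once $\kappa$ is in place, the second step is to apply the footnote's argument verbatim with $\kappa$ in the role of $\beta$: for any \GDST-term $f$ defining a monotone operator from ordinals to subsets of $\N$, each $n$ enters $f$ at some nameable stage or never, and $\kappa$ bounds all such stages, so $f(\kappa) = f(\kappa+1)$. Admissibility of $\kappa$ (via $\Sigma_1$-replacement within $L_\kappa$) ensures that the transfinite iterate $f^{(\kappa)}$ is a legitimate object of \GDST, so the least fixed point of the positive operator defined by any \GC-formula $\Phi(n,C)$ --- i.e.\ the generated category $C_\Phi$ --- is uniformly definable in \GDST\ as a bounded subset of $\N$. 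The resulting membership predicate is strongly negatable, hence by Theorem \ref{theoremSNdefinite} definite, which is precisely what the classicization \C(\TC) demands of its atomic predicates; classical connectives and quantifiers then translate \emph{mutatis mutandis}, yielding a reproduction of \C(\TC) inside \GDST.

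The main obstacle I anticipate lies in handling the self-referential character of \GC, where the defining formula $\Phi$ for one generated category may itself mention other generated categories whose well-definedness has not yet been established. My proposed resolution is a stratification of \GC-formulas by a rank measuring nested references to the $C_{(\cdot)}$ operator, combined with a simultaneous transfinite recursion inside $L_\kappa$ assigning to each formula of rank $\leq r$ its intended extension, and an outer induction on $r$ showing that this $L_\kappa$-internal assignment agrees stage-by-stage with the intuitive generative semantics of Principle \ref{principleGC}. A secondary point of care is verifying that the hypothesized \emph{uncountability} (and not merely the existence of some admissible ordinal) is actually the correct strength: uncountability is exactly what blocks the countably many \GDST-definable ordinals from being cofinal in $\kappa$, which is the step where the Mostowski-collapse argument in the first paragraph is actually used.
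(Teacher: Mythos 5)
Your route is genuinely different from the paper's, and its first step has a gap that I do not think can be repaired as stated. You try to derive the footnote's axiom --- ``there is an ordinal that cannot be referred to by a term of \GDST'' --- from the theorem's hypothesis, by arguing via Mostowski collapse that your $\kappa$ is unnameable. But the collapse argument yields no contradiction: transferring a description of $\kappa$ into a countable elementary submodel and collapsing produces an ordinal $\pi(\kappa)<\kappa$ satisfying the same formula \emph{in the collapsed structure}, which is perfectly consistent with the description denoting $\kappa$ in the intended structure; descriptions of ordinals are not absolute under collapse. Worse, the ordinal you need to be unnameable is in fact nameable: ``the first uncountable ordinal of the minimal admissible $L_\alpha$ satisfying `there is an uncountable set'\,'' is a legitimate description, and the paper explicitly requires such languages to carry primitive notation for ``the first cardinal with property $P$.'' Concretely, if $f(\alpha)=\{0\}$ when $\alpha$ exceeds the model's first uncountable ordinal and $f(\alpha)=\emptyset$ otherwise, then the entry-stage term $\fst\{\alpha : 0\in f(\alpha+1)\}$ denotes exactly that ordinal, so the footnote's fixed-point argument fails at it. More structurally, ``there is a \GDST-unnameable ordinal'' cannot be a theorem obtainable this way: any uniform recipe producing such an ordinal from describable data would itself name it (a Berry-paradox obstruction), which is precisely why the paper treats it as an additional axiom rather than a consequence of the consistency-type hypothesis. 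Your closing remark about uncountability blocking cofinality also misidentifies what the footnote needs --- only that $\beta$ is never the value of an entry-stage term, not that nameable ordinals are bounded away from it --- and rests on an external count of terms that cannot be internalized.

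The paper avoids unnameability entirely. It passes to a canonical minimal model $L_{\alpha_0}$ of \KPC\ satisfying ``$L_{\aleph_1}$ exists but itself contains no uncountable set,'' and then proves a condensation-style club lemma: there is a club of countable $\beta$ with $\ST_\beta=\ST_{\aleph_1}$. Two distinct such ordinals force the (uniformly definable) state of any transfinite algorithm to coincide at those stages, so every transfinite algorithm halts or loops by a countable ordinal, hence by stage $\alpha_0$. Membership in a generated category is then decided in \GDST\ simply by running the computation up to the \emph{describable} ordinal $\alpha_0$, which is what reproducing the classicization $\C(\GC)$ requires. If you want to salvage your approach, the stabilization has to come from the theory $\ST_\beta$ repeating along a club, not from an ordinal escaping description; your stratification of \GC-formulas in the last paragraph is a sensible supplement either way, but it does not touch the main gap.
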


Here \KPC\ denotes Kripke--Platek set theory with choice.

\begin{lemma}[\KPC]
Suppose that there exists an uncountable set. Then if $\aleph_1$ is the first uncountable ordinal, then $\LL \df L_{\aleph_1}$ is a model of \KPC.
\end{lemma}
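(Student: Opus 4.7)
The plan is to verify the axioms of \KPC\ for $\LL = L_{\aleph_1}$ one group at a time, leveraging the standard fact that admissibility of the ordinal $\alpha$ guarantees $L_\alpha \models \KP$, and then separately handling Choice via the canonical well-ordering of $L$.

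First I would observe the cardinality bookkeeping: by the usual induction, $|L_\alpha| = |\alpha|$ for infinite $\alpha$, so every $x \in L_{\aleph_1}$ lies in some $L_\beta$ with $\beta < \aleph_1$ and is therefore (internally and externally) countable. Extensionality and Foundation are inherited from $V$ without work. Pairing and Union are immediate from the definition of the $L$-hierarchy: if $x,y \in L_\beta$ then $\{x,y\} = \{z \in L_\beta : z = x \vee z = y\} \in L_{\beta+1}$, and similarly $\bigcup x$ is definable over $L_\beta$ for any large enough $\beta < \aleph_1$. Infinity holds because $\omega \in L_{\omega+1} \subseteq \LL$. For $\Delta_0$-Separation, if $x \in L_\beta$ and $\phi$ is $\Delta_0$ with parameters from $L_\beta$, then $\{y \in x : \phi(y)\}$ is precisely a definable subset of $L_\beta$, hence an element of $L_{\beta+1} \subseteq \LL$.

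The only nontrivial axiom scheme is $\Delta_0$-Collection, which requires that $\aleph_1$ be admissible. Here the hard part is showing that whenever $x \in \LL$ and $\forall u \in x \,\exists v \, \phi(u,v)$ (with $\phi$ a $\Delta_0$ formula with parameters from $\LL$), there is some $\beta < \aleph_1$ with $\forall u \in x \, \exists v \in L_\beta \, \phi(u,v)$. The point is that $x$ is countable in $V$, so for each $u \in x$ the $\Sigma_1$-definable ordinal $\beta(u) = \min\{\gamma : \exists v \in L_\gamma \, \phi(u,v)\}$ is a countable ordinal, and the supremum of countably many countable ordinals is countable, hence less than $\aleph_1$ by regularity. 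This is essentially the verification that every uncountable cardinal is admissible; I would cite or reproduce this in one or two lines.

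Finally, for Choice, I would invoke the canonical global well-ordering $<_L$ of $L$, which is $\Sigma_1$-definable over the $L$-hierarchy. For any $x \in \LL$ pick $\beta < \aleph_1$ with $x \in L_\beta$; then $<_L \restriction L_\beta$ is definable over $L_\beta$, hence lies in $L_{\beta+1} \subseteq \LL$, and its restriction to $x$ gives a well-ordering of $x$ inside $\LL$, from which a choice function for any $y \in \LL$ with $\emptyset \notin y$ can be constructed in $\LL$. I expect the collection step to be the main (only) obstacle; everything else is a routine closure-of-the-hierarchy argument.
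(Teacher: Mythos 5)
Your proposal is correct and its crux is the same as the paper's: the only substantive axiom is $\Delta_0$-Collection/restricted replacement, which both arguments settle by noting that the relevant set is countable, that each required witness appears at some countable stage $L_{\gamma}$, and that a countable supremum of countable ordinals is countable (via choice/regularity of $\aleph_1$), so all witnesses lie in some $L_\alpha$ with $\alpha+1<\aleph_1$. You are in fact slightly more complete than the paper, which treats the remaining axioms (including Choice, despite the lemma being about \KPC) as routine and omits them.
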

\begin{proof}
The main thing we need to check is restricted replacement. So let $a\in \LL$, and let $\phi$ be a formula with bounded quantifiers including constants $c_1,\ldots,c_n\in \LL$ such that for all $x\in a$, there exists a unique $f(x)\in \LL$ such that $\phi(x,f(x),c_1,\ldots,c_n)$ holds. We need to show that there exists $b\in \LL$ such that $b = \{f(x) : x\in a\}$.

First, by the axiom of restricted replacement, there exists a set $b$ such that $b = \{f(x) : x\in a\}$. Here, we can apply restricted replacement because the notion of truth for the sentence $\phi(x,f(x),c_1,\ldots,c_n)$ with respect to the model $\LL$ can be defined using bounded quantifiers. To complete the proof, we need to show that $b\in \LL$.

Since $a\in \LL$, $a$ is countable, and thus $b$ is countable as well. Moreover, every element of $b$ is a member of $\LL$. For each $y\in b$, let $\gamma(y)$ be the smallest ordinal such that $y\in L_{\gamma(y)}$. By the axioms of union and restricted replacement, the ordinal $\alpha = \bigcup_{y\in b} \gamma(y)$ exists. Since $\alpha$ is the union of countably many countable ordinals, by the axiom of choice $\alpha$ is countable. Thus $\alpha < \aleph_1$ and in particular $\alpha+1 < \aleph_1$. Now since $b \subset L_\alpha$, we have $b = \{y\in L_\alpha : \exists x\in a \; f(x) = y\} \in L_{\alpha+1} \subset \LL$. This completes the proof.
\end{proof}

Let $\SS$ be the sentence ``there exists an uncountable set'', or equivalently ``$\LL$ exists''.
\begin{lemma}[\KP]
Suppose that there exists a model of \KPC\ satisfying $\SS$. Then there exists a model of \KPC\ satisfying ``$\LL$ exists but does not satisfy $\SS$''. This model can be chosen in a canonical manner.
\end{lemma}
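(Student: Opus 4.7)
The plan is to take the witnessing model to be the full constructible subuniverse $L^M := \bigcup_{\alpha \in \mathrm{Ord}^M} L_\alpha^M$ of the given model $M$. This is the canonical choice because $L^M$ is the unique minimal inner model of $M$ satisfying $V = L$, and the assignment $M \mapsto L^M$ is definable uniformly in $M$. Note in particular that $L^M$ is not the same as $\LL^M = L_{\aleph_1^M}^M$, since the latter typically fails to satisfy the first conjunct ``$\LL$ exists'' whenever $\aleph_1^{L^M} < \aleph_1^M$.

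First I would verify that $L^M$ is a model of \KPC. This is standard: the $L$-hierarchy is absolute for transitive models of \KP, and the canonical wellordering of $L$ delivers the axiom of choice. Next I would verify $L^M \models \SS$. The ordinal $\aleph_1^M$ exists in $M$ by hypothesis and lies in $L^M$, since $L^M$ contains every ordinal of $M$. Because $L^M \subseteq M$ and $M$ has no bijection $\omega \to \aleph_1^M$, neither does $L^M$; hence $L^M$ sees $\aleph_1^M$ as uncountable, so $\aleph_1^{L^M}$ exists (with $\aleph_1^{L^M} \leq \aleph_1^M$) and $L^M \models \SS$.

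The substantive step is to show $L^M \models$ ``$\LL$ does not satisfy $\SS$''. Inside $L^M$ we have $V = L$, so $\LL^{L^M} = L_{\aleph_1^{L^M}}^{L^M}$. Using the theory of the $L$-hierarchy developed in \KPC, the canonical wellordering of $L_\alpha$ is definable over $L_\alpha$ and has order type $|\alpha|^L$, yielding (when $L$ thinks $\alpha$ is countable) a bijection $\omega \to L_\alpha$ that lies in $L_{\alpha+1}$. Restricting to the preimage of $\alpha \subseteq L_\alpha$ gives a bijection $\omega \to \alpha$ in $L_{\alpha+1}$. Applied to each $\alpha < \aleph_1^{L^M}$, this shows that inside $L^M$, every ordinal of $\LL^{L^M}$ is countable in $\LL^{L^M}$. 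Combined with the axiom of choice, every set in $\LL^{L^M}$ is countable in $\LL^{L^M}$, so $\LL^{L^M} \not\models \SS$.

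The main obstacle is the \KPC-level constructibility theory required to assert that the canonical bijection $\omega \to L_\alpha$ enters the $L$-hierarchy at stage $\alpha + 1$, which rests on the standard treatment of the Gödel pairing function and the $\Sigma_1$-definability of the $L$-hierarchy. Granted these facts, the map $M \mapsto L^M$ furnishes the canonical witness promised by the lemma.
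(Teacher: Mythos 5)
Your route is genuinely different from the paper's. The paper works internally in \KPC: assuming $\LL$ exists and satisfies $\SS$, it takes the \emph{least admissible} ordinal $\alpha_0$ with $L_{\alpha_0}\models\SS$, notes that $M=L_{\alpha_0}$ computes $\LL^M=L_\beta$ for some admissible $\beta<\alpha_0$, so that minimality forces $L_\beta\not\models\SS$; a short case split on the given model then yields the external statement. You instead pass from the given model $M$ to its constructible inner model $L^M$. That is a perfectly reasonable (and arguably more uniformly canonical) choice, it avoids the case split, and your verifications that $L^M\models\KPC$ and that $L^M\models\SS$ (by downward absoluteness of uncountability) are fine. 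The trade-off is that you land in a class model of $M$ rather than a set-sized level of the hierarchy, and you must develop enough constructibility theory inside \KPC\ to run a condensation argument, whereas the paper only needs absoluteness of ``$L_\beta\models\SS$'' between admissible levels.

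However, the justification you give for the one substantive step is wrong. You assert that the canonical wellordering of $L_\alpha$ has order type $|\alpha|^L$, and conclude that when $\alpha$ is countable in $L$ a bijection $\omega\to L_\alpha$ already lies in $L_{\alpha+1}$. The canonical wellordering of $L_\alpha$ refines the stratification by levels and the ordinals below $\alpha$ enter at distinct levels, so its order type is at least $\alpha$, not $|\alpha|$; and the least stage of the $L$-hierarchy at which a given countable $\alpha$ is collapsed can lie well above $\alpha+1$ (it fails even for pointwise definable levels, where the needed surjection requires a satisfaction class not present in $L_{\alpha+1}$, and it fails badly at gap ordinals, where no new subset of $\omega$ appears at stage $\alpha+1$ at all). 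What your argument actually needs is only the weaker fact that, working in $L^M$ where $V=L$ holds, every $\alpha<\aleph_1$ is collapsed at \emph{some} stage below $\aleph_1$, so that $\LL^{L^M}=L_{\aleph_1^{L^M}}$ believes every ordinal, and hence every set, is countable. That weaker fact is true, but it is precisely G\"odel's condensation argument ($\Sigma_1$ hull of the collapsing map, transitive collapse, condensation), and you need to invoke it explicitly and check that it is available in \KPC\ rather than only in \ZF. As written, the step that carries the whole proof rests on a false lemma.
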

\begin{proof}
First we prove the lemma in \KPC\ under the hypothesis that $\LL$ exists and satisfies $\SS$. In this case, there exists an admissible ordinal $\alpha$ such that $L_\alpha$ satisfies $\SS$. Let $\alpha_0$ be the least such ordinal. Then $M = L_{\alpha_0}$ is a model of \KPC. Since $M$ satisfies $\SS$, there exists an ordinal $\beta < \alpha_0$ such that $M$ satisfies ``$\LL = L_\beta$''. But by the minimality of $\alpha$, $L_\beta$ does not satisfy $\SS$, and thus $M$ satisfies ``$\LL$ does not satisfy $\SS$''.

Now suppose that $M_0$ is a model of \KPC\ satisfying $\SS$. If $M_0$ satisfies ``$\LL$ exists but does not satisfy $\SS$'', then the conclusion of the lemma holds automatically. Otherwise, $M_0$ satisfies ``$\LL$ exists and satisfies $\SS$'', so the previous paragraph shows that the conclusion of the lemma holds. This completes the proof.
\end{proof}

\begin{lemma}[\KPC]
\label{lemmaclubset}
Suppose that $\LL$ exists. Then there exists a club (closed and unbounded) set $F\subset \aleph_1$ such that for all $\beta\in F$, $\ST_\beta = \ST_{\aleph_1}$. In particular, there exist $\beta,\gamma\in \aleph_1$ distinct such that $\ST_\beta = \ST_\gamma$.
\end{lemma}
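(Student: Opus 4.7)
The plan is to let $F$ consist of those countable ordinals $\beta$ for which $L_\beta$ is an elementary submodel of $L_{\aleph_1} = \LL$. Membership of $\beta$ in $F$ immediately yields $\ST_\beta = \ST_{\aleph_1}$, since elementary submodels agree on the truth-value of every sentence (in particular every parameter-free sentence), so reducing to the task of showing $F$ is club is the main conceptual step.

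For closure, I would argue as follows. Suppose $(\beta_i)_{i<\lambda}$ is a strictly increasing sequence in $F$ with $\lambda < \aleph_1$. Regularity of $\aleph_1$ (which follows from choice in \KPC) gives $\beta \df \sup_i \beta_i < \aleph_1$, and the definition of the constructible hierarchy at limits gives $L_\beta = \bigcup_i L_{\beta_i}$. Applying the Tarski--Vaught criterion to the elementary chain $(L_{\beta_i})_i$ shows that $L_\beta$ is elementary in the common supermodel $L_{\aleph_1}$, so $\beta \in F$.

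For unboundedness, given any $\alpha < \aleph_1$ I would invoke the downward L\"owenheim--Skolem theorem (available in \KPC) to produce a countable elementary substructure $M \prec L_{\aleph_1}$ with $\alpha \cup \{\alpha\} \subseteq M$. Because $\LL$ satisfies $V = L$, the condensation lemma applies: the transitive Mostowski collapse of $M$ is of the form $L_\beta$ for some countable ordinal $\beta$, and $\beta > \alpha$ since $\alpha$ is pointwise fixed by the collapse on account of its transitivity. The collapsing isomorphism transports elementarity, yielding $L_\beta \prec L_{\aleph_1}$ and hence $\beta \in F$. The ``in particular'' clause is then immediate, since a club in $\aleph_1$ is uncountable and so contains at least two (indeed, uncountably many) distinct elements.

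The main obstacle will be verifying that both L\"owenheim--Skolem and condensation genuinely go through in the weak base theory \KPC\ under the single extra hypothesis that $\LL$ exists. L\"owenheim--Skolem requires choice together with enough closure to build a Skolem hull inside $\LL$, both of which are available. Condensation for $L$ is a standard result that reduces to $\Delta_0$-absoluteness for the defining formulas of the $L$-hierarchy and is provable already in \KP; nevertheless some care is needed to check that the Skolem hull lives inside $\LL$ (so that its collapse is itself a set of $\LL$), which is why the assumption that $\LL$ exists is essential.
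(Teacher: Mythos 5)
Your overall strategy --- producing a club of countable ordinals at which the full theory of $\LL$ reflects --- is the same one the paper uses, but there is a genuine gap where your two halves meet. In the unboundedness step, collapsing a countable $M \prec L_{\aleph_1}$ yields an isomorphism $\pi : M \to L_\beta$, and what this transports is elementary \emph{equivalence}: $L_\beta \equiv M \equiv L_{\aleph_1}$. It does \emph{not} yield $L_\beta \prec L_{\aleph_1}$, because elementarity of the inclusion $L_\beta \subseteq L_{\aleph_1}$ would require $\pi$ to be the identity, i.e. $M$ to already be transitive; a single Skolem hull containing some large countable ordinal $\gamma$ with $\gamma \not\subseteq M$ has $\pi(\gamma) < \gamma$, and then $L_\beta$ and $L_{\aleph_1}$ can easily disagree about formulas with the parameter $\pi(\gamma)$. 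So your unboundedness argument shows that $\{\beta : L_\beta \equiv L_{\aleph_1}\}$ is unbounded, while your (correct) closure argument applies to $F = \{\beta : L_\beta \prec L_{\aleph_1}\}$; the first set is not obviously closed, since elementary equivalence is not preserved under unions of non-elementary chains, and the second has not been shown unbounded. As written you obtain the ``in particular'' clause but not the club asserted by the lemma.

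The standard fixes are either (i) to iterate the hull $\omega$ times so that $M = \bigcup_n M_n$ satisfies $M \cap \aleph_1 = \beta \in \aleph_1$, and then argue separately that such an $M$ is transitive (equivalently $M = L_\beta$), which needs an additional fact such as ``$\LL$ believes each $L_\gamma$ with $\gamma < \aleph_1$ is countable''; or (ii) to dispense with condensation altogether and verify unboundedness of $F$ directly via closure points of witness functions, i.e. the Tarski--Vaught test applied to the inclusion $L_\beta \subseteq L_{\aleph_1}$ for suitable limit $\beta$. The paper's proof takes route (ii) in a hand-rolled form: it proves by induction on formulas that each $\phi$ with parameters that is true in $\LL$ reflects on a club $F_\phi$, handling the universal quantifier by intersecting clubs $G_x$ over all parameters $x$ and recovering unboundedness through a countable iteration $\beta_0 < \beta_1 < \cdots$ that at each stage intersects only the countably many clubs indexed by $x \in L_{\beta_n}$. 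That version needs neither the L\"owenheim--Skolem theorem as a packaged tool nor condensation, which is also why it sits more comfortably in the weak base theory \KPC.
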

Here we use the notation
\[
\ST_\beta = \{\phi \in \ST : \text{$\phi^\beta$ is true}\},
\]
where $\phi^\beta$ denotes the result of replacing all unbounded quantifiers in $\phi$ with quantifiers that range over $L_\beta$. Equivalently, $\phi\in\ST_\beta$ if $\phi$ is true of $L_\beta$.
\begin{proof}
One proves by induction that for every formula $\phi\in \ST_{\aleph_1}$ with constants $c_1,\ldots,c_n\in \LL$, there exists a club set $F_\phi \subset \aleph_1$ such that for all $\beta\in F_\phi$, $\phi^\beta(c_1,\ldots,c_n)$ holds. The most difficult case is where $\phi = \forall x \; \psi^\beta(c_1,\ldots,c_n,x)$ where $\psi$ satisfies the inductive hypothesis. Then for each $x\in \LL$ there exists a club set $F_x\subset \aleph_1$ such that for all $\beta\in F_x$, $\psi^\beta(c_1,\ldots,c_n,x)$ holds. Now let $G_x = F_x \cup\bigcup\{\beta\in\aleph_1 : x\notin L_\beta\}$. Then $G_x$ is closed and thus so is $F \df \bigcap_{x\in \LL} G_x$. To show that $F$ is unbounded, fix $\beta_0 < \aleph_1$ and for each $n$, if $\beta_n$ is defined then let $F_n = \CO{\beta_n}{\infty}\cap \bigcap_{x\in L_{\beta_n}} G_x$ and $\beta_{n+1} = \inf(F_n)$. Since the collection of club sets is closed under countable intersection, $F_n$ is a club set and thus $\beta_{n+1}$ is well-defined. Now let $F_\omega = \bigcap_n F_n$ and $\beta = \lim_{n\to\infty} \beta_n$. For all $m,n$ with $m\geq n$, we have $\beta_m \in F_n$. Since $F_n$ is closed, taking the limit yields $\beta\in F_n$, and taking the intersection we have $\beta\in F_\omega = \CO\beta\infty\cap \bigcap_{x\in L_\beta} G_x$. Now for all $x\in \LL\butnot L_\beta$, by the definition of $G_x$ we have $\beta\in G_x$,so $\beta\in G_x$ for all $x\in\LL$. Thus $\beta_0 \leq \beta \in F$, so $F$ is unbounded and thus club.
\end{proof}

\begin{lemma}[\KPC]
Suppose that $\LL$ exists. Then every transfinite algorithm either halts or loops by a countable ordinal.
\end{lemma}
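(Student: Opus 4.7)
The plan is to show that for a transfinite algorithm $\alpha$ that does not halt, the sequence of its states $(s_\beta)_{\beta < \aleph_1}$ must exhibit a repetition at two countable ordinals, thereby producing a loop. First I would fix a canonical set-theoretic encoding of ``states'' so that the map $\beta \mapsto s_\beta$ is definable in $\LL$ with $\alpha$ as a parameter: successor stages are first-order definable over the previous stage (via the ``transition function'' built into $\alpha$), and limit stages are defined by an appropriate limit procedure (for instance, a liminf on tape contents, for something like an infinite-time Turing machine model). Using the axioms of \KPC, this would give a well-defined class function $\beta \mapsto s_\beta$ on ordinals.

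Next I would apply Lemma \ref{lemmaclubset} with $\alpha$ adjoined as a constant, obtaining a club set $F \subset \aleph_1$ such that $\ST_\beta = \ST_{\aleph_1}$ for every $\beta \in F$. I would then argue that for $\beta \in F$ the ordinal $\beta$ is ``admissible enough'' that $s_\beta$ is definable inside $L_\beta$ from $\alpha$ alone, and moreover that the same defining formula applied inside $L_\gamma$ (for any other $\gamma \in F$) produces $s_\gamma$. Picking two distinct $\beta < \gamma$ in $F$ past any finite halting time, I would conclude $s_\beta = s_\gamma$ by transferring the unique-definability of the state across the elementary equivalence $L_\beta \equiv L_\gamma$, and this equality of states forces the algorithm to loop with period $\gamma - \beta$.

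The main obstacle will be bridging the gap between elementary equivalence (which is all that Lemma \ref{lemmaclubset} literally produces) and the concrete identity $s_\beta = s_\gamma$ needed for the algorithm to actually loop in $\LL$ rather than merely in ``elementarily indistinguishable copies'' of it. One natural remedy is to refine the club argument of Lemma \ref{lemmaclubset} using the downward Löwenheim–Skolem construction together with Mostowski collapse (both available in \KPC) to produce not just two elementarily equivalent models $L_\beta, L_\gamma$ but a pair admitting a canonical elementary embedding $j \colon L_\beta \hookrightarrow L_\gamma$ fixing $\alpha$; then the image $j(s_\beta)$ equals $s_\gamma$, and by condensation (again inside \KPC) $j$ restricted to the transitive part is the identity, forcing $s_\beta = s_\gamma$ outright. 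Alternatively, one could sidestep the issue by showing that the sequence $\beta \mapsto s_\beta$ eventually takes values in a fixed countable subset of $\LL$ (using restricted replacement on a stabilizing cofinal sequence within $F$) and then applying pigeonhole directly.
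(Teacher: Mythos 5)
Your core strategy is the same as the paper's: define the state $s_\beta$ of the computation uniformly, invoke Lemma \ref{lemmaclubset} to find distinct countable $\beta,\gamma$ with $\ST_\beta=\ST_\gamma$, conclude $s_\beta=s_\gamma$, and hence get a loop. Where you diverge is in how you treat what you call the ``main obstacle,'' and there you are making the step harder than it is. The state of a transfinite algorithm at any stage is a countable configuration indexed by absolute objects (cells or registers indexed by natural numbers, finitely many symbols), and the algorithm itself is a finite program, hence parameter-free definable. So each individual bit of $s_\beta$ is pinned down by a parameter-free sentence of the form ``cell $n$ holds symbol $s$ at the current stage,'' uniformly in $n$ and $s$, and the equality of theories $\ST_\beta=\ST_\gamma$ yields the literal identity $s_\beta=s_\gamma$ bit by bit. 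No elementary embedding, condensation, or Mostowski collapse is needed, because nothing in the state is a non-absolute definable object (such as an ordinal of the model) that could differ between elementarily equivalent levels. This is precisely what the paper means by the state being ``defined in $\ST_\alpha$ in a uniform way.''

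Of your two proposed remedies for the perceived gap, the first (refining the club so that the relevant $L_\delta$ are elementary substructures of $L_{\aleph_1}$, making the inclusions elementary) is workable but unnecessary for the reason above. The second does not work as stated: the possible states range over the reals of $\LL$, a set of size $\aleph_1$, so pigeonhole over $\aleph_1$ many stages yields no repetition, and the claim that the state sequence ``eventually takes values in a fixed countable subset'' is essentially the stabilization you are trying to prove. Finally, note that both you and the paper elide the last step: $s_\beta=s_\gamma$ gives a repeated configuration, but since the limit-stage rule depends on the whole history rather than only on the current state, concluding that the machine genuinely loops forever requires the standard additional argument from the theory of infinite-time machines. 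Since the paper's own proof is only a sketch at this point, that omission is not a defect specific to your write-up.
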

\begin{proof}
The current state of a transfinite algorithm at time $\alpha$ can be defined in $\ST_\alpha$ in a uniform way. Since by Lemma \ref{lemmaclubset} there exist distinct countable ordinals $\beta,\gamma$ such that $\ST_\beta = \ST_\gamma$, it follows that the state of any transfinite algorithm must be equal at those two ordinals. If the algorithm has not halted yet, this means that it will loop forever.
\end{proof}

\begin{proof}[Proof of Theorem \ref{theoremGDSTCTC}]
Let $M$ be a model of \KP\ in which there exists an uncountable (relative to the model) ordinal. Without loss of generality we may suppose that $M = L_\alpha$ for some $\alpha$. By the previous lemma, in $M$ every transfinite algorithm either halts or loops by an ordinal $<\alpha$. So we can tell whether or not any given transfininte computation halts by asking whether or not it halts by stage $\alpha$.
\end{proof}

}

\ignore{

\begin{theorem}[\GKP]
\label{theoremtransdescr}
Suppose that there is a model of \KPC\ satisfying $\SS$. Then transfinite computation is descriptionalist in the following loose sense: If $\alpha$ is a transfinite computation, then there is a canonical way of labelling the stages of the computation.
\end{theorem}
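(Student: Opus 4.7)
The plan is to assemble the three preceding lemmas of this appendix into a canonical recipe for labelling. First I would apply the canonical-extraction lemma (the second lemma of this appendix) to the hypothesis: from the existence of a model of \KPC\ satisfying $\SS$, obtain in a canonical fashion a model $M$ of \KPC\ in which $\LL$ exists but does not satisfy $\SS$. Inside $M$ the constructible universe is $\LL = L_{\aleph_1^M}$, and the failure of $\SS$ inside $\LL$ means that, externally, every ordinal below $\aleph_1^M$ is countable. This single step pins down the ``universe of discourse'' in a canonical way, and all subsequent constructions will take place inside $M$.

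Next I would invoke the third lemma of the appendix (``$\LL$ exists $\Rightarrow$ every transfinite algorithm halts or loops by a countable ordinal''), applied inside $M$: for any transfinite computation $\alpha$ there is an ordinal $\beta(\alpha)<\aleph_1^M$ at which $\alpha$ has either halted or begun to loop. Hence the meaningful stages of $\alpha$ form an initial segment of countable ordinals of $M$, and to label the stages canonically it suffices to label each countable ordinal $\gamma<\aleph_1^M$ canonically.

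For the labelling itself I would use the constructibility order $<_L$: to each $\gamma<\aleph_1^M$ associate the $<_L$-least well-ordering $c(\gamma)$ of $\omega$ whose order type is $\gamma$. Such a code exists in $L_{\gamma+\omega}\subseteq\LL\subseteq M$ because $M\models\KPC$ proves that $<_L$ is a definable well-ordering of $\LL$ and that $\Sigma_1$-definable subclasses have $<_L$-least elements; the map $\gamma\mapsto c(\gamma)$ is definable by $\Sigma_1$-recursion. The canonical label of stage $\gamma$ of the computation $\alpha$ is then the pair $(\alpha,c(\gamma))$, together with the state $s_\gamma$ computed from $\alpha$ and the earlier states (which itself is uniformly $\Sigma_1$-definable by a standard coding of transfinite recursion in \KPC).

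The main obstacle is pinning down the admittedly ``loose'' sense of canonicity: the content of the theorem really is the choice of a single definable labelling scheme, and one must verify uniformity of $c(\gamma)$ and of the stage-state function in $\alpha$ and $\gamma$, which requires the tedious but routine check that the stage function for a transfinite algorithm is $\Sigma_1$-definable in \KPC\ and that $<_L$-least selection preserves this complexity. Once these uniformity checks are in hand, the composed assignment $\gamma\mapsto (c(\gamma),s_\gamma)$ gives the promised canonical labelling.
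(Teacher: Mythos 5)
Your proof matches the paper's essentially verbatim: pass to the canonical minimal model of \KPC\ in which $\LL$ exists but fails $\SS$, apply the halting/looping-by-a-countable-ordinal lemma, and then canonically count the relevant countable ordinals (the paper compresses your $<_L$-least-code construction into the remark that every countable ordinal is ``canonically countable'' because $\LL$ satisfies that it is countable). One small inaccuracy: the $<_L$-least real coding $\gamma$ need not appear by $L_{\gamma+\omega}$ --- it can first show up much higher in the $L$-hierarchy --- but it does lie in $\LL$ precisely because $\LL\not\models\SS$, which is exactly the justification the paper relies on.
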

\begin{proof}
By previous lemmas, there is a canonical minimal model of \KPC\ satisfying ``$\LL$ exists but does not satisfy $\SS$''. In this model, every transfinite algorithm either halts or loops by a countable ordinal, and also every countable ordinal is canonically countable (since $\LL$ satisfies that it is countable). So we can take a canonical counting of the stages of the computation.
\end{proof}
}

\section{More on the Busy Beaver contest}
\label{appendixmoreBBC}

In the Introduction we saw that the axiomatic strategy for the Busy Beaver contest is relatively epistemologically optimal. But is there a truly optimal strategy? Of course, if we fix a bound on the size of the entries and assume the Principle of Limited Omniscience, then one of the possible entries must in fact be the highest scoring one. Can we know what it is? The following theorem shows that we can know some nontrivial facts about the winning entry, while at the same time proving that the entry itself is elusive:

\begin{theorem}
\label{theoremBBCincompressible}
Let $\omega$ be the highest scoring entry to the Busy Beaver contest among all entries of length at most $n$. Then the Kolmogorov complexity of $\omega$ is at least $n - C$, where $C$ is an absolute constant. In particular, the length of $\omega$ is at least $n - C$, and $\omega$ is $C$-incompressible.
\end{theorem}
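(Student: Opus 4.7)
The plan is to carry out a standard Kolmogorov incompressibility argument. Suppose for contradiction that $K(\omega) < n - C$ for a constant $C$ to be determined. Then by definition of Kolmogorov complexity, there is a program $p$ of length less than $n - C$ whose output is the string $\omega$. The goal is to use $p$ to build a valid Busy Beaver entry of length $\leq n$ whose score strictly exceeds $[\omega]$, contradicting the maximality of $\omega$.

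The construction is as follows. I would take $q$ to be the program ``run $p$ to obtain a string $\sigma$; then simulate $\sigma$ as a program and output $1$ plus its result.'' Since $p$ halts with output $\omega$, and since $\omega$ is a valid Busy Beaver entry (hence halts with some numerical output $[\omega]$), the program $q$ halts with output $[\omega] + 1$. Crucially, $|q| \leq |p| + C_0$, where $C_0$ is a fixed constant depending only on the overhead needed to encode the ``simulate, then add one'' wrapper in the programming language used to define the Busy Beaver contest. Setting $C := C_0$ and assuming $K(\omega) < n - C$, we would have $|q| < n$, so $q$ is itself a valid entry with $|q| \leq n$, yet its score is $[\omega] + 1 > [\omega]$. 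This contradicts the assumption that $\omega$ was the highest-scoring entry among entries of length at most $n$, establishing $K(\omega) \geq n - C$.

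For the ``in particular'' clauses: every string $\sigma$ satisfies $K(\sigma) \leq |\sigma| + C_1$ for a further absolute constant $C_1$, via the trivial printing program. Hence the bound $K(\omega) \geq n - C$ immediately yields $|\omega| \geq n - C - C_1$, so after enlarging $C$ we obtain $|\omega| \geq n - C$. The $C$-incompressibility of $\omega$ then follows by combining $K(\omega) \geq n - C$ with $|\omega| \leq n$ (which holds by hypothesis on the entry length), giving $K(\omega) \geq |\omega| - C$.

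There is no substantial obstacle here; the argument is essentially baked into the definitions, and the only care required is bookkeeping the additive constants that arise from (i) the wrapper overhead and (ii) the gap between $K(\omega)$ and $|\omega|$. One should also verify that the wrapper behaves uniformly regardless of the contents of $\omega$, which it does because ``valid entry'' already guarantees termination of the simulated program. The same proof would go through for a variety of related contests, such as the Busy Beaver proof contest of Definition \ref{lnc3}, with only cosmetic changes.
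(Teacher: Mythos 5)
Your proposal is correct and is essentially the paper's own argument: both construct a wrapper program that runs the (shortest) description of $\omega$, simulates the resulting entry, and adds one to its output, then compare lengths to force $K(\omega)\geq n-C$. The only cosmetic differences are that you phrase it as a proof by contradiction rather than directly, and you track the constant $K(\sigma)\leq|\sigma|+C_1$ a bit more explicitly in the ``in particular'' clauses than the paper does.
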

Here, we recall that the \emph{Kolmogorov complexity} of a finite string $\omega$ is the length of the shortest program that computes $\omega$. A string is called \emph{$C$-incompressible} if its length is at most $C$ plus its Kolmogorov complexity.
\begin{proof}
Let $\alpha$ be the shortest program that computes $\omega$, and consider the algorithm $\beta=$`Let $\omega$ be the output of $\alpha$, then let $N$ be the output of $\omega$, then return $N+1$'. Then the length of $\beta$ is equal to the length of $\alpha$ plus some (small) constant $C$. Now, $\beta$ is a valid entry to the Busy Beaver contest whose score is greater than the score of $\omega$. If the length of $\beta$ is at most $n$, this contradicts the definition of $\omega$, so the length of $\beta$ is at least $n$. Thus, the length of $\alpha$ is at least $n - C$. Since $\alpha$ was arbitrary, this shows that the Kolmogorov complexity of $\omega$ is at least $n - C$. The remaining two assertions follow from the fact that the length of $\omega$ is at least its Kolmogorov complexity and at most $n$.
\end{proof}

Thus, the Busy Beaver Contest shares with the contests $\LNC(\ST)$ and $\LNC(\NT)$ the property that the highest scoring entry has close to the maximum allowed length. However, while Theorem \ref{theoremrecstrat} shows that there is an algorithm for computing nearly optimal entries to $\LNC(\ST)$ and $\LNC(\NT)$ as a function of the maximum allowed length, Theorem \ref{theoremBBCincompressible} shows that there is no such algorithm in the case of the Busy Beaver contest.

One might ask: even if there is no algorithm for computing the winner of the Busy Beaver contest, maybe we can still figure it out using some other method of deduction? Perhaps as the allowed length of the entries increases, we can spend more time thinking about how to determine what is the highest scoring entry. However, this idea seems to be based on a misunderstanding of the nature of deductive reasoning. If we specify an axiom system in which our reasoning can be encoded, then any entry we can prove is valid will be dominated by the axiomatic strategy corresponding to our axiom system, and is therefore not optimal.

\section{Contra the intersective definition}
\label{appendixintersectivedefinition}

It is worth taking some time to correct a common misunderstanding regarding the conceptual status of the notion of a category generated by production rules. This misunderstanding arises from the way in which this notion is formalized in the language of set theory. Namely, if $S$ is a set, then the subset of $S$ generated by a given list of production rules $R$ is defined to be the intersection of all subsets of $S$ that are closed under the production rules of $R$.\Footnote{A set is \emph{closed} under a list of production rules if it is not possible to add new members to the set by applying the production rules.} It seems to be a common belief that this ``intersective'' definition is somehow ``more rigorous'' or ``more precise'' than the intuitive notion of a category generated by production rules. Even worse, some people may even think that the intersective definition is \emph{conceptually accurate}, i.e. that it somehow describes \emph{what we really mean} when we talk intuitively about categories generated by production rules. A few points should suffice to counter this view:
\begin{itemize}
\item Regarding one definition being ``more precise'' than another, it appears that the only reason people think this is because the concept that the intersective definition depends on, i.e. the concept of a set, has a well-known standard axiomatization, whereas the natural axiomatizations (in various contexts) of the concept of a definition by production rules are less well-known. But there is nothing unrigorous about these other axiomatizations, nor are they at all arbitrary.
\item A serious threat to the conceptual accuracy of the intersective definition comes from the proof that the intersective definition is equivalent to the intuitive notion (in cases where the intersective definition applies). Namely, this proof depends on the conceptual validity of the intuitive notion in a crucial way: to show that every element of the set given by the intersective definition is in fact eventually generated by the production rules, one must use the fact that there is a \emph{set} consisting of all objects eventually generated by the production rules. (This set is a member of the collection of subsets being intersected, so it contains the set given by the intersective definition.) It seems to \me\ that most other proofs showing that two given notions are the same do not use the conceptual validity of the notions being proven equivalent in the same way (for example, the above proof does not appear to \me\ to depend on the conceptual validity of the intersective definition).
\item An obvious threat to the conceptual accuracy of the intersective definition is that it seems to be perfectly coherent to deny the validity of talking about sets at all while still accepting the legitimacy of talking about categories generated by production rules. In fact, all number realists seem to do this, since the category of natural numbers is ordinarily understood as being generated by production rules (namely $0$ and the successor operation).
\item On the other hand, even set realists appear to accept the validity of at least some recursive definitions that cannot be restated in terms of the intersective definition. Namely, many set realists accept the language of classical set theory as precise, even though the recursive definition of truth for classical set theory does not make sense according to an intersective concept of recursion.\Footnote{That is, unless we accept the notion of ``proper classes'' as legitimate and on a par with the notion of sets. But few set realists appear willing to do this, for reasons discussed in Section \ref{sectionsetrealism}.} While there may be good reasons for set realists to deny that the language of classical set theory is precise, it seems that the recursive nature of its notion of truth is not one of them.
\end{itemize}
Thus, \ego\ will take it as a given that definitions of categories in terms of production rules are precise, and \ego\ will not argue the point further.

\ignore{
\section{The theory of sets and classes}

A naive argument in favor of going farther is based on the theory of sets and classes, which we will denote by \SCT, also known as the second-order theory of sets. In this theory, one can quantify not only over sets but over ``classes'' of sets. The standard axiomatization of \SCT\ is \NBG, von Neumann--Bernays--Godel set theory.

To many mathematical logicians, \SCT\ may at first appear to have greater expressive power than \K(\ST). This is because the category of true \statements\ of \K(\ST)\Footnote{More precisely, the category of pairs $(\phi,\lv\xx=\aa\rv)$ where $\phi$ is a \statement\ of $\K(\ST)$ which is true relative to the variable assignment $\lv\xx=\aa\rv$.} is a subset of the category of sets that can be defined in terms of production rules, so it seems that the notion of truth in \K(\ST) can be expressed in \SCT\ via the ``intersective definition'' of categories defined by production rules (see Appendix \ref{appendixintersectivedefinition}). However, whether this is true or not depends on which of two possible semantics we give to the language \SCT.

{\bf Option 2:} to articulate a principled difference between the notion of a class and the notion of a set that justifies the conclusion that the language of set theory has different semantics when applied to classes than when applied to sets. One way to try to do this is to claim that the classes are ``logical'' objects while sets are ``combinatorial'' ones, see e.g. \cite{Reinhardt}. In other words, a class is just a ``logical set'', i.e. a description of a property abstracted away from the details of the description. But in order to talk about properties of sets, we need a notion of truth in \ST, and the need to define truth is exactly the reason why we felt the need to resort to classes in the first place. So it appears that this option is first of all incoherent, and in the end circles back to Option 1.

}

\ignore{
\section{Miscellaneous}

[Note: The category-creating operator in \GC\ is not quantifier-like, it is more powerful (because it includes a second-order free variable). Similarly for the standard definition of recursion. This means that the resulting languages are \emph{NOT} proposition/term lanugages.\internal]

[From after Principle \ref{principleGC}\internal]

From a set realist perspective, it is worth asking whether this principle justifies the conclusion that \ST\ is precise in the same way as for \NT. In order for this to be the case, the universe of sets would have to be generated by production rules, but the standard set realist view is that it is not possible to generate the universe of sets in this way. The same holds for the earlier phrasing of the intuition in terms of ``precisely delimited'' categories, since the universe of sets is not supposed to be delimited either. This is a hint that perhaps a strong view of the largeness or indescribability of the universe of sets should lead one to believe that universal quantification over the whole universe of sets does not make sense. We will explain this view further in Section \ref{sectionsetrealism}.

As a side remark, an interesting consequence of Principle \ref{principleGC} is that it implies that every ordering of \N\ either is or is not a well-ordering. Here, an \emph{ordering} of \N\ is a convention for labelling some members of \N\ as ``larger'' than others in a way that does not necessarily correspond to their magnitude, but nevertheless obeys certain natural algebraic rules. An ordering is called a \emph{well-ordering} if the following production rule completely generates \N: whenever all numbers labelled ``strictly less than'' a given number $k$ have been added to a set $S$, then add $k$ to $S$. For example, the standard ordering (i.e. calling ``larger'' whichever numbers are larger in terms of magnitude) is a well-ordering, since first $0$ would be added, then $1$, and so on. It appears difficult or impossible to justify the principle that every ordering either is or is not a well-ordering using only the special case of Principle \ref{principleGC} that occurs when $C=\N$, even for the case of computable orderings.

[From Set Realism\internal]

It is true that we can get some set theory out of the concept of lists by simply using the word `set' instead of `list' and `equal' instead of `equivalent'. This idea can be generalized to infinite sets by considering a description of a category of objects to be a ``set'', and considering two such descriptions to be ``equal'' if they identify the same objects. However, this view of set theory is very different from the classical view, since it implies that

The issue of whether collections are pre-existing objects or are defined in terms of more primitive objects such as lists is important for several reasons. First of all, according the classical view there exist sets which no description can ever be precise enough to uniquely identify.\Footnote{The standard heuristic used to support this claim is that we can uniquely identify only ``countably many'' sets, whereas there are ``uncountably many'' sets overall. There are many problems with this heuristic, in particular its implicit claim that it makes sense to talk simultaneously about everything that we will ever be able to uniquely identify, and its reliance on the standard view that the intuitive notion of ``size'' is best generalized to infinite sets by Cantor's embeddability criterion. The argument cannot be formalized in \ZFC\ due to \ZFC's inability to formalize the notion of an identifiable set, but many set realists see \ZFC\ as ``morally'' implying the existence of unidentifiable sets, in the sense that the intuitions that would lead one to accept \ZFC\ would also lead one to accept that there are undescribable sets.} If a set is nothing more than a description of a criterion or standard for membership, abstracted away from the details of the description (an entity which we will call a \emph{logical set}), then this classical view cannot be correct.

Secondly, the issue is relevant to the question of whether it makes sense to quantify over all sets, or even over all subsets of a given set. If sets are defined in terms of descriptions of criteria, then the question of what sets there are depends on what language the descriptions are supposed to be written in. But this means that we cannot precisely talk about sets until we specify a precise language for writing the descriptions. This conflicts with the classical view that it is possible to identify a set by giving a membership criterion that quantifies over all sets. Namely, applying this view to logical sets would yield the conclusion that the descriptions are supposed to be written in the language of set theory, but this would mean that we cannot precisely talk about sets until the language of set theory is precise, which will not happen until we can precisely talk about sets. This ``vicious circle'' is the reason that Henri Poincar\'e called classical definitions that quantify over all sets ``impredicative'' and therefore invalid.

...under the intuitive notion of a set.\Footnote{By contrast, if a set is conceived of as a description of a standard for membership, then the resolution of the paradox is that the standard of admitting as members ``all criteria that are not satisfied by themselves'' is not a very precise standard, because some criteria are not precise enough for there to be a well-defined answer to the question of whether or not they are satisfied by themselves. If we restrict our attention to those criteria that are precise enough, then the problem becomes the fact that the question of whether a criterion is precise enough or not is not itself a precise question. This reasoning will be formalized in Section \ref{sectionmetasq}.}

Let us consider one last criticism, not of the idea that \ST\ has precise semantics, but rather of the idea that \ST\ is capable of formalizing all mathematical thought. Namely, \ego\ claim that \emph{\ST\ has no conceptually accurate way of defining the set of natural numbers $\N$}. This may seem surprising to many mathematicians, since \ST\ can define the set of natural numbers as the intersection of all sets that contain $0$ and are closed under the successor operation $k\mapsto k + 1$. ($0$ and the successor operation are conventionally identified with the empty set and a certain definable operation on sets, respectively.) However, \ego\ argue that this definition does not adequately formalize the intuitive idea of ``the set that you get by starting with $0$ and applying the rule that whenever $k$ is in the set, you add $k+1$ to the set''. In other words, defining a set as an \emph{intersection} of other sets has a very different flavor from defining it in terms of \emph{production rules}. Ordinarily this incongruity is justified by saying that the two definitions both define the same set of natural numbers. This would be fine in most circumstances, but the issue here is that in order to prove that the two definitions both define the same set, \emph{the conceptual validity of the definition in terms of production rules is used in a crucial way}. Namely, the proof that the set defined in terms of intersection is contained in the set defined in terms of production rules proceeds via the observation that the set defined in terms of production rules \emph{exists as a set} (and in particular is a member of the class being intersected), and therefore contains the set defined in terms of intersection. It seems to \me\ that most other proofs showing that two given definitions define the same concept do not use the conceptual validity of the definitions being proven equivalent in the same way (for example, the above proof does not appear to us to depend on the conceptual validity of the definition in terms of intersection), so \ego\ say that \ST\ has no \emph{conceptually} accurate way of defining the set of natural numbers.\Footnote{For readers coming from mathematical logic, \ego\ should make clear that this criticism has nothing to do with the fact that \ST\ has no way of defining the natural numbers which is accurate in every model of \ZFC; the same is true for any axiomatization of the definition in terms of production rules. Our point is at the level of languages and not at the level of axiom systems.}

{\bf Option 4:} to point out that the new language is different from the old language in the sense that the new language has a predicate expressing the concept of being a member of the original universe, whereas the old language has no such predicate. Thus the fact that the brute force strategy of the new language yields a larger number than the brute force strategy of the old language is not necessarily a contradiction. The predicate expressing the concept of being a member of the original universe can be neatly written as $\LQ\in V_{\kappa_0}$', where $\kappa_0$ denotes the cardinality of the original universe (which is a member of the new univese but not of the old universe). But then we can apply the same logic to get a new predicate describing a universe of cardinality $\kappa_1 > \kappa_0$, and so on. Since we can continue to define larger and larger universes, it is not clear how this new picture differs from the gradualist picture except that there is a new primitive notion of ``being the cardinality of a universe''. Since it is not clear what the intuitive meaning of this primitive notion is, it is difficult to justify the claim that it can be the subject of a precise language, and thus it seems that this option circles back to Option 3.\\

{\bf Alternatives.} There may be other philosophically viable options as well, but they will have to be articulated by their supporters. It seems to \me\ that after ``pinning down'' clearly enough what any given philosophy believes, one will always end up with a Philosophical Largest Number Contest that can be formalized. This can only be illustrated via examples, and hopefully the above examples as well as the ones which will appear later in the paper constitute an at least somewhat plausible proof-of-principle.

}

\section{Glossary}
\label{sectionglossary}

As the subject matter of this paper has been the concepts of language, truth, reference, and preciseness, it has been particularly important to use precise language regarding these concepts. Below, I describe to some extent how I have been using various words.

\begin{itemize}
\item A \emph{language} is a convention specifying methods to be used for expressing and communicating thought. Languages can be either spoken or written (or they may use some other medium), but in this paper we mostly concentrate on written languages, which stipulate that thoughts are to be expressed by writing down strings. A \emph{string} consists of \emph{symbols} (marks which can be reliably distinguished from each other) arranged sequentially. Words, phrases, sentences, and mathematical formulas and expressions are all examples of strings.\Footnote{To guarantee that the symbols occurring in mathematical formulas have a canonical sequential arrangement, it may be best to think of them as being specified by a sequence of \LaTeX\ commands.}

This non-technical use of the word `language' should be contrasted with a different usage common in mathematical logic: the word `language' is often used to mean ``a set of strings''. While languages in the sense of this paper are usually associated with sets of strings of symbols, they are not fully described by them in the sense that a language always includes conventions for how it should be \emph{used} (and not merely mentioned). These conventions are typically of the form ``A certain mental concept should be expressed in the language in a certain way''. Obviously, the notion of a language in the sense of this paper cannot be made mathematically precise.

It is worth noting that although axiom systems are in some sense conventions for how a language should be used, a set of strings together with a formal axiom system still does not count as a language in the sense of this paper, since the conventions do not specify methods for communicating thought, but rather methods for manipulating symbols. On the other hand, if we start with a language and then add an axiom system, then the axiom system can sometimes be thought of as being a part of the language in the sense that it corresponds to the convention that the language should be interpreted in a way such that the axiom system is valid, i.e. we must determine whether or not any given way of interpreting the language is compatible with the axiom system, and reject those interpretations that are incompatible. Such a convention may increase the preciseness of a language which was previously imprecise, though it may also simply be an expression of intuition about concepts that were already precise.
\item Languages generally specify that their users should communicate thought by making \emph{utterances}, which can then be thought of as objects in their own right. In written languages, the utterances consist of strings of symbols. For the purposes of this paper we do not distinguish between ``complete'' utterances (such as sentences and mathematical formulae) and ``partial'' utterances (such as words, phrases, and mathematical expressions). We do, however, group complete and partial utterances into the following categories:
\begin{itemize}
\item A \emph{term}, \emph{name}, or \emph{representation} is the kind of utterance that \emph{refers to}, \emph{represents}, \emph{denotes}, or \emph{names} an \emph{object}, which is called its \emph{referent} or \emph{value}. In English, terms are generally singular noun phrases including determiners or proper nouns, such as `the largest number' or `Alice'. I do not consider plural nouns and noun phrases (such as `numbers' and `Alice and Bob') to be terms, nor singular nouns without determiners such as `number'. Mathematical terms are often called ``expressions'', but I will avoid this word both because it is often used to mean ``figure of speech'', and because it suggests a connection with the word `express', a connection which I find misleading.
\item A \emph{\statement} or \emph{claim} is the kind of utterance that can be \emph{true} or \emph{false}. The \emph{truth-value} of a claim is the answer to the question of whether the claim is true, if such an answer exists.
\item A \emph{predicate} is the kind of utterance that can be naturally combined with a term to form a \statement.
\item A \emph{definition} is the kind of utterance that \emph{defines} a term or class of terms in the sense of either specifying a new convention or describing a pre-existing convention for interpreting those terms. A definition is said to be \emph{conceptually valid} to the extent that it accurately describes a pre-existing convention.
\item A \emph{description} is the kind of utterance that \emph{describes}, or expresses thoughts about, an object. Descriptions may be more or less precise; particularly precise descriptions are said to \emph{identify} the object they describe. Terms are one kind of description but not the only kind. In particular, for a general description it does not make sense to talk about whether it refers to, represents, denotes, or names an object.

Note: any description can be turned into a name via the phrase `the [entity] described by [that description]'.
\item ``talk about'', ``call'', ``is called'', ``is said to be''

\end{itemize}
\item A \emph{notion} or \emph{concept} is a mental image or intuitive picture. Languages can \emph{express} concepts but cannot fully communicate them.
\item A \emph{category} is a mental ``box'' into which objects can be imaginarily placed. One can \emph{define} a category by specifying conventions regarding which objects should be placed in such a box, though this use of the word `define' should be carefully distinguished from the use of the word `define' for terms. For example, the ``category of natural numbers'' is defined by the convention that natural numbers should be placed in the box. An object which users of the language should place in the box (according to convention) is said to be a \emph{member} of the category.
\item A \emph{collection} is a category whose members are required to be members of a second ``reference'' category (such as the category of natural numbers).
\item The word `set' will be used in this paper in two different ways:
\begin{itemize}
\item (a) a set is a pre-existing mind-independent object which may contain other objects as members;
\item (b) a set is a collection (as defined above) which is defined precisely enough that every member of the reference category either is or is not a member of the collection.
\end{itemize}
\item A \emph{class} is a collection of sets according to usage (a), i.e. a collection whose reference category is the category of sets. This matches the standard meaning of the word `class' in mathematical logic.

Note that categories, collections, classes, and sets according to usage (b) are all mind-dependent objects, whereas sets according to usage (a) are mind-independent objects.
\item A person may \emph{translate} the utterances of one language into another language, by making new utterances which express the same concepts. In this case, the new utterance is called a \emph{translation} or \emph{transcription} of the original utterance, and is said to \emph{transcribe} and \emph{reproduce} (but not translate) the original utterance. Metonymically, the language of the new utterance is also said to transcribe and reproduce the language of the original utterance.

\item An utterance is \emph{precise} to the extent that it is incapable of being misinterpreted or reinterpreted by thinkers following the conventions of the language it is embedded in. This definition is somewhat circular since the answer to the question of whether thinkers are in fact following the conventions of a language depends on how well-defined or precise those conventions are. Nevertheless, a fundamental human drive is to clarify conventions in order to make them more precise, and preciseness can be understood as the extent to which we have succeeded at this goal.

The concept of preciseness is in general too vague to be used without an additional qualifier, such as a qualifier describing what purpose the utterance is precise enough for. Some of the qualified notions of preciseness we have used in this paper are `precise enough to have a definite truth-value' (also called \emph{definite}), `precise enough to apply induction', `precise enough that we can say exactly \emph{what it would mean} for the claim to be true' (also called simply \emph{precise}).
\item A \statement\ is called \emph{intelligible} or \emph{coherent} (or \emph{meaningful}?) if it is precise enough that we can say at least to some extent what it would mean for the \statement\ to be true, even if we cannot do so exactly.
\item A thinker \emph{justifies} a claim by describing a reasoning process that might lead a reader to reason in that way and thus \emph{endorse} the justification and consequently believe the claim.
\item A \emph{method} of reasoning is a strategy for reasoning, while a \emph{mode} of reasoning is only a way in which people reason, regardless of whether or not it follows a definite strategy.
\item A thinker \emph{gives}, \emph{writes}, \emph{formulates}, or \emph{states} a claim or chain of reasoning by uttering it, while he \emph{formalizes} or \emph{codifies} a claim or chain of reasoning by formulating a claim or chain of reasoning in a precise language which captures the intuitive meaning of the original claim or chain of reasoning. Claims and chains of reasoning in precise languages can be translated into other precise languages. However, we will not call this `formalization' because it is not making it more formal, it is preserving the degree of formality. Instead, we will call it \emph{reproduction} or \emph{transcription}.

\item A person \emph{gives}, \emph{writes}, or \emph{encodes} an \emph{algorithm} by describing it in sufficient detail that it could be implemented.
\item I follow the standard convention that if $\phi$ is a string, then the string that results from enclosing $\phi$ in single quotes (with whitespace padding if necessary) is a proper noun referring to $\phi$. For example, if we let $\phi = \LQ \text{red}$', then the string which would be printed as
\[
\LQ \text{red}\RQ
\]
(i.e. the string ` `red' ') is a proper noun referring to the word `red'. So for example, `the word `red' has three letters' is a grammatically valid sentence.

If the string $\phi$ contains a substring $\psi$ which is a term referring to a string $\theta$, then I may modify the above convention to stipulate that the string ` `$\phi$' ', instead of referring to $\phi$, refers to the string that results from replacing $\psi$ with $\theta$ in $\phi$. For example, if we let $\psi = \LQ \text{red}$' and $\phi = \LQ \psi\;\psi$', then the string ` `$\psi\;\psi$' ' may refer to the string `red red' instead of to the string `$\psi \;\psi$'. This abuse of notation can hopefully be determined easily from context. Note that I have used it already once in this paragraph.

I use double quotes much more loosely and do not follow any precise convention regarding them.

\item A language is \emph{classical} if it follows the conventions of classical predicate calculus. We take one of these conventions to be the assumption that every syntactically valid \statement\ of the language is either true or false. A language can be classical even if this assumption is not valid, as long as making the assumption is a requirement for using the language.
\item A language is \emph{proposition/term} if it follows the conventions described in Appendix \ref{appendixPT}. Every classical language is proposition/term, but not conversely.
\item A proposition/term language is \emph{selfmeta} if it is capable of transcribing its own metalanguage (see \6\ref{subappendixselfmeta}).
\item A \emph{program} is a description of an algorithm written in a programming language.
\end{itemize}

\ignore{
TO DO list:
\begin{itemize}
\item Relative countability is much weaker than relative expressibility... we can talk about inaccessible, Mahlo etc. \emph{cardinals} (not ordinals) in a descriptionalist sense as long as we relativize it to some fixed ordinal... this may allow a proof that ZF is consistent using descriptivist methods...
\item Talk about ``reflective consistency''
\item Epistemological status of $V=L$ and ``every set is countable'' -- both of them are sharpening of concepts towards the descriptionalist view
\item The sharpening $V=L$ implies CH (even in IKP) but does not seem to ``contradict what the notion of a set is about'', contrary to Feferman16
\item $\phi$ and $\psi$ are terms, while $\alpha$ is an algorithm and $\rho$ is a proof. $P$ and $Q$ are \statements, $A$ is an axiom system, and $X$ is a school of thought.
\item ``standard'' vs. ``criterion'' -- essentially the same but linguistically a little different
\item ``optimal'' vs. ``best'': optimality is always a specific claim, while saying that a strategy is the ``best'' is merely a way of recommending it (according to emotivism).
\item to say that an algorithm \emph{returns} a value is a weaker claim than to say that it \emph{computes} the value; computation must be intentional.
\item humans express thought through languages
\item a \emph{strategy} is a procedure for creating an entry
\item a \emph{principle} is a description of a method of reasoning.
\item ``sound'' vs ``valid'': do we distinguish these well?
\end{itemize}
\begin{itemize}
\item Surprising theorem in \GST: any fact true at an inaccessible cardinal is true in the larger universe. (The converse cannot be formulated but the existence of counterexamples can and is true.)
\item An interesting ``maximality'' hypothesis: There exists a set of \statements\ in \GST, all of them true, which cannot be consistently extended to a larger set. Here we have to be careful what we mean by the word `consistently'.
\item note: part of the liar's paradox analysis is due to Dmitri Bochvar (Bochvar's internal logic)
\item however, our logic is crucially different in that it can't be described by a truth table
\item ``logical implication'' vs ``implication via unproblematic reasoning'' (the notion of unproblematic reasoning is problematically imprecise!) vs material implication
\item an important ``escape hatch'': the \emph{preciseness} of the concept ``reasoning whose soundness is grounded the soundness of $A$ together with the soundness of sound reasoning'', where $A$ is an axiom system
\item Note: There \emph{are} precise notions of precision capable of encompassing full mathematical ontologies. However, the natural-language concept of precision is not itself a precise concept, as is the case for most natural-language concepts.
\item Note: It makes no sense to attempt to define a language using arguments that purport to show that it is plausible that it is definable! (e.g. abductive and inductive reasoning)
\item New axiom: Any physically \emph{instantiated} (not instantiatable) reasoning is sound. So if reasoning will be instantiated, then it will be true, but that does not imply that it is true. This requires the introduction of a ``temporal'' truth predicate. (In a separate document, maybe?)
\item Introduce ordinals where appropriate. Describe in detail the notion of encoding ordinals and using ordinalhood (or ``well-foundedness'') as a predicate.
\item Introduce ``the iterative conception'' for (hyper)predicativism
\item Introduce the concepts of collections and sets where appropriate. (A set is a definite collection.) [NOTE: I would rather define a collection to be a string of the form ``The collection of all $x$ such that $\phi(x)$'' or ``$\{x:\phi(x)\}$'' rather than the bare formula $\phi(x)$.]
\item The old document talked a lot about ``intuition/introspection'' and the question of when/whether it is valid, as well as the notion of making ``philosophical commitments''. It would be nice to add this kind of language where appropriate.
\item Similarly, we should add language about ``the question of which questions are precise / have definite answers is not precise / does not have a definite answer''.
\item Similarly with Lewis Carroll's ``What the Tortoise said to Achilles''.
\item In particular, we should be clear about the strategy used in this paper: make some arguments, then use introspection to figure out why we think those arguments are valid, then make arguments based on those new premises, then repeat.
\item Also emphasize that no axiom system can cover all reasoning of a given school of thought (since then ``this axiom system is valid'' would also be acceptable)
\item Move the discussion of Kolmogorov complexity to a more prominent location.
\item Add a discussion about the $\omega$-conservativity LNC and why the contest is the same as the LNC of $\reclang$
\item Add a discussion about the formalizability of mathematical intuitions. Major point: This paper has succeeded at formalizing many of our intuitions.
\item Play the Philosophical Largest Number Contest for constructivists (using the ideas from the Indefinite Languages section).
\item Add the Conclusion: Four possible maximal ontological frameworks, non-classical, linearly ordered. There are no maximal axiom systems, and the question ``what is the smallest impredicative ordinal?'' does not make sense. Each framework can attempt to comment on what sort of axiom systems the framework below it should use, but any axiom system that tries to transfer philosophical intuitions too directly will be rejected by the adherents of the smaller framework.
\item Can \ZFC\ yield larger numbers than \ZF? Maybe conservativity results are relevant?
\item Primitive recursive algorithms yield a degenerate LNC
\item Analysis of set realists who accept truth as a fundamental notion but deny the existence of classes
\item Pitfalls of the axiomatic strategy: possibility of losing ``by coincidence''; why meta-ing is better than choosing a large constant
\item Criticism of set realists (in particular ``one step back from disaster''): fails to engage Russell's paradox seriously as a philosophical argument (cf. writeup for LessWrong)
\item Citations to philosophical and mathematical literature
\item Clarification: should $L^{(1)}$ be written in \NT\ or \ST?
\item Version of Theorem \ref{theoremrecstrat} for non-classical logic?
\item Add remark: ``Godelization'' requires analyzing the philosophical underpinnings behind the reasoning that you use.
\item Add material from Largest\_number\_addendum.tex
\item Any definite \statement\ in $\reclang$ is equivalent to a \statement\ from a language coming from an ordinal
\item Any ordinal is equivalent to a computable ordinal?

\end{itemize}

}

\bibliographystyle{amsplain}

\bibliography{bibliography}

\end{document}